\newtheorem{theorem}{Theorem}[section]
\newtheorem{lemma}[theorem]{Lemma}
\newtheorem{proposition}[theorem]{Proposition}
\newtheorem{corollary}[theorem]{Corollary}
\theoremstyle{definition}
\newtheorem{definition}[theorem]{Definition}
\newtheorem{assumption}[theorem]{Assumption}
\theoremstyle{remark}
\newtheorem{remark}[theorem]{Remark}
\numberwithin{equation}{section}
\begin{document}
\setcounter{page}{1}

\title[Spectral inequalities on compact manifolds]{Spectral inequalities for elliptic pseudo-differential operators on closed manifolds}

  \author[D. Cardona]{Duv\'an Cardona}
\address{
  Duv\'an Cardona:
  \endgraf
  Department of Mathematics: Analysis, Logic and Discrete Mathematics
   \endgraf
  Ghent University.
  \endgraf
  Krijgslaan 281, Building S8, 9000 Ghent, Belgium.
  \endgraf
  {\it E-mail address} {\rm duvanc306@gmail.com, duvan.cardonasanchez@ugent.be}
  }

\thanks{This project has received funding from the European Research Council (ERC) under the European Union’s Horizon 2020 research and innovation programme (grant agreement NO: 694126-DyCon).   This work also has been supported by  the FWO  Odysseus  1  grant  G.0H94.18N:  Analysis  and  Partial Differential Equations and by the Methusalem programme of the Ghent University Special Research Fund (BOF)
(Grant number 01M01021).
}
\keywords{Pseudo-differential operator, Null-controllability, Fractional diffusion model, Microlocal Analysis, Spectral Inequality}
     \subjclass[2020]{42B20, 42B37}

\begin{abstract} Let  $(M,g)$ be a closed Riemannian manifold. The aim of this work is to prove  the Lebeau-Robbiano spectral inequality for a positive elliptic pseudo-differential operator $E(x,D)$ on $M,$ of order $\nu>0,$  in the H\"ormander class $\Psi^\nu_{\rho,\delta}(M).$  In control theory this has been an open problem prior to this work.   As an application of this fundamental result, we establish the null-controllability of the (fractional) heat equation associated with  $E(x,D).$ The  sensor  $\omega\subset M$ in the observability inequality is an open subset of $M.$ The obtained results (that are, the corresponding spectral inequality for an elliptic operator and the null-controllability  for  its diffusion model) extend in the setting of closed manifolds,  classical results of the control theory, as the spectral inequality due  to Lebeau and Robbiano and their result on the  null-controllability of the heat equation giving a complete picture of the subject in the setting of closed manifolds. As a consequence of our analysis we prove the Donnelly-Fefferman  inequality for positive pseudo-differential operators for sums of eigenfunctions.   For the proof of the spectral inequality we introduce a periodization approach in time inspired by the global pseudo-differential calculus due to Ruzhansky and Turunen.
\end{abstract} 
\maketitle
\tableofcontents
\allowdisplaybreaks
\section{Introduction}

In 1995  Labeau and Robbiano \cite{LabeauRobbiano1995} proved the null-controllability of the heat equation for the Laplacian on a compact manifold (with or without smooth boundary). An instrumental tool in the proof of Labeu-Robbiano's result
 is a spectral inequality for the wave packets of eigenfunctions of the Laplacian originated from  \cite{LabeauRobbiano1995,JerisonLabeau}. In the case of a single eigenfuction $\rho_j,$ the Lebeau-Robbiano spectral inequality implies the doubling property proved by Donnelly and Fefferman \cite{DonnellyFefferman}. The spectral inequality from \cite{LabeauRobbiano1995,JerisonLabeau} has been used in many areas of analysis, for instance, in the aforementioned result of the control theory, or as in \cite{DonnellyFefferman} in the geometric analysis of  the Hausdorff measure of nodal sets.  In this work we extend such a spectral inequality to the case of a positive elliptic pseudo-differential operator $E(x,D)$ of arbitrary  order $\nu>0$ and we apply this spectral  inequality to the null-controllability of the heat equation (and also of the fractional heat equation) associated to $E(x,D).$ 
 
 The analysis here includes the family of elliptic differential operators of arbitrary order with smooth coefficients  where the main obstruction in proving the Lebeau-Robbiano spectral inequality is the lack of Carleman estimates. Indeed, in the high-dimensional case,  the so-called H\"ormander subellipticity
condition does not hold, which is typical for product operators. Our approach compensates it with the use of the calculus of (possibly non-local) pseudo-differential operators on compact manifolds due to H\"ormander \cite{Hormander1985III}. Moreover, we prove that any positive elliptic pseudo-differential operator of positive order in the standard $(\rho,\delta)$-H\"ormander class on a closed manifold $M$ satisfies the Lebeau-Robbiano spectral inequality.

In general pseudo-differential operators are non-local and the use of Carleman estimates, which is the analytical tool by excellence in the proof of the Lebeau-Robbiano spectral inequality and their subsequent generalisations, are not valid. This work introduces a new approach in the proof of these spectral inequalities for  elliptic differential operators, embedding this problem in the general setting  of the pseudo-differential calculus due to H\"ormander where we exploit e.g. the Calder\'on-Vaillancourt theorem and other microlocal properties of the pseudo-differential calculus.

The validity of the Lebeau-Robbiano spectral inequality for an elliptic pseudo-differential operator implies the null-controllability for its corresponding diffusion model. This is a very well known consequence  of the equivalence between the null-controllability of a linear model and its corresponding observability inequality due to the Hilbert Uniqueness Method developed by J. -L. Lions \cite{JLLions}. The null-controllability for the diffusion model associated to the operator $E(x,D)^\alpha$ as well as its corresponding spectral inequality are presented in Theorem \ref{Main:theorem} and in Theorem \ref{The:Spectral:Inequality} below. 
\subsection{Outline and main results} To discuss the contributions of this work we precise some notations.  
Let $(M,g)$ be a compact Riemannian manifold without boundary and of dimension $n$.  Let $\omega$ be a non-empty open subset of $M$ and let $1_\omega$ be its characteristic function. For any $\nu>0,$ let $E(x,D)$ be a positive elliptic pseudo-differential operator of order $\nu$ in the H\"ormander class $\Psi^\nu_{\rho,\delta}(M),$ see  \cite{Hormander1985III}. The conditions $\rho\geq 1-\delta$ and  $0\leq \delta<\rho\leq 1,$ imply that  the class $\Psi^\nu_{\rho,\delta}(M)$ is invariant under changes of coordinates and in particular, for $\nu\in 2\mathbb{N},$ the family of elliptic operators in the class $\Psi^\nu_{1,0}(M)$ contains any positive elliptic differential operator with smooth coefficients of order $\nu=2m.$ One of the goals of this work is to establish the null controllability for the model of fractional diffusion
\begin{equation}\label{Main}
\begin{cases}u_t(x,t)+ E(x,D)^\alpha u(x,t)=g(x,t)\cdot 1_\omega (x) ,& (x,t)\in M\times (0,T),
\\u(0,x)=u_0,\end{cases}
\end{equation} at any time $T>0,$ that is, to investigate the conditions on  the pair $(\alpha,\nu),$ in order that for any initial condition $u_0,$ there is an input function $g\in L^2(M\times (0,T))$ such that the solution to \eqref{Main} vanishes in time $T,$ that is $u(x,T)=0,$ $x\in M.$ 

Going to the historical aspects of the control theory for the heat equation, in the context of compact manifolds, it started with the aforementioned result  by Lebeau and Robbiano \cite{LabeauRobbiano1995} where they proved the null-controllability of \eqref{Main} in the case of the Laplacian $E(x,D)=-\Delta_g,$ and for   $\alpha=1$ (even if the manifold has a smooth non-empty boundary).  Then the general result by Miller \cite{Miller2006} proved in the setting of separable Hilbert spaces allows to use the spectral inequality in \cite{LabeauRobbiano1995} to get the same result for $E(x,D)=(-\Delta_g)^{\alpha}$ for all $\alpha>1/2.$ Moreover, the sharpness of the inequality $\alpha>1/2$ was analysed in  \cite{Lu2013}. 

One of the main tools in proving the null-controllability result in the case of the Laplacian is  the following spectral  inequality for wave packets of eigenfunctions, see Jerison and Lebeau \cite{JerisonLabeau},  and \cite{LabeauRobbiano1995,LebeauLebeau1998}.
\begin{theorem}
    Let $M$ be a compact Riemmanian manifold with (or without) smooth boundary $\partial M.$ Let $(\rho_j,\lambda_j^2)$ be the corresponding spectral data of the Laplacian $-\Delta_g,$ determined by the eigenvalue problem $-\Delta_g\rho_j=\lambda_j^2\rho_j.$ 
    
    Then, for any non-empty open subset $\omega\subset M,$ we have the loss of orthogonality estimate
\begin{equation}\label{Observation:Estimte}
    \Vert \varkappa\Vert_{L^2(M)}\leq C_1e^{C_2 {\lambda}}\Vert \varkappa\Vert_{L^2(\omega)},\,\,\,\varkappa\in \textnormal{span}\{\rho_j:\lambda_j\leq \lambda\}.
\end{equation}Moreover, the growth constant $e^{C_2 {\lambda}}$ is sharp.
\end{theorem}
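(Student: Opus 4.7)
The plan is to deduce \eqref{Observation:Estimte} from an elliptic extension by one extra variable, combined with a local Carleman estimate and propagation of smallness, following the Jerison--Lebeau--Robbiano scheme. On the cylinder $Z:=M\times(-1,1)$ I would consider the elliptic operator $P:=-\partial_s^2-\Delta_g$ and, given $\varkappa=\sum_{\lambda_j\leq \lambda}c_j\rho_j$, form its explicit ``harmonic'' extension
\begin{equation*}
    u(x,s) := \sum_{\lambda_j\leq \lambda} c_j\,\frac{\sinh(\lambda_j s)}{\lambda_j}\,\rho_j(x),
\end{equation*}
which satisfies $Pu=0$ on $Z$, $u(\cdot,0)=0$ and $\partial_s u(\cdot,0)=\varkappa$. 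Parseval together with $|\sinh(\lambda_j s)|\leq e^{\lambda_j|s|}$ yields the a priori control $\|u\|_{H^1(M\times(-r,r))}\leq C\,e^{r\lambda}\|\varkappa\|_{L^2(M)}$; conversely, since $u(\cdot,0)=0$, Taylor expansion in $s$ yields $\|u\|_{H^1(\omega\times(-r,r))}\leq C_r\,\|\varkappa\|_{L^2(\omega)}$ for small~$r$.

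Next I would combine these bounds with a local Carleman estimate for $P$ centred at points of $\omega\times\{0\}$. Working in coordinate patches on $Z$ and choosing a smooth weight $\varphi$ satisfying H\"ormander's pseudoconvexity condition for the symbol of $P$, the standard estimate
\begin{equation*}
    \tau\int e^{2\tau\varphi}\bigl(|\nabla_{x,s} v|^2+\tau^2 v^2\bigr)\,dV \;\leq\; C\int e^{2\tau\varphi}|Pv|^2\,dV, \qquad \tau\geq \tau_0,
\end{equation*}
applied to $v=\chi u$ for suitable cutoffs~$\chi$, gives, via the usual cutoff/commutator bookkeeping, a Hadamard three-ball inequality
\begin{equation*}
    \|u\|_{H^1(B_2)} \leq C\,\|u\|_{H^1(B_1)}^{\theta}\,\|u\|_{H^1(B_3)}^{1-\theta},
\end{equation*}
valid for concentric balls $B_1\Subset B_2\Subset B_3\subset Z$ and some $\theta\in(0,1)$ depending only on the geometry.

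Then I would propagate smallness from the neighbourhood $\omega\times(-r,r)$ of the sensor region to the whole slab $M\times\{0\}$ by covering $M$ with a finite chain of overlapping balls and iterating the three-ball inequality. Combining the resulting interpolation
\begin{equation*}
    \|u\|_{H^1(M\times(-r',r'))} \leq C\,\|u\|_{H^1(\omega\times(-r,r))}^{\theta^N}\,\|u\|_{H^1(M\times(-r,r))}^{1-\theta^N}
\end{equation*}
with the two a priori bounds of the first step, optimising the cutoff parameters against~$\lambda$ and using a trace argument to pass from $u$ to $\partial_s u|_{s=0}$, one recovers \eqref{Observation:Estimte} with exponential constant $e^{C_2\lambda}$. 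Sharpness follows from standard Gaussian-beam quasi-mode constructions concentrating along a geodesic avoiding~$\omega$.

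The principal difficulty is the construction and verification of a Carleman weight on $Z$ whose level sets are pseudoconvex with respect to the symbol of $P$: this condition is genuinely local and must be stitched through coordinate charts. A second, more delicate, issue is the quantitative tracking of the iterated exponent $\theta^N$ as a function of both $N$ and the spectral parameter~$\lambda$: a careless optimisation only yields a bound of the form $e^{C_2\lambda^a}$ with $a>1$, insufficient for the Lebeau--Robbiano method, and obtaining the correct exponent~$1$ requires balancing the optimisation of $\tau$ in the Carleman estimate against the a priori growth $e^{r\lambda}$ of the elliptic extension.
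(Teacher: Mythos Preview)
Your proposal follows the classical Jerison--Lebeau--Robbiano scheme, which is indeed how this result is proved in the references the paper cites; the paper itself does not give an independent proof of this statement but treats it as background. The paper's proof of its generalisation (Theorem~\ref{The:Spectral:Inequality}) uses exactly the same $\sinh$-extension $F=u$ and then invokes the interpolation inequality as a black box (Lemma~\ref{CarlemanInequality}, quoted from Lebeau--Zuazua and Le~Rousseau--Lebeau), so at the level of strategy you and the paper are aligned; you simply go one layer deeper and sketch where that black box comes from.

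There is, however, one genuine gap in your sketch. The claim that ``Taylor expansion in $s$ yields $\|u\|_{H^1(\omega\times(-r,r))}\leq C_r\,\|\varkappa\|_{L^2(\omega)}$'' is not correct with $C_r$ independent of the spectral profile of~$\varkappa$. The map $\varkappa\mapsto u(\cdot,s)$ multiplies each coefficient $c_j$ by $\sinh(\lambda_j s)/\lambda_j$, and since the $\rho_j$ are \emph{not} orthogonal on~$\omega$, there is no a~priori bound of $\|u(\cdot,s)\|_{L^2(\omega)}$ by $\|\varkappa\|_{L^2(\omega)}$: one can arrange $\varkappa|_\omega$ to be small by cancellation between two eigenfunctions while $u(\cdot,s)|_\omega$ remains large because the weights $\sinh(\lambda_j s)/\lambda_j$ destroy that cancellation. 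The standard remedy --- and what Lemma~\ref{CarlemanInequality} actually records --- is to run a Carleman estimate \emph{up to the hypersurface} $\{s=0\}$, so that the ``small data'' feeding the three-ball/propagation chain is the Cauchy trace $(u,\partial_s u)|_{s=0}=(0,\varkappa)$ restricted to~$\omega$, not a volume $H^1$-norm near it. With that correction the rest of your outline is fine, and your worry about the exponent then evaporates: once the interpolation inequality reads $\|F\|_{H^1(M\times(\alpha,T-\alpha))}\le C\|F\|_{H^1(M_T)}^{\kappa}\|\varkappa\|_{L^2(\omega)}^{1-\kappa}$, substituting the crude bounds $\|F\|_{H^1(M_T)}\lesssim e^{T\lambda}\|\varkappa\|_{L^2(M)}$ and $\|F\|_{H^1(M\times(\alpha,T-\alpha))}\gtrsim\|\varkappa\|_{L^2(M)}$ gives $e^{C\lambda}$ directly, with no further optimisation of $\tau$ or~$r$ needed.
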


Observation estimates of the type \eqref{Observation:Estimte} have many applications in control theory and in geometric analysis. For instance, in control theory only to mention a few, it was used in the aforementioned result of null-controllability of the heat equation \cite{LabeauRobbiano1995}, in the  null-controllability of the thermoelasticity system \cite{LebeauLebeau1998}, the
null-controllability of the thermoelastic plate system \cite{BenabdallahNaso2002, Miller2007},  the null-controllability for the elasticity model of the bi-Laplacian $\Delta_g^2$ (endowed with ``clampled'' boundary conditions) \cite{RousseauRobbiano2020}, and the null-controllability of some systems of parabolic PDE \cite{Leautaud2010}. On the other hand, in geometric analysis it can be used to estimate the Hausdorff measure of   the nodal set of finite sums of eigenfunctions of $-\Delta_g,$ when the manifold is  analytic, see \cite{Lin1991,JerisonLabeau} generalizing a result by Donnelly and Fefferman \cite{DonnellyFefferman}. The Lebeau-Robbiano spectral inequality has shown to be valid even when $\omega$ is a measurable set and satisfies some additional geometric conditions, see e.g. \cite{Apraiz} and references therein.

Let us consider a positive elliptic pseudo-differential operator $A\in \Psi^\nu_{\rho,\delta}(M)$ with positive order $\nu>0$ on a compact Riemannian manifold $(M,g).$  The  sequence $\{(\mu_j=\lambda_j^\nu,\rho_j):j\in \mathbb{N}\}$ determines the spectral data of the operator $E(x,D)$ that are determined by the eigenvalue problem: \begin{center}
      $E(x,D)\rho_j=\lambda_j^\nu \rho_j,$ $0\leq \lambda_j\leq\lambda_{j+1}\rightarrow \infty.$ 
  \end{center} 
The contributions of this work start with the following  null-controllability statement  of the fractional heat equation for elliptic operators on compact manifolds.  Here, $0\leq \delta<\rho\leq 1,$ and  $\rho\geq 1-\delta,$ are the required conditions in order that the classes $\Psi^\nu_{\rho,\delta}(M)$ became invariant under changes of local coordinate systems.
\begin{theorem}\label{Main:theorem} Let $E(x,D)\in \Psi^\nu_{\rho,\delta}(M)$ be a positive elliptic pseudo-differential operator of order $\nu>0.$ Let $(x,\xi)\in T^*M,$ and assume that for any $\xi\neq 0,$ $E(x,\xi)>0$ is strictly positive.   Then,
for any $\alpha>1/\nu,$  the fractional diffusion problem in \eqref{Main} is null-controllable at any time $T>0.$
\end{theorem}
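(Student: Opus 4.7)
My plan is to reduce the claim to the spectral inequality for $E(x,D)$ stated in Theorem \ref{The:Spectral:Inequality} below, and then to apply a now-classical abstract principle of Miller \cite{Miller2006} together with the Hilbert Uniqueness Method of J.-L. Lions \cite{JLLions} to upgrade that spectral inequality into null-controllability of the fractional diffusion \eqref{Main}. The first step is functional-analytic: since $E(x,D)\in \Psi^{\nu}_{\rho,\delta}(M)$ is positive and elliptic of order $\nu>0$ on a closed manifold, standard results of microlocal analysis provide a unique self-adjoint realization on $L^2(M)$ with domain $H^\nu(M)$, compact resolvent, and a discrete spectrum $\mu_j=\lambda_j^\nu\to +\infty$ with $L^2$-orthonormal eigenfunctions $\{\rho_j\}$. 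The spectral theorem then defines $E(x,D)^\alpha$ as a non-negative self-adjoint operator with the same eigenfunctions and eigenvalues $\lambda_j^{\nu\alpha}$, and the semigroup $\{e^{-tE(x,D)^\alpha}\}_{t\geq 0}$ is analytic on $L^2(M)$.

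Next, I would invoke Theorem \ref{The:Spectral:Inequality} which asserts
\[
\|\varkappa\|_{L^2(M)}\leq C_1 e^{C_2 \lambda}\|\varkappa\|_{L^2(\omega)},\quad \varkappa\in\operatorname{span}\{\rho_j:\lambda_j\leq \lambda\},
\]
and reindex it in terms of the spectral projector $\Pi_\Lambda$ of the operator $B:=E(x,D)^\alpha$ onto its eigenspaces with eigenvalue $\leq \Lambda$. The condition $\lambda_j^{\nu\alpha}\leq \Lambda$ is equivalent to $\lambda_j\leq \Lambda^{1/(\nu\alpha)}$, so the spectral inequality transforms into
\[
\|\Pi_\Lambda u\|_{L^2(M)}\leq C_1\exp\!\bigl(C_2\,\Lambda^{\kappa}\bigr)\|\Pi_\Lambda u\|_{L^2(\omega)},\qquad \kappa=\frac{1}{\nu\alpha},
\]
valid for all $u\in L^2(M)$. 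The hypothesis $\alpha>1/\nu$ ensures that the fractional exponent $\kappa<1$, which is the sub-unit growth threshold required by the Lebeau-Robbiano abstract scheme.

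In the third step, I would apply Miller's abstract theorem from \cite{Miller2006} (which axiomatises the Lebeau-Robbiano iterative construction in a separable Hilbert space) to the pair $(B,\mathbf{1}_\omega)$. Since $B$ is a non-negative self-adjoint operator with compact resolvent and since the spectral inequality holds with exponent $\kappa<1$, the theorem yields the final-state observability inequality
\[
\|e^{-TB}u_0\|_{L^2(M)}^2\leq C_T\int_0^T \|\mathbf{1}_\omega e^{-(T-t)B}u_0\|_{L^2(M)}^2\,dt
\]
at any time $T>0$ and for every $u_0\in L^2(M)$. Finally, by the HUM duality between observability of the free evolution and null-controllability of the controlled evolution \eqref{Main}, this observability inequality is equivalent to the existence of a control $g\in L^2(M\times (0,T))$ driving the solution of \eqref{Main} to zero at time $T$, which is exactly the statement of Theorem \ref{Main:theorem}.

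The main obstacle is concentrated upstream, in Theorem \ref{The:Spectral:Inequality}: once the spectral inequality for an arbitrary positive elliptic $E(x,D)\in \Psi^\nu_{\rho,\delta}(M)$ is established (which is the genuinely new analytic content of the paper, relying on the periodization-in-time approach and the Calder\'on--Vaillancourt boundedness of the H\"ormander calculus), the deduction of null-controllability is a clean application of Miller's criterion and HUM; the only genuine check is that the growth exponent $1/(\nu\alpha)$ obtained by rescaling the spectral inequality is strictly less than one, which is precisely guaranteed by the hypothesis $\alpha>1/\nu$ and explains the sharp threshold in the theorem.
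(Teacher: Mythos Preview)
Your proposal is correct and follows essentially the same route as the paper: both reduce Theorem~\ref{Main:theorem} to the spectral inequality of Theorem~\ref{The:Spectral:Inequality} and then invoke Miller's abstract criterion \cite{Miller2006}, the key computation being that the spectral inequality for $E(x,D)^{1/\nu}$ rewrites as one for $B=E(x,D)^\alpha$ with growth exponent $\kappa=1/(\nu\alpha)<1$ precisely when $\alpha>1/\nu$. The paper phrases this via the parameter $\gamma=1/(\nu\alpha)$ in Miller's Theorem~\ref{Miller:Theorem} (so that $\mathcal{A}^\gamma=E(x,D)^{1/\nu}$), which is exactly your reindexing; your explicit mention of the self-adjoint realization and HUM duality is standard background that the paper absorbs into its preliminaries.
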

As a consequence of Theorem \ref{Main:theorem} we obtain the null-controllability of the fractional diffusion model \eqref{Main} for a class of elliptic differential operators of arbitrary (even) order and with smooth coefficients.
 \begin{corollary}Let $\nu\in 2\mathbb{N},$ and let $E(x,D)$ be a positive elliptic differential operator of order $\nu=2m$ on $M$ with smooth coefficients. Let $(x,\xi)\in T^*M,$ and assume that for any $\xi\neq 0,$ $E(x,\xi)>0$ is strictly positive.  Then,
for any $\alpha>1/\nu,$  the fractional diffusion problem in \eqref{Main} is null-controllable at any time $T>0.$ 
 \end{corollary}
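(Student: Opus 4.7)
The plan is to show that the corollary is an immediate specialization of Theorem \ref{Main:theorem}, the only content being the verification that the hypotheses on $E(x,D)$ as a differential operator place it within the pseudo-differential framework of that theorem.

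First, I would recall that any linear differential operator of order $\nu=2m$ with smooth coefficients on $M$ can be written in local coordinates as $E(x,D)=\sum_{|\alpha|\leq \nu}a_\alpha(x)D^{\alpha}$ with $a_\alpha\in C^{\infty}$, and its full symbol $E(x,\xi)=\sum_{|\alpha|\leq \nu}a_\alpha(x)\xi^{\alpha}$ satisfies the estimates $|\partial_\xi^{\beta}\partial_x^{\gamma}E(x,\xi)|\leq C_{\beta,\gamma}(1+|\xi|)^{\nu-|\beta|}$ uniformly on compact sets in $x$. This is precisely the $(\rho,\delta)=(1,0)$ H\"ormander condition, so $E(x,D)\in \Psi^{\nu}_{1,0}(M)$. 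The parameters $\rho=1$, $\delta=0$ trivially satisfy both $0\leq \delta<\rho\leq 1$ and $\rho\geq 1-\delta$, hence $\Psi^{\nu}_{1,0}(M)$ is a well-defined, coordinate-invariant subclass of $\Psi^{\nu}_{\rho,\delta}(M)$.

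Next, I would check that the remaining hypotheses of Theorem \ref{Main:theorem} hold. Ellipticity in the classical sense for a differential operator means that the principal symbol $E_{\nu}(x,\xi)=\sum_{|\alpha|=\nu}a_{\alpha}(x)\xi^{\alpha}$ does not vanish for $\xi\neq 0$; together with the positivity assumption $E(x,\xi)>0$ for all $(x,\xi)\in T^*M$ with $\xi\neq 0$, this supplies both the ellipticity and the strict positivity of the (full) symbol required in the hypothesis of Theorem \ref{Main:theorem}. The positivity of $E(x,D)$ as an operator (needed to define the fractional powers $E(x,D)^{\alpha}$ via the spectral theorem, and to ensure a discrete spectrum $\lambda_j^{\nu}\to \infty$) is likewise inherited from the positivity statement in the corollary.

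With these verifications, Theorem \ref{Main:theorem} applies verbatim to $E(x,D)$: for any $\alpha>1/\nu$, the fractional diffusion problem \eqref{Main} is null-controllable at any time $T>0$. The only mildly non-routine point, if any, is to observe that the invariance of $\Psi^{\nu}_{1,0}(M)$ under change of coordinates, which is the whole reason for the conditions $0\leq \delta<\rho\leq 1$ and $\rho\geq 1-\delta$, is automatic in the classical case $(\rho,\delta)=(1,0)$; no further work beyond this identification is required.
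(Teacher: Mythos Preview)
Your proposal is correct and matches the paper's approach exactly: the paper simply states the corollary as an immediate consequence of Theorem \ref{Main:theorem}, relying on the fact (already noted in the introduction) that a positive elliptic differential operator of order $\nu=2m$ with smooth coefficients lies in $\Psi^{\nu}_{1,0}(M)$. Your write-up is somewhat more detailed than the paper's one-line justification, but the content is the same.
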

By following the strategy of Lebeau and Robbiano \cite{LabeauRobbiano1995}, we we shall derive the proof of the controllability Theorem \ref{Main:theorem}, from the corresponding spectral inequality for the elliptic pseudo-differential operator $E(x,D).$ 

\begin{theorem}\label{The:Spectral:Inequality} Let $\nu>0,$ and let $0\leq \delta<\rho\leq 1$ be such that $\rho\geq 1-\delta.$ Let $E(x,D)\in \Psi^\nu_{\rho,\delta}(M)$ be an elliptic positive pseudo-differential operator of order $\nu>0.$ Let $(x,\xi)\in T^*M,$ and assume that for any $\xi\neq 0,$ $E(x,\xi)>0$ is strictly positive.  Then, for any non-empty  open subset $\omega\subset M,$ we have
\begin{equation}\label{Spectral:Inequality:Intro}
    \Vert \varkappa\Vert_{L^2(M)}\leq C_1e^{C_2 {\lambda}}\Vert \varkappa\Vert_{L^2(\omega)},\,\,\,\varkappa\in \textnormal{span}\{\rho_j:\lambda_j\leq \lambda\}.
\end{equation}
\end{theorem}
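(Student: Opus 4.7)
The plan is to lift the problem from $M$ to the closed manifold $M\times\mathbb{T}$ so as to reduce the spectral inequality to a propagation-of-smallness estimate for an augmented \emph{elliptic} pseudo-differential operator, which the Ruzhansky-Turunen global calculus renders tractable. For $\varkappa=\sum_{\lambda_j\le\lambda}c_j\rho_j$ I form the auxiliary function
\begin{equation*}
F(x,t)=\sum_{\lambda_j\le\lambda}c_j\,\rho_j(x)\,\cosh(\lambda_j t),
\end{equation*}
so that $F(\cdot,0)=\varkappa$ and $(\partial_t^2-A)F=0$ on $M\times\mathbb{R}$, where $A:=E(x,D)^{2/\nu}\in\Psi^{2}_{\rho,\delta}(M)$ is the positive elliptic operator supplied by the functional calculus of $E$. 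After a smooth cut-off and periodization in $t$ to place the construction on $M\times\mathbb{T}$ (the step drawn from the Ruzhansky-Turunen philosophy), the augmented operator $P:=\partial_t^2-A$ belongs to $\Psi^{2}_{\rho,\delta}(M\times\mathbb{T})$ and has principal symbol $-\tau^2-a_2(x,\xi)$, which is non-vanishing off the zero section because $a_2(x,\xi)>0$ for $\xi\ne 0$; hence $P$ is elliptic on the closed manifold $M\times\mathbb{T}$.

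Two crude $L^2$ bounds follow from $\cosh(\lambda_j t)\le e^{\lambda|t|}$, namely
\begin{equation*}
\|F(\cdot,t)\|_{L^2(M)}\le e^{\lambda|t|}\|\varkappa\|_{L^2(M)}\quad\text{and}\quad \|F\|_{L^2(\omega\times(-a,a))}\le C\,e^{\lambda a}\|\varkappa\|_{L^2(\omega)},
\end{equation*}
and a standard trace inequality bounds $\|\varkappa\|_{L^2(M)}$ from above by $\|F\|_{L^2(M\times(-b,b))}$ for small $b$. The decisive step is a three-balls interpolation for the elliptic operator $P$,
\begin{equation*}
\|F\|_{L^2(M\times I_0)}\le C\,\|F\|_{L^2(\omega\times I)}^{\theta}\,\|F\|_{L^2(M\times I_1)}^{1-\theta},
\end{equation*}
for a fixed $\theta\in(0,1)$ and nested intervals $I_0\Subset I\Subset I_1$ about the origin. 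Plugging the two growth bounds into this inequality and dividing by $\|\varkappa\|_{L^2(M)}^{1-\theta}$ produces the target spectral inequality $\|\varkappa\|_{L^2(M)}\le C_1e^{C_2\lambda}\|\varkappa\|_{L^2(\omega)}$.

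The main obstacle is establishing the three-balls inequality for $P$, since the classical pointwise Carleman method is unavailable for the non-local operator $A$. My strategy is to stay inside the Ruzhansky-Turunen global calculus on $M\times\mathbb{T}$: $P$ admits a parametrix $Q\in\Psi^{-2}_{\rho,\delta}(M\times\mathbb{T})$ modulo a smoothing remainder, and the Calder\'on-Vaillancourt theorem (valid precisely under the hypotheses $0\le\delta<\rho\le 1$ and $\rho\ge 1-\delta$) yields $L^2$-boundedness of the symbolic multipliers and commutators needed for the argument. A Littlewood-Paley decomposition adapted to the joint spectrum of $P$ reduces the interpolation to dyadic frequency bands on each of which the inverse principal symbol dominates, so that the standard three-balls estimate for the elliptic principal differential part (proved by a Hadamard three-lines or local Carleman estimate in coordinates) applies, while the lower-order symbolic remainders are absorbed. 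Summing the dyadic estimates gives the required global interpolation with constants uniform in $\lambda$, and the rest is the Lebeau-Robbiano-type bookkeeping described above.
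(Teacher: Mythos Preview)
Your overall architecture---augment by a time variable, use $F$ with $(\partial_t^2-A)F=0$, periodize so that the augmented operator lives on a closed manifold, and then run a Lebeau--Robbiano bookkeeping---matches the paper's. But the decisive step, the three-balls inequality
\[
\|F\|_{L^2(M\times I_0)}\le C\,\|F\|_{L^2(\omega\times I)}^{\theta}\,\|F\|_{L^2(M\times I_1)}^{1-\theta}
\]
for solutions of $PF=0$, is not established by the argument you sketch, and this is exactly the obstacle the paper is written to circumvent. Propagation of smallness from $\omega\times I$ to $M\times I_0$ is a quantitative unique-continuation statement; it does \emph{not} follow from ellipticity, a parametrix, or Calder\'on--Vaillancourt alone. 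Your proposed mechanism---Littlewood--Paley localize, apply a ``standard three-balls estimate for the elliptic principal differential part'', then absorb lower-order symbolic remainders---breaks at each step: the spectral projectors do not commute with the spatial localization that three-balls needs; for a genuinely pseudo-differential $A=E(x,D)^{2/\nu}$ the principal symbol $a_2(x,\xi)$ is merely homogeneous of degree $2$ (and in the $(\rho,\delta)$-class not even that), so there is no ``principal differential part'' to invoke; and lower-order pseudo-differential perturbations are precisely the terms that Carleman-type arguments do \emph{not} absorb in general. Indeed, unique continuation is known to fail already for smooth-coefficient elliptic differential operators of order $\ge 4$, so a black-box reduction of this kind cannot succeed.

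The paper's key idea is to avoid proving any interpolation inequality for $P$ at all. It applies the \emph{known} Lebeau--Zuazua interpolation inequality for the second-order differential operator $-\partial_t^2-\Delta_g$ (Lemma~2.2, proved via Carleman estimates) to the auxiliary function $\phi$, and then bounds the forcing term $\Vert(-\partial_t^2-\Delta_g)\phi\Vert_{L^2}$ by $C\Vert(-\partial_t^2+E(x,D)^{2/\nu})\phi\Vert_{L^2}$. The latter inequality is exactly where the periodization and Calder\'on--Vaillancourt enter: on the closed manifold $M\times\mathbb{T}_{T,\varepsilon}$ the composition $(-\partial_t^2-\Delta_g)(-\partial_t^2+E(x,D)^{2/\nu})^{-1}\in\Psi^0_{1,\delta}$ is $L^2$-bounded with norm independent of $\varepsilon$. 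Since $(-\partial_t^2+E(x,D)^{2/\nu})\phi$ vanishes on $M\times(0,T)$ by construction (this is why the paper uses the $\sinh$ profile with $\phi(\cdot,0)=0$ and $\partial_t\phi(\cdot,0)=\varkappa$, matching the boundary term in Lemma~2.2), the residual lives only on $[T,T+\varepsilon]$ and is killed by sending $\varepsilon\to 0$. In short: the Carleman/unique-continuation work is done once and for all for the Laplacian, and the pseudo-differential calculus is used only for a norm comparison, not for propagation of smallness. Your proposal places the full burden of unique continuation on the non-local operator $P$, and that is the gap.
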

As a consequence of Theorem \ref{The:Spectral:Inequality} we obtain in the case of elliptic differential operators the following spectral inequality.

\begin{corollary}\label{Spectral:ineq:Diff} Let $\nu\in 2\mathbb{N},$ and let $E(x,D)$ be a positive elliptic differential operator  of order $\nu$ and with smooth coefficients on $M.$ Let $(x,\xi)\in T^*M,$ and assume that for any  $\xi\neq 0,$ $E(x,\xi)>0$ is strictly positive.  Then, for any non-empty  open subset $\omega\subset M,$ we have
\begin{equation}\label{Lebeau:Robbiano:Diff}
    \Vert \varkappa\Vert_{L^2(M)}\leq C_1e^{C_2 {\lambda}}\Vert \varkappa\Vert_{L^2(\omega)},\,\,\,\varkappa\in \textnormal{span}\{\rho_j:\lambda_j\leq \lambda\}.
\end{equation}
 \end{corollary}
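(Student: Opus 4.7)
The plan is to derive Corollary \ref{Spectral:ineq:Diff} as an immediate specialisation of Theorem \ref{The:Spectral:Inequality}, so the real work is verifying that the hypotheses of that theorem are met by positive elliptic differential operators of order $\nu=2m$ with smooth coefficients.

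First I would recall that in any local chart $(U,\kappa)$ such an operator has the form $E(x,D)u(x)=\sum_{|\beta|\leq \nu} a_\beta(x) D^\beta u(x)$ with $a_\beta\in C^\infty(\kappa(U))$, so its local full symbol $E(x,\xi)=\sum_{|\beta|\leq\nu} a_\beta(x)\xi^\beta$ is a polynomial of degree $\nu$ in $\xi$ with smooth coefficients. An elementary computation shows that for every multi-indices $\alpha,\beta$,
\begin{equation*}
|\partial_x^\beta \partial_\xi^\alpha E(x,\xi)|\leq C_{\alpha\beta}(1+|\xi|)^{\nu-|\alpha|},
\end{equation*}
which places $E(x,\xi)$ in the H\"ormander symbol class $S^{\nu}_{1,0}$ with $\rho=1$ and $\delta=0$. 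Since differential operators transform in a covariant way under smooth changes of coordinates and the class $S^\nu_{1,0}$ is invariant under such changes (a standard fact from Chapter XVIII of \cite{Hormander1985III}), we obtain $E(x,D)\in\Psi^\nu_{1,0}(M)$ globally on the closed manifold $M$.

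Next I would check that the parameters $\rho=1$, $\delta=0$ satisfy the structural conditions required by Theorem \ref{The:Spectral:Inequality}, namely $0\leq\delta<\rho\leq 1$ and $\rho\geq 1-\delta$; both are trivially verified since $0<1$ and $1\geq 1-0$. The positivity and ellipticity assumptions in the corollary, that $E(x,\xi)>0$ for every $\xi\neq 0$ and every $x\in M$ (with $E(x,D)$ a positive operator on $L^2(M)$), are word-for-word the hypotheses of Theorem \ref{The:Spectral:Inequality}.

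Having identified the setting, the proof would close by a direct invocation of Theorem \ref{The:Spectral:Inequality}: the spectral data $(\lambda_j^\nu,\rho_j)$ of $E(x,D)$ satisfy
\begin{equation*}
\Vert \varkappa\Vert_{L^2(M)}\leq C_1 e^{C_2\lambda}\Vert\varkappa\Vert_{L^2(\omega)},\qquad \varkappa\in\mathrm{span}\{\rho_j:\lambda_j\leq\lambda\},
\end{equation*}
for any non-empty open subset $\omega\subset M$. There is no genuine obstacle here: the only point requiring mild care is to state clearly, once and for all, that positive elliptic differential operators of even order with smooth coefficients are elements of $\Psi^\nu_{1,0}(M)$, after which the corollary is a tautological consequence of Theorem \ref{The:Spectral:Inequality}.
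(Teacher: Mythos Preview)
Your proposal is correct and takes essentially the same approach as the paper: the paper treats this corollary as an immediate consequence of Theorem~\ref{The:Spectral:Inequality}, relying only on the observation (made in the introduction) that positive elliptic differential operators of even order with smooth coefficients belong to $\Psi^\nu_{1,0}(M)$. Your write-up is in fact more explicit than the paper's, which gives no separate proof at all.
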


 The point of view that we adopt, and that is one of the novelties of this work, is the use of the pseudo-differential calculus on compact manifolds without boundary (see H\"ormander \cite{Hormander1985III}) in the proof of the spectral inequality \eqref{Spectral:Inequality:Intro}. One of the reasons for introducing this technique comes from the lack of Carleman estimates for pseudo-differential operators (which in general are not local operators)  (see e.g. the discusssion by Rousseau and Robbiano  in \cite[Subsection 1.4]{RousseauRobbiano2020}). Nevertheless, when using the theory of pseudo-differential operators, other techniques of the modern microlocal analysis as the Calder\'on-Vaillancourt theorem for closed manifolds, and  the analytic functional calculus of elliptic operators can be applied in order to prove an interpolation inequality of the form $\Vert F\Vert_{H^1(M\times (\alpha,T-\alpha))}\lesssim \Vert F\Vert_{H^1(M\times (0,T)}^\kappa \Vert F \Vert^{1-\kappa}_{L^2(\omega)}$ for a suitable wave function $F$ and that is a crucial tool in the proof of  \eqref{Spectral:Inequality:Intro}. One of the difficulties in the proof of this inequality is that the operators $E(x,D)$ could be in general of  non-local type.

We also observe that the main reason in considering only in our analysis the case of a compact manifold without boundary is the use of the pseudo-differential calculus by itself. Indeed, when the manifold $M$ has a non-empty smooth boundary $\partial M$, the interior $\textnormal{int}(M)$ of $M$ is an open submanifold, and the  pseudo-differential calculus for  $M$ requires a more delicate analysis. In this setting, pseudo-differential operators are matrices acting from $C^\infty(\textnormal{int}(M))\oplus C^\infty(\partial M)$ into itself and the calculus uses the transmission property introduced by H\"ormander. We refer the reader to the seminal work of Boutet de Monvel in  \cite{Boutet:Acta} for details.

Now, we briefly discuss our main results in relation with some known results.
\begin{remark}
In the case where $E(x,D)=-\Delta_g$ is the positive Laplacian, the symbol $E(x,\xi)=\Vert \xi\Vert_{g}>0$ is strictly positive on $T^*M\setminus \{0\}.$ So, our Theorem \ref{The:Spectral:Inequality} recovers the Lebeau-Robianno inequality and then Theorem \ref{Main:theorem} recovers the null controllability of the heat equation for the Laplacian. We observe that in the case of fractional powers $E(x,D)=(-\Delta_g)^{\alpha},$ of order $\nu=2\alpha,$ $\alpha>0,$ the heat equation for the fractional operator becomes to be null-controllable in any time $T>0,$ always that $\alpha>1/2.$ This recover the fractional result due to Miller \cite{Miller2006}.  
\end{remark}
\begin{remark}
    If $E(x,D)=-\sum_{i,j=1}^na_{ij}(x)\partial^2_{x_ix_j}+\textnormal{lower terms},$ is a positive elliptic second order differential operator with real smooth coefficients $a_{ij}\in C^\infty(M),$ and $a_{ij}=a_{ji}$ for any $(i,j),$ the ellipticity condition implies that the principal symbol 
   $
        E(x,\xi)=\sum_{i,j=1}^{n}a_{ij}(x)\xi_i\xi_j\geq C\Vert\xi \Vert_{g}^2
   $ is strictly positive on $T^*M\setminus \{0\}.$ In this setting, the positivity condition on $E(x,D)$ and on its symbol $E(x,\xi)$ implies the null-controllability of \eqref{Main} for all $\alpha>1/2.$ This fact follows from  the validity of the spectral inequality for $E(x,D),$ see e.g. \cite{RousseauLebeau2012} and Theorem 1 in Miller \cite[Page 263]{Miller2006} (see Theorem \ref{Miller:Theorem} below). This situation is also covered by our main results.
\end{remark}
\begin{remark}
If $E(x,D)=\sum_{|\alpha|\leq \nu}a_{\alpha}(x)D_x^\alpha$ is a positive elliptic differential operator with smooth coefficients, necessarily $\nu\in 2\mathbb{N},$ and Corollary \ref{Spectral:ineq:Diff} gives the Lebeau-Robbiano spectral inequality \eqref{Lebeau:Robbiano:Diff}. Different to the case $\nu=2,$ where the spectral inequality \eqref{Lebeau:Robbiano:Diff} can be derived by the use of Carleman inequalities, the case of higher-order differential operators (that is when $\nu\geq 4$) there is a lack of Carleman estimates since the H\"ormander subellipticity condition for the weight function could be not valid. This is the case of product operators $E(x,D)=Q_1Q_2,$ see Rousseau and Robbiano  \cite[Subsection 1.4]{RousseauRobbiano2020}). Recently, for the case of the bi-Laplacian $-\Delta_g^2,$ the Lebeau-Robianno spectral inequality was proved  in \cite{RousseauRobbiano2020} endowed of the so called {\it clamped boundary conditions} leaving open the question for similar boundary conditions for higher-dimensional powers $(-\Delta_g)^k,$ $k\geq 3,$ on a manifold with non-empty smooth boundary and with the operator endowed with the natural boundary conditions, e.g. $u|_{\partial M}=\cdots= \partial_\nu^{k-1}u|_{\partial M}=0,$ or more general Lopatinskii-type conditions.
\end{remark}
\begin{remark}
    The Lebeau-Robbiano spectral inequality remains valid even if the coefficients of a second-order elliptic differential operators are not smooth. One allows the case of Lipchitz coefficients, coefficients with jumps at an interface, etc. We refer the reader to  \cite[Pages 55-56]{FuLuZhang2020} and the references discussed there for details.
\end{remark}
\begin{remark}
In the case of a single eigenfuction $\rho_j,$ the estimate in \eqref{Observation:Estimte} can be derived from the doubling property proved by Donnelly and Fefferman \cite{DonnellyFefferman}, see also \cite{Lin1991,JerisonLabeau}.  The sharpness of the estimate \eqref{Observation:Estimte} was proved by Jerison and Lebeau \cite{JerisonLabeau}. 
\end{remark}
In the general case of a positive pseudo-differential operator we have the following Donnelly-Fefferman type inequality (extending the previous result in \cite{DonnellyFefferman}). The proof of Theorem \ref{Donnelly-Type:Fefferman:Inequality} will be given in Section \ref{Donnelly:Fefferman:Inequality}.
\begin{theorem}\label{Donnelly-Type:Fefferman:Inequality} Let $\nu>0,$ and let $0\leq \delta<\rho\leq 1$ be such that $\rho\geq 1-\delta.$ Let $E(x,D)\in \Psi^\nu_{\rho,\delta}(M)$ be an elliptic positive pseudo-differential operator of order $\nu>0.$ Let $(x,\xi)\in T^*M,$ and assume that for any $\xi\neq 0,$ $E(x,\xi)>0$ is strictly positive. 
    For any $R>0$ let $B(x,R)$ be a ball defined by the geodesic distance, of radius $R>0$ and centred at $x.$ Then,
\begin{equation}\label{Donnelly-Fefferman}
    \sup_{B(x,2R)}|\varkappa|\leq  e^{C_1' {\lambda}+C_2'} \sup_{B(x,R)}|\varkappa|,\,\,\,\varkappa\in \textnormal{span}\{\rho_j:\lambda_j\leq \lambda\},
\end{equation}with $C_1'=C_{1}'(R)$ and $C_2'=C_2'(R)$ are dependent  only on the radius $R>0$ but  not on $\varkappa.$
\end{theorem}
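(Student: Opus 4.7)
The plan is to bootstrap Theorem \ref{The:Spectral:Inequality}, which is an $L^2$--$L^2$ statement, into a pointwise $L^\infty$--$L^\infty$ statement by using Sobolev embedding and a Bernstein-type inequality for the spectral subspace. Concretely, fix $x\in M$ and $R>0$, and set $\omega:=B(x,R)$ in Theorem \ref{The:Spectral:Inequality}. This directly gives
\begin{equation*}
    \|\varkappa\|_{L^2(M)}\leq C_{1}(R,x)\,e^{C_{2}(R,x)\lambda}\,\|\varkappa\|_{L^2(B(x,R))}
\end{equation*}
for every $\varkappa\in\mathrm{span}\{\rho_j:\lambda_j\leq \lambda\}$, where by compactness of $M$ and continuity of the construction one may choose the constants $C_{1},C_{2}$ depending only on $R$, i.e. uniformly in the centre $x$.

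Next I would convert the two sides. For the right-hand side, the trivial estimate
\begin{equation*}
    \|\varkappa\|_{L^2(B(x,R))}\leq |B(x,R)|^{1/2}\sup_{B(x,R)}|\varkappa|
\end{equation*}
suffices. For the left-hand side I would use a Bernstein-type inequality for the spectral subspace: since $E(x,D)\rho_j=\lambda_j^\nu\rho_j$ and $E$ is positive elliptic, the norm $\|u\|_{H^s(M)}$ is equivalent to $\|(I+E)^{s/\nu}u\|_{L^2(M)}$, so for any $\varkappa=\sum_{\lambda_j\leq\lambda}c_j\rho_j$ and any $s\geq 0$,
\begin{equation*}
    \|\varkappa\|_{H^s(M)}^2\leq C\sum_{\lambda_j\leq \lambda}(1+\lambda_j^\nu)^{2s/\nu}|c_j|^2\leq C\,(1+\lambda)^{2s}\,\|\varkappa\|_{L^2(M)}^2.
\end{equation*}
Choosing any $s>n/2$, Sobolev embedding $H^s(M)\hookrightarrow L^\infty(M)$ then gives
\begin{equation*}
    \sup_{B(x,2R)}|\varkappa|\leq\sup_{M}|\varkappa|\leq C\,(1+\lambda)^s\,\|\varkappa\|_{L^2(M)}.
\end{equation*}

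Chaining the three inequalities yields
\begin{equation*}
    \sup_{B(x,2R)}|\varkappa|\leq C\,(1+\lambda)^s\,C_1(R)\,e^{C_2(R)\lambda}\,|B(x,R)|^{1/2}\sup_{B(x,R)}|\varkappa|,
\end{equation*}
and the polynomial factor $(1+\lambda)^s$ is absorbed into the exponential using $(1+\lambda)^s\leq e^{\varepsilon\lambda+C(\varepsilon,s)}$ for any $\varepsilon>0$, producing constants $C_1'(R),C_2'(R)$ independent of $\varkappa$ and $\lambda$, which is exactly \eqref{Donnelly-Fefferman}. The main technical point I would expect to justify carefully is the uniformity of $C_1,C_2$ with respect to the centre $x$ of the ball $B(x,R)$: this uses the compactness of $M$ together with the fact that the proof of Theorem \ref{The:Spectral:Inequality} depends on $\omega$ only through a finite number of local geometric and symbolic quantities that vary continuously in $x$, so the supremum over $x$ of the constants obtained for $\omega=B(x,R)$ is finite. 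Everything else (Sobolev embedding, Bernstein-type estimate, volume comparison) is standard on a closed Riemannian manifold.
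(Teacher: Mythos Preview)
Your proposal is correct and follows essentially the same route as the paper: apply Theorem \ref{The:Spectral:Inequality} with $\omega=B(x,R)$, upgrade the left-hand $L^2(M)$ norm to $L^\infty(M)\supset L^\infty(B(x,2R))$ via the Bernstein-type estimate $\|(1+E)^{s/\nu}\varkappa\|_{L^2}\lesssim (1+\lambda)^s\|\varkappa\|_{L^2}$ combined with Sobolev embedding for $s>n/2$, and bound the right-hand $L^2(B(x,R))$ norm trivially by the sup on $B(x,R)$; the polynomial prefactor is then absorbed into the exponential. The only point you add beyond the paper's argument is the explicit remark about uniformity of the constants in the centre $x$, which the paper does not discuss.
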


\begin{remark}
We observe that  the case of the 1D spectral fractional heat equation was analysed in \cite{MicuZuazua2006}. Moreover, the analysis of null-controllability for the heat equation for the  1D-integral fractional Laplacian has been considered in \cite{Biccari}.    
\end{remark}

\subsection{Structure of the work}
This article is organised as follows. In Section \ref{Preliminaries} we present the basics on pseudo-differential operators and the null-controllability results for fractional differential problems on Hilbert spaces. In particular we present the basics about the construction of complex powers of pseudo-differential operators. Then, the spectral inequality in Theorem \ref{The:Spectral:Inequality} and the null-controllability result in Theorem \ref{Main:theorem}  are proved in Section \ref{SectionofProofs}.

\section{Null-controllability of the fractional heat equation and observability inequalities}\label{SectionofProofs}

Let us consider an orientable compact Riemannian manifold $(M,g)$ (without boundary) of dimension $n.$ Let us consider:
\begin{itemize}
    \item[-] The $L^2$-space $L^2(M)=L^2(M,d\textnormal{Vol}_g)$  associated with the volume form $d\textnormal{Vol}_g,$ for the metric $g.$ For simplicity we always write $dx:=d\textnormal{Vol}_g(x).$
    \item[-] The inner product $(f,g)=\smallint_{M}f(x)\overline{g(x)}dx$  on $L^2(M).$
    \item[-] For any pair $(\rho,\delta)\in [0,1]^2,$ such that $0\leq \delta<\rho\leq 1$ and $\rho\geq 1-\delta,$ we consider the class $\Psi^m_{\rho,\delta}(M)$ of pseudo-differential operators of order $m\in \mathbb{R}.$ We refer the reader to  Appendix I in Section \ref{Preliminaries} for the definition of these classes.  
\end{itemize}

If  $E(x,D)\in \Psi^\nu_{\rho,\delta}(M)$ of positive order $\nu>0,$ in this section we establish the null controllability for the model of fractional diffusion
\begin{equation}\label{Main:Eq:22:3}
\begin{cases}u_t(x,t)+ E(x,D)^\alpha u(x,t)=g(x,t)\cdot 1_\omega (x) ,& (x,t)\in M\times (0,T),
\\u(0,x)=u_0,\end{cases}
\end{equation} at any time $T>0,$ that is, we will prove that the solution to \eqref{Main:Eq:22:3} satisfies  $u(x,T)=0,$ if  $\alpha>1/\nu.$ As described in the introduction, first, we require validity of the spectral inequality \eqref{Spectral:Inequality:Intro}. We employ the following notation.
\begin{itemize}
    \item[-] We denote by $\omega\neq \emptyset$  an open  subset in $M,$ as well as a generic compact subset in $M$ will be denoted by $K.$  
    \item[-] For any $\lambda>0,$ we denote by $\textnormal{E}_{\lambda}$ the spectral projection on $L^2(M)$ defined by
    \begin{equation}
        \textnormal{E}_{\lambda}f=\sum_{\lambda_j\leq \lambda}(f,\rho_j)\rho_j,\,\, f\in L^2(M),
    \end{equation}where $\{\mu_j:=\lambda_j^\nu,\rho_j\}$ are the corresponding spectral data $$ E(x,D)\rho_j=\lambda_j^\nu \rho_j,\,\,\lambda_j\geq 0,$$ with the eigenfunctions $ \rho_j$ being $L^2$-normalised. 
    \item[-] For any $T>0,$ let us consider the space-time manifold $$M_T:=M\times (0,T),$$ and the Sobolev space $H^s(M_T)$ of order $s\in \mathbb{N},$ which consists of  all distribution
   $f$ on $M$ such that,
      \begin{equation}  
      \Vert f\Vert_{H^s(M_T)}^2=\sum_{0\leq j\leq s}\smallint\limits_0^T\smallint\limits_M\left[\left|\partial_t^jf(x,t)\right|^2+|(1-\Delta_g)^{\frac{j}{2}}f(x,t)|^2\right]dxdt<\infty,
    \end{equation}where $-\Delta_g$ is the positive Laplacian associated to the metric $g.$
    \item[-]  
Observe that under the identification $-(T+\varepsilon)\sim T+\varepsilon,$ $\varepsilon>0,$ the manifold $[-(T+\varepsilon),T+\varepsilon)$ can be identified with the dilated torus $\mathbb{T}_{T,\varepsilon}:=\mathbb{R}/2(T+\varepsilon)\mathbb{Z}\cong [-(T+\varepsilon),T+\varepsilon)$ and the resulting manifold is diffeomorphic to the circle $\mathbb{S}^1$, and then, it is compact and without boundary. Note also, that the operator $-\partial_t^2$ became, up to a constant, the (positive) Laplace-Beltrami operator on $\mathbb{T}_{T,\varepsilon}.$ Finally,
note that $-\partial_t^2\in \Psi^2_{1,0}(\mathbb{T}_{T,\varepsilon})$ is a positive and elliptic differential operator of second order on $\mathbb{T}_{T,\varepsilon}$ and then  $$E(x,t,D,\partial_t)=-\partial_t^2+E(x,D)^{\frac{2}{\nu}} \in \Psi^2_{1,\delta}({M}\times \mathbb{T}_{T,\varepsilon})$$ is also a positive and elliptic pseudo-differential operator on the product manifold ${M}\times \mathbb{T}_{T,\varepsilon}.$ Note that $M_T$ can be viewed as  an open sub-manifold of ${M}\times \mathbb{T}_{T,\varepsilon}.$
\end{itemize}
Our analysis for the null-controllability of the heat equation starts with the   spectral inequality in
Proposition \ref{Lemma:LR;Ineq} below that corresponds in Theorem \ref{The:Spectral:Inequality} to the case where $E(x,D)$ satisfies the lower bound $E(x,D)\geq cI,$ for some $c>0.$ Then, the proof of Theorem \ref{The:Spectral:Inequality} will be deduced from this particular situation.

\begin{proposition}\label{Lemma:LR;Ineq}  Let $(M,g)$ be a compact Riemannian manifold without boundary. Let   $0\leq  \delta<\rho\leq 1,$ be such  that $1-\delta\geq \rho$. Let $E(x,D)\in \Psi^\nu_{\rho,\delta}(M) $ be an elliptic positive pseudo-differential operator of order $\nu>0,$ and assume that for some $c>0,$  $E(x,D)\geq cI$ in $L^2(M)$ and that for any $(x,\xi)\in T^*M,$ $E(x,\xi)\geq c$ is strictly positive.

 Then, for any open subset $\omega\subset M,$   any $a_j\in \mathbb{R},$ and all $\lambda>0,$ the following observability inequality holds 
\begin{equation}\label{ObservabilityInequality}
    \left(\sum_{\lambda_j\leq \lambda}a_j^2\right)^\frac{1}{2}\leq C_1e^{C_2{\lambda}}\left\Vert \sum_{\lambda_j\leq \lambda}a_j\rho_j(x)  \right\Vert_{L^2(\omega)}.
\end{equation}
\end{proposition}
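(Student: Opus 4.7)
The plan is to execute the Lebeau-Robbiano elliptic-lifting strategy inside H\"ormander's global pseudo-differential calculus on the closed product manifold $M \times \mathbb{T}_{T,\varepsilon}$. Given $\varkappa = \sum_{\lambda_j \leq \lambda} a_j \rho_j$, I would build the wave-packet extension
\begin{equation*}
F(x,t) = \sum_{\lambda_j \leq \lambda} a_j \cosh(\lambda_j t)\,\rho_j(x),
\end{equation*}
so that $F(\cdot, 0) = \varkappa$. Seeley's complex-powers construction is available here because $E(x,D) \geq cI$ with $c>0$, yielding $E(x,D)^{2/\nu} \in \Psi^2_{\rho,\delta}(M)$; the lifted operator $\mathcal{E} := -\partial_t^2 + E(x,D)^{2/\nu}$ then belongs to $\Psi^2_{1,\delta}(M \times \mathbb{T}_{T,\varepsilon})$ as noted in the setup. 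A direct term-by-term computation gives $\mathcal{E}F = 0$, and the strict positivity of $E(x,\xi)$ on $T^*M \setminus 0$ makes the principal symbol $\tau^2 + E(x,\xi)^{2/\nu}$ of $\mathcal{E}$ globally elliptic on $T^*(M \times \mathbb{R}) \setminus 0$. Periodizing $F$ in the time variable onto the dilated torus $\mathbb{T}_{T,\varepsilon}$ turns the problem into one for a positive elliptic pseudo-differential operator on a closed manifold, where H\"ormander's global calculus directly applies.

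From the explicit form of $F$, orthonormality of the $\{\rho_j\}$, and the pointwise bound $|\cosh(\lambda_j t)| \leq e^{\lambda T}$ for $\lambda_j \leq \lambda$ and $|t|\leq T$, one has
\begin{equation*}
\Vert F \Vert_{H^1(M_T)} \leq C_1 e^{C_2 \lambda}\, \Vert \varkappa \Vert_{L^2(M)}.
\end{equation*}
In the reverse direction, the trace identity $F(\cdot,0) = \varkappa$ together with a standard $H^1 \to L^2$ trace estimate gives $\Vert \varkappa \Vert_{L^2(M)} \leq C \Vert F \Vert_{H^1(M \times (-\alpha,\alpha))}$, after shifting the analysis to the symmetric interval $(-T,T)$ so that $t=0$ sits in the interior of the observation window.

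The heart of the proof is the interior interpolation inequality already advertised in the introduction,
\begin{equation*}
\Vert F \Vert_{H^1(M \times (-\alpha, \alpha))} \leq C\, \Vert F \Vert_{H^1(M \times (-T, T))}^{\kappa}\, \Vert F \Vert_{L^2(\omega \times (-T, T))}^{1-\kappa}
\end{equation*}
for some $\kappa \in (0,1)$. I would derive this from a parametrix of $\mathcal{E}$ in $\Psi^{-2}_{\rho,\delta}(M\times\mathbb{T}_{T,\varepsilon})$ (which exists because $\mathcal{E}$ is globally elliptic and $\rho \geq 1-\delta$), combined with space-time cut-offs $\chi_\omega \in C_c^\infty(\omega \times (-T,T))$ and $\eta \in C_c^\infty(M \times (-\alpha,\alpha))$. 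The commutator $[\mathcal{E}, \eta]$, the parametrix, and their compositions lie in the zeroth-order H\"ormander class $\Psi^0_{\rho,\delta}$ and are $L^2$-bounded by the Calder\'on-Vaillancourt theorem. A logarithmic-convexity argument across Sobolev scales, built from the analytic functional calculus of $\mathcal{E}$ in the spirit of the Hadamard three-lines lemma, then supplies the exponent $\kappa\in(0,1)$. Finally, to reduce $\Vert F\Vert_{L^2(\omega\times(-T,T))}$ to $e^{C\lambda}\Vert\varkappa\Vert_{L^2(\omega)}$, I would Taylor-expand $\cosh(t\sqrt{E^{2/\nu}})\varkappa$ around $t=0$, isolate the leading contribution $\varkappa|_\omega$, and absorb the remainder (which unavoidably carries $\Vert\varkappa\Vert_{L^2(M)}$) into the LHS by choosing the time horizon suitably small; the absorption succeeds because $\kappa<1$.

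Assembling the three estimates gives
\begin{equation*}
\Vert \varkappa \Vert_{L^2(M)} \leq C\, (C_1 e^{C_2 \lambda}\, \Vert \varkappa \Vert_{L^2(M)})^{\kappa}\, (e^{C \lambda}\, \Vert \varkappa \Vert_{L^2(\omega)})^{1-\kappa},
\end{equation*}
and dividing through by $\Vert \varkappa \Vert_{L^2(M)}^{\kappa}$ yields \eqref{ObservabilityInequality} with enlarged constants. The main obstacle I anticipate is the interior interpolation inequality in the previous display: the classical Carleman route is blocked because $E(x,D)$ is in general non-local, and even for high-order differential operators the H\"ormander subellipticity condition for the weight function can fail. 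The estimate must therefore be extracted entirely from the H\"ormander symbolic calculus on the closed manifold $M\times\mathbb{T}_{T,\varepsilon}$, which is precisely the reason for the periodization-in-time device and the restriction to the invariance range $\rho \geq 1-\delta$.
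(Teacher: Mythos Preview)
Your overall architecture (elliptic lift, periodization to $M\times\mathbb{T}_{T,\varepsilon}$, H\"ormander calculus, Calder\'on--Vaillancourt) matches the paper, but the step you flag as the ``main obstacle'' is in fact a genuine gap in your argument, and the paper closes it by a different mechanism that you have not identified.

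\medskip

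\textbf{The gap.} You propose to obtain the interior interpolation inequality
\[
\Vert F\Vert_{H^1(M\times(-\alpha,\alpha))}\leq C\,\Vert F\Vert_{H^1}^{\kappa}\,\Vert F\Vert_{L^2(\omega\times(-T,T))}^{1-\kappa}
\]
from a parametrix of $\mathcal{E}$, commutators with cut-offs, $L^2$-boundedness of $\Psi^0$ operators, and ``logarithmic convexity in the spirit of Hadamard three lines''. These ingredients give you \emph{elliptic regularity} (local control of higher Sobolev norms by lower ones plus $\mathcal{E}F$) and interpolation between Sobolev scales; they do \emph{not} produce the quantitative unique-continuation content of the inequality, namely that the norm on the small set $\omega$ controls the norm on all of $M$. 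That propagation-from-$\omega$ statement is exactly what Carleman estimates supply in the classical case, and no amount of symbolic calculus or abstract three-lines interpolation replaces it. As written, your argument would go through equally well with $\omega$ replaced by $\emptyset$, which shows something essential is missing.

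\medskip

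\textbf{What the paper actually does.} The paper does \emph{not} prove a new interpolation inequality for $\mathcal{E}=-\partial_t^2+E(x,D)^{2/\nu}$. It imports the \emph{known} Lebeau--Zuazua interpolation inequality for the \emph{Laplacian} $-\partial_t^2-\Delta_g$ (Lemma~\ref{CarlemanInequality}, proved elsewhere via Carleman estimates), whose right-hand side carries the source term $\Vert(-\partial_t^2-\Delta_g)\phi\Vert_{L^2}$ and the boundary trace $\Vert\partial_t\phi(\cdot,0)\Vert_{L^2(\omega)}$. The pseudo-differential calculus enters only to convert that source term:
\[
\Vert(-\partial_t^2-\Delta_g)\phi\Vert_{L^2(M\times\mathbb{T}_{T,\varepsilon})}
\leq C\,\Vert(-\partial_t^2+E(x,D)^{2/\nu})\phi\Vert_{L^2(M\times\mathbb{T}_{T,\varepsilon})},
\]
because $(-\partial_t^2-\Delta_g)\,\mathcal{E}^{-1}\in\Psi^0_{1,\delta}(M\times\mathbb{T}_{T,\varepsilon})$ is $L^2$-bounded by Calder\'on--Vaillancourt, with operator norm uniform in $\varepsilon$. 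One then takes $\phi=\psi(t)F$ with $F(x,t)=\sum_{\lambda_j\leq\lambda}a_j\lambda_j^{-1}\sinh(\lambda_j t)\rho_j(x)$ and a carefully designed cut-off $\psi$; since $\mathcal{E}F=0$ on $[0,T]$, the converted source term is supported in $[T,T+\varepsilon]$ and vanishes as $\varepsilon\to 0$. The $\sinh$ choice (rather than your $\cosh$) is dictated by the boundary condition $\phi|_{t=0}=0$ in Lemma~\ref{CarlemanInequality} and makes the trace term exactly $\psi(0)\Vert\varkappa\Vert_{L^2(\omega)}$, avoiding your separate reduction step from $\Vert F\Vert_{L^2(\omega\times(-T,T))}$ to $\Vert\varkappa\Vert_{L^2(\omega)}$ (which, incidentally, is also not straightforward since the $\rho_j$ are not orthogonal on $\omega$).
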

We postpone the proof of Proposition \ref{Lemma:LR;Ineq} for a moment. Indeed, for the proof of this statement we require a delicate microlocal analysis. First, we recall an interpolation inequality for the Laplacian in the next subsection.

\subsection{An interpolation inequality}
For our further analysis we require the following interpolation inequality.
\begin{lemma}\label{CarlemanInequality} Let us consider the operator $A(x,t,D,\partial_t)=-\partial_t^2-\Delta_g\in \Psi^2_{1,0}(M_T)$ with  $M_T=M\times (0,T)$. Let $\omega$ be a non-empty open subset  in $M.$ 

Then, for any $T>0$ and all $\alpha\in (0,T/2),$ there exists $\delta\in (0,1)$ such that
\begin{equation}\label{Global:Carleman:Estimate}
    \Vert \phi \Vert_{H^1(M\times (\alpha,T-\alpha))}\leq C\Vert \phi\Vert_{H^1(M_T)}^\delta\left(\Vert A(x,t,D,\partial_t)\phi \Vert_{L^2(M\times (0,T))} +\Vert \partial_t\phi\Vert_{L^2(\omega)} \right)^{1-\delta},
\end{equation}for all $\phi\in H^2(M\times (0,T))$ such that $\phi=0$ in $M\times \{0\}.$
\end{lemma}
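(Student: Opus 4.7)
The plan is to establish the interpolation inequality \eqref{Global:Carleman:Estimate} by a propagation-of-smallness argument based on local Carleman estimates for the second-order elliptic operator $A=-\partial_t^2-\Delta_g$ on the space-time manifold $M_T$. Since the principal symbol $|\xi|_g^2+\tau^2$ is positive away from the zero section of $T^*M_T$, the general H\"ormander theory of Carleman estimates for real-principal-type elliptic operators applies: for any smooth weight $\varphi$ with non-vanishing gradient on a compact set $K \Subset M_T$, convexification $\varphi \mapsto e^{\gamma\varphi}$ for $\gamma$ large enough produces a weight that is pseudoconvex with respect to $A$, and hence a local Carleman estimate
\begin{equation*}
\sum_{|\alpha|\leq 1} h^{1-2|\alpha|}\|e^{\varphi/h} D^\alpha u\|_{L^2}^2 \leq C\|e^{\varphi/h} Au\|_{L^2}^2
\end{equation*}
holds for $u \in C_c^\infty(K)$ and all sufficiently small $h>0$.

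I would first turn these local estimates, through cutoffs supported in annular regions, into a \emph{three-ball inequality}: for concentric geodesic balls $B_{r_1}\subset B_{r_2}\subset B_{r_3}\Subset M_T$ there exists $\theta\in(0,1)$ with
\begin{equation*}
\|\phi\|_{H^1(B_{r_2})} \leq C\, \|\phi\|_{H^1(B_{r_3})}^{1-\theta}\bigl(\|\phi\|_{H^1(B_{r_1})} + \|A\phi\|_{L^2(B_{r_3})}\bigr)^\theta.
\end{equation*}
Iterating this inequality along a finite chain of overlapping balls connecting the observation region to an arbitrary point of $M\times[\alpha,T-\alpha]$ then propagates the smallness from the observation set into the interior time slab, the final H\"older exponent $\delta\in(0,1)$ being the product of the exponents at each step.

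To convert the observation quantity $\|\partial_t\phi\|_{L^2(\omega)}$ into an $H^1$-type bound on a genuine space-time cylinder, and to exploit the Dirichlet condition $\phi|_{t=0}=0$, I would combine two ingredients. First, a boundary Carleman estimate near $M\times\{0\}$ with a weight of the form $\varphi(x,t)=\psi(x)-\beta t$, where $\beta>0$ is large and $\psi$ is strictly subharmonic on a neighborhood of some $x_0\in\omega$, yields a bound on $\|\phi\|_{H^1(\omega'\times(0,\eta))}$ for some $\omega'\Subset\omega$ in terms of $\|A\phi\|_{L^2(M_T)}$ and $\|\partial_t\phi\|_{L^2(\omega)}$, by using the Dirichlet trace identity to handle the boundary terms that arise from integration by parts against $e^{2\varphi/h}$. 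Second, elliptic regularity for $A$ lets this $H^1$-bound on a thin collar serve as the starting ball in the three-ball chain.

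The principal obstacle is the geometric bookkeeping involved in the iteration. One must select the chain of balls so that it covers $M\times[\alpha,T-\alpha]$ (which is possible by compactness of $M$ and connectedness of $M\times[\alpha,T-\alpha]$) while remaining at positive distance from both $M\times\{0\}$ and $M\times\{T\}$, and one has to track the way the H\"older exponents $\theta_k$ at each step combine into the single exponent $\delta=\prod_k\theta_k\in(0,1)$. Once this combinatorial arrangement is in place, splicing the boundary Carleman estimate to the iterated three-ball inequality and absorbing higher-order terms via Young's inequality produces \eqref{Global:Carleman:Estimate} with some $\delta\in(0,1)$ depending on $T$, $\alpha$, $\omega$, and the geometry of $(M,g)$ but not on the particular $\phi$.
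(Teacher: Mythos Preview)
The paper does not actually prove this lemma: immediately after stating it, the author writes that the inequality ``has been obtained in \cite{LebeauLebeau1998} in the case of a compact manifold without boundary'' and refers to \cite{RousseauLebeau2012} for alternative proofs. So there is no in-paper argument to compare against; the lemma is quoted as a known result.

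Your sketch is the standard route used in those references: local Carleman estimates for the elliptic operator $-\partial_t^2-\Delta_g$, three-ball interpolation inequalities derived from them, propagation along a finite chain of overlapping balls covering $M\times[\alpha,T-\alpha]$, and a boundary estimate near $t=0$ that converts the trace data $\partial_t\phi|_{t=0}$ on $\omega$ (together with $\phi|_{t=0}=0$) into an $H^1$ seed on a small cylinder $\omega'\times(0,\eta)$. This is precisely the Lebeau--Robbiano/Lebeau--Zuazua strategy, and the later exposition in \cite{RousseauLebeau2012} follows exactly this pattern. So your proposal is correct and is not a genuinely different route; it is a faithful outline of the proof the paper is citing.

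One small point to keep straight: in the statement the term $\Vert\partial_t\phi\Vert_{L^2(\omega)}$ is the $L^2(\omega)$ norm of the trace $\partial_t\phi(\cdot,0)$ (this is how the paper uses it in Subsection~2.2.3), not a space-time norm over $\omega\times(0,T)$. Your boundary-Carleman step already treats it this way, so the sketch is consistent with the intended meaning.
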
 The inequality  \eqref{Global:Carleman:Estimate} in Lemma \ref{Global:Carleman:Estimate} has been obtained in \cite{LebeauLebeau1998} in the case of a compact manifold without boundary.  We also refer the reader to Rousseau and Lebeau \cite{RousseauLebeau2012} for alternative proofs of this result even, for second-order elliptic operators with smooth coefficients.

\subsection{Proof of the spectral inequality}\label{Begininnig} 

For any $\lambda>0,$ let $\varkappa
\in \textnormal{Im}(\textnormal{E}_{\lambda}),$ that is $\varkappa$ can be written as
\begin{equation}
    \varkappa(x)=\sum_{\lambda_j\leq \lambda}a_j\rho_j(x).
\end{equation}

Observe that for the proof of the spectral inequality \eqref{ObservabilityInequality}, is enough to prove that the function
\begin{equation}\label{The:main:inequality:here}
    F(x,t):=\sum_{\lambda_j\leq \lambda}\frac{\sinh(\lambda_jt)}{\lambda_j} a_j\rho_j(x),\,\,(x,t)\in M_T:=M\times [0,T],
\end{equation}satisfies the interpolation inequality
\begin{equation}\label{Interpolation:Inequality}
    \Vert F \Vert_{H^1(M\times (\alpha,T-\alpha))}\leq C\Vert F\Vert_{H^1(M_{T})}^\kappa \Vert \varkappa \Vert_{L^2(\omega)}^{1-\kappa}.
\end{equation}

Indeed, one can follows e.g. the analysis in \cite{LebeauLebeau1998}. It starts  using the Parseval theorem  in order to have
\begin{align*} 
\Vert F\Vert^2_{H^1(M\times (\alpha,T-\alpha))} &\geq C \Vert F\Vert^2_{L^2(M\times (\alpha,T-\alpha))}\\
&=\smallint_\alpha^{T-\alpha}\smallint\limits_M\left|\sum_{\lambda_j\leq \lambda}\frac{\sinh(\lambda_jt)}{\lambda_j} a_j\rho_j(x)\right|^2dx\,dt\\
&=\sum_{\lambda_j\leq \lambda}|a_j|^2\smallint_\alpha^{T-\alpha}\left|\frac{\sinh(\lambda_jt)}{\lambda_j}\right|^2dt\\
&\geq \sum_{\lambda_j\leq C\lambda}|a_j|^2\smallint_\alpha^{T-\alpha}t^2dt\\
&=C_{\alpha}\sum_{\lambda_j\leq \lambda}|a_j|^2.
\end{align*}
Observing that
\begin{equation}
   \partial_tF(x,0)= \sum_{\lambda_j\leq \lambda}a_j\rho_j(x),\textnormal{  and  that, }\Vert F\Vert_{H^1(M_T)}^2\lesssim e^{2T\lambda}\lambda^2\sum_{\lambda_j\leq \lambda}|a_j|^2,
\end{equation}we deduce that
\begin{equation}
    C_{\alpha}\sum_{\lambda_j\leq \lambda}|a_j|^2\lesssim_{\alpha,T}\left( e^{2T\lambda}\lambda^2\sum_{\lambda_j\leq \lambda}|a_j|^2\right)^{\kappa}\left\Vert \sum_{\lambda_j\leq \lambda}a_j\rho_j(x)  \right\Vert_{L^2(\omega)}^{2(1-\kappa)},
\end{equation}from which we deduce that
\begin{align*}
    \left( \sum_{\lambda_j\leq \lambda}|a_j|^2\right)^{1-\kappa}\lesssim e^{2T\lambda}\lambda^2 \left\Vert \sum_{\lambda_j\leq \lambda}a_j\rho_j(x)  \right\Vert_{L^2(\omega)}^{2(1-\kappa)},
\end{align*}and in consequence
\begin{align*}
    \left( \sum_{\lambda_j\leq \lambda}|a_j|^2\right)^{\frac{1}{2}}\lesssim e^{{T\lambda/(1-\kappa)}}\lambda^{{1/(1-\kappa)} }\left\Vert \sum_{\lambda_j\leq \lambda}a_j\rho_j(x)  \right\Vert_{L^2(\omega)},
\end{align*}proving \eqref{ObservabilityInequality}. Indeed, one can estimate $e^{{T\lambda/(1-\kappa)}}\lambda^{{1/(1-\kappa)} }\lesssim e^{{T\lambda/(1-\kappa)}}.$

\begin{proof}[Proof of  Proposition \ref{Lemma:LR;Ineq}] 
We will procced with the proof of \eqref{Interpolation:Inequality}.
Note that, by normalising $\varkappa$ on $L^2(M)$ we can assume without loss of generality  that $\Vert \varkappa\Vert_{L^2(M)}=1.$

\subsubsection{ An auxiliar interpolation inequality on $[0,T+\varepsilon)$.}

Let $\varepsilon\in (0,1)$ be a positive parameter whose conditions will be imposed later.  Firstly, by replacing in  Lemma \ref{CarlemanInequality} the open interval $I_T:=(0,T)$ by $I_{T+\varepsilon}:=(0,T+\varepsilon),$ and with  $ M_{T+\varepsilon}:=M\times (0,T+\varepsilon), $ we shall make use of the following interpolation inequality:\\

\fbox{%
\parbox{\textwidth}{
 { \it 
  For any $T>0$ and all $\alpha\in (0,T/2),$ there exists $\kappa\in (0,1)$ such that
\begin{equation}\label{Global:Carleman:EstimateProofSpectralIneq} 
    \Vert \phi \Vert_{H^1(M\times (\alpha,T-\alpha))}\leq C\Vert \phi\Vert_{H^1(M_{T+\varepsilon})}^\kappa\left(\Vert (-\partial_t^2-\Delta_g)\phi \Vert_{L^2(M_{T+\varepsilon})} +\Vert \partial_t\phi(x,0)\Vert_{L^2(\omega)} \right)^{1-\kappa}
\end{equation} for all $\phi\in H^2(M_{T+\varepsilon})$ such that $\phi=0$ in $M\times \{0\}.$ }}
}\\

\subsubsection{ Construction of a suitable function $\phi$:}
Let us apply \eqref{Global:Carleman:EstimateProofSpectralIneq} with $\phi$ defined as follows. Consider $\psi\in C^\infty( M\times [0,T+\varepsilon])$ satisfying that
\begin{eqnarray}\psi(t):=
\begin{cases}C ,& \text{ }t\in [0,T],\,x\in M,
\\
0,& \text{ } t\in [T+\frac{3\varepsilon}{4},T+\varepsilon],\,x\in M,
\end{cases}
\end{eqnarray} where $0<C\leq \varepsilon. $ Assume that there exists $M_0>0,$ independent of $\varepsilon$ such that
\begin{equation}\label{BoundedNorm;phi:i}
    \Vert \psi^{(i)}\Vert_{L^\infty}\leq M_0,
\end{equation}for all $i\in \{1,2,3,4\}.$ We construct a function with these properties in Lemma \ref{Lemma:fucntion:psi} of Appendix II in Section \ref{Section:construction:psi}. Then, 
by considering the function
\begin{equation}
    F(x,t):=\sum_{\lambda_j\leq \lambda}\frac{\sinh(\lambda_jt)}{\lambda_j} a_j\rho_j(x),\,\,(x,t)\in M_T:=M\times [0,T],
\end{equation} and its extension to the whole interval $I_{T+\varepsilon}=[0,T+\varepsilon]$ by the constant function  equal to $F(x,T)$, that is
\begin{eqnarray}F(x,t)=
\begin{cases}\sum_{\lambda_j\leq \lambda}\frac{\sinh(\lambda_jt)}{\lambda_j} a_j\rho_j(x),\,\,(x,t) ,& \text{ }t\in [0,T],\,x\in M,
\\
F(x,T),& \text{ } t\in [T,T+\varepsilon],\,x\in M,
\end{cases}
\end{eqnarray}we consider 
\begin{equation}\label{Auxialiar:phi:}
    \phi(x,t):=F(x,t)\psi(t),\,\,(x,t)\in M_{T+\varepsilon}.
\end{equation}Note that $\phi(x,0)=F(x,0)=0=\phi(x,T+\varepsilon)=0.$ Note also that $\phi$ is an extension  of $F$ from $M_T$ to $M_{T+\varepsilon}.$ 

Now, let us consider the odd extension of $\phi$ to the whole interval $[-(T+\varepsilon),T+\varepsilon],$ that is,
\begin{equation}
    \phi(x,t)=-\phi(x,-t)=-\psi(-t)F(x,-t),\, -(T+\varepsilon)\,\leq t\leq 0.
\end{equation}

\subsubsection{Analysis of the term $\Vert \partial_t\phi(x,0)\Vert_{L^2(\omega)}$ } Note that $\psi$ has been defined on $[0,T+\varepsilon]$ and it has been extended to $[-(T+\varepsilon),0]$ using its odd extension, that is, the one defined by $\psi(-t)=-\psi(t),$ for $t\in [0,T+\varepsilon].$

An elementary calculation gives for any $t$ in a neigboorhood of $t=0,$ that
$$  \partial_t\phi(x,t)=\psi'(t)F(x,t)+\psi(t)\partial_tF(x,t). $$At $t=0$ we have
\begin{equation}
    \partial_t\phi(x,0)=\psi'(0)F(x,0)+\psi(0)\partial_tF(x,0)=\psi(0)\partial_tF(x,0),
\end{equation} and then
\begin{equation}
    \Vert \partial_t\phi(x,0)\Vert_{L^2(\omega)}=|\psi(0)|\times \Vert\partial_t F(x,0) \Vert_{L^2(\omega)}=|\psi(0)|\times\Vert\varkappa \Vert_{L^2(\omega)}.
\end{equation}

\subsubsection{Embedding of $M_T$ in a closed manifold $M\times \mathbb{T}_{T,\varepsilon},$  $\mathbb{T}_{T,\varepsilon}\cong \mathbb{S}^1$:} Now, we will proceed with a topological construction. 

It is clear that in the variable $t\in [-(T+\varepsilon),T+\varepsilon]$ the function $\phi$ can be extended periodically way to the whole line $\mathbb{R},$ or in other words, we can identify $\phi$ with a distribution (which we also denote by $\phi$) on the dilated torus
\begin{equation}
    \mathbb{T}_{T,\varepsilon}=\mathbb{R}/(2(T+\varepsilon)\mathbb{Z})=[-(T+\varepsilon),T+\varepsilon),
\end{equation}where in the resulting manifold $[-(T+\varepsilon),T+\varepsilon)$ we identify the end points  $-(T+\varepsilon)\sim T+\varepsilon.$ This construction allows the manifold $\mathbb{T}_{T,\varepsilon}$ to be diffeomorphic to the circle $\mathbb{S}^1,$ and in consequence the function $\phi\in \mathscr{D}'(M\times \mathbb{T}_{T,\varepsilon})$ is  smooth on the product space  $M\times \mathbb{T}_{T,\varepsilon}$ which is a compact manifold of $C^\infty$-class without boundary. In particular, we have the inclusion:
$\forall T,\varepsilon>0,\, M_T\subset M\times \mathbb{T}_{T,\varepsilon}. $
 
\subsubsection{Proof of the interpolation inequality} Now, we are ready for the proof of the interpolation inequality
$$ \Vert F \Vert_{H^1(M\times (\alpha,T-\alpha))}\leq C_{s_0,s_{00}}\Vert F\Vert_{H^1(M_{T})}^\kappa \Vert \varkappa \Vert_{L^2(\omega)}^{1-\kappa}.$$
In the identity \eqref{symmetry:l2:equation:Lapla} below, we will prove that with $\phi$ defined in \eqref{Auxialiar:phi:}, we have that
\begin{align*}
    \Vert (-\partial_t^2-\Delta_g)\phi(x,t) \Vert_{L^2(M_{T+\varepsilon})}= 1/\sqrt{2}  \Vert (-\partial_t^2-\Delta_g)\phi(x,t) \Vert_{L^2(M\times \mathbb{T}_{T,\varepsilon})}.
\end{align*}
The positivity condition
$$    E(x,D)\geq cI,\,\,\,c>0,$$ 
and the spectral mapping theorem gives the lower bound
$$    E(x,D)^{\frac{2}{\nu}}\geq c^{\frac{2}{\nu}}I,\,\,\,c>0,$$ 
which
gives the invertibility of the pseudo-differential operator $$ E(t,x,D,\partial_t)=-\partial_t^2+E(x,D)^{\frac{2}{\nu}}:H^2(M\times \mathbb{T}_{T,\varepsilon})\rightarrow L^2(M\times \mathbb{T}_{T,\varepsilon}),$$ that is, the operator 
,$$ E(t,x,D,\partial_t)^{-1}:L^2(M\times \mathbb{T}_{T,\varepsilon})\rightarrow H^2(M\times \mathbb{T}_{T,\varepsilon})$$  is bounded, see Lemma \ref{The:constant:B}.
Moreover,
\begin{align*}
      \Vert (-\partial_t^2-\Delta_g)\phi(x,t) \Vert_{L^2(M\times \mathbb{T}_{T,\varepsilon})}\leq C \Vert(-\partial_t^2+E(x,D)^{\frac{2}{\nu}})\phi(x,t)\Vert_{L^2(M\times \mathbb{T}_{T,\varepsilon})},
\end{align*}where the constant $C$ is independent of $\varepsilon>0.$ Indeed, 
$$ 
     \Vert (-\partial_t^2-\Delta_g)\phi(x,t) \Vert_{L^2(M\times \mathbb{T}_{T,\varepsilon})}$$
     $$= \Vert (-\partial_t^2-\Delta_g)(-\partial_t^2+E(x,D)^{2/\nu})^{-1} (-\partial_t^2+E(x,D)^{2/\nu})\phi(x,t) \Vert_{L^2(M\times \mathbb{T}_{T,\varepsilon})}.
$$ From Lemma \ref{Finite:Constant:CV}, the operator $ (-\partial_t^2-\Delta_g) (-\partial_t^2+E(x,D)^{2/\nu})^{-1} $ belongs to the class $\Psi^0_{1,\delta}(M)$ and the Calder\'on-Vaillancourt theorem gives its boundedness on $L^2,$ with its operator norm  bounded by a constant $C>0,$ independent of $\varepsilon>0.$
Consequently, 
$$ \Vert (-\partial_t^2-\Delta_g)(-\partial_t^2+E(x,D)^{2/\nu})^{-1} (-\partial_t^2+E(x,D)^{2/\nu})\phi(x,t) \Vert_{L^2(M\times \mathbb{T}_{T,\varepsilon})}$$ 
$$\leq C \Vert  (-\partial_t^2+E(x,D)^{2/\nu})\phi(x,t) \Vert_{L^2(M\times \mathbb{T}_{T,\varepsilon})}.
$$
In what follows we estimate this norm.
Let us denote 
\begin{equation*}
    Z_1:=   \Vert(-\partial_t^2+E(x,D)^{\frac{2}{\nu}})\phi(x,t)\Vert_{L^2(M\times \mathbb{T}_{T,\varepsilon})},
\end{equation*}
and let us keep in mind that the partial analysis above gives us the inequality
\begin{equation}\label{Carleman:partial:phi}
  \Vert \phi \Vert_{H^1(M\times (\alpha,T-\alpha))}\lesssim \Vert \phi\Vert_{H^1(M_{T+\varepsilon})}^\kappa(Z_1+|\psi(0)|\Vert \varkappa\Vert_{L^2(\omega)})^{1-\kappa} ,  
\end{equation}
where we have used that $$\psi(0)\varkappa=\partial_t\phi(x,0)=\psi(0)\partial_tF(x,0)=\psi(0)\sum_{\lambda_j\leq \lambda} a_j\rho_j(x).$$ Indeed, we record that
 for $t\in [0,T],$ we have
the identities
$$  \partial_t F(x,t)=\sum_{\lambda_j\leq \lambda}{\sinh(\lambda_jt)} a_j\rho_j(x),$$
$$  \partial_t^2 F(x,t)=\sum_{\lambda_j\leq \lambda}{\sinh(\lambda_jt)}\lambda_j a_j\rho_j(x).$$

\subsubsection{Estimate of $Z_1$} Observe that
$$ E(x,D)^{\frac{2}{\nu}}F(x,t)= \sum_{\lambda_j\leq \lambda}\frac{\sinh(\lambda_jt)}{\lambda_j} a_j E(x,D)^{\frac{2}{\nu}}(\rho_j)(x)=\sum_{\lambda_j\leq \lambda}\frac{\sinh(\lambda_jt)}{\lambda_j} a_j \lambda_j^2\rho_j(x) $$
$$ =\partial_t^2 F(x,t),\,\,t\in [0,T].  $$

Since $\phi(x,t)=\psi(t)F(x,t)$ on $[0, T],$ and $\psi$ is constant on $[0,T]$ we have that 
\begin{equation}\label{Cancellation}
    \forall x\in M,\,\forall t\in (0,T),\, (-\partial_t^2+E(x,D)^{\frac{2}{\nu}})\phi(x,t)=0.
\end{equation}
First, note that the following symmetry property is valid due to the identity $\phi(x,t)=-\phi(x,-t),$

\begin{equation}\label{symmetry:l2:equation}
   \Vert (-\partial_t^2+E(x,D)^{\frac{2}{\nu}})\phi(x,t) \Vert_{L^2(M\times \mathbb{T}_{T,\varepsilon})}^2= 2\Vert (-\partial_t^2+E(x,D)^{\frac{2}{\nu}})\phi(x,t) \Vert^2_{L^2(M_{T+\varepsilon})}.
\end{equation}
Moreover, once proved \eqref{symmetry:l2:equation}, we have in the particular case (where $E(x,D)=-\Delta_g$) of the Laplacian, the following inequality.
\begin{equation}\label{symmetry:l2:equation:Lapla}
    \Vert (-\partial_t^2-\Delta_g)\phi(x,t) \Vert_{L^2(M\times \mathbb{T}_{T,\varepsilon})}^2= 2\Vert (-\partial_t^2-\Delta_g)\phi(x,t) \Vert^2_{L^2(M_{T+\varepsilon})}.
\end{equation}
Indeed, for the proof of \eqref{symmetry:l2:equation} observe that
\begin{align*}
    & \Vert (-\partial_t^2+E(x,D)^{\frac{2}{\nu}})\phi(x,t) \Vert_{L^2(M\times \mathbb{T}_{T,\varepsilon})}^2\\
     &=\smallint\limits_{M}\smallint\limits_{-T-\varepsilon}^0\vert (-\partial_t^2+E(x,D)^{\frac{2}{\nu}})\phi(x,t) \vert^2 dt\,dx+\smallint\limits_{M}\smallint\limits_{0}^{T+\varepsilon}\vert (-\partial_t^2+E(x,D)^{\frac{2}{\nu}})\phi(x,t) \vert^2 dt\,dx\\
     &=\smallint\limits_{M}\smallint\limits_{-T-\varepsilon}^0\vert (-\partial_t^2+E(x,D)^{\frac{2}{\nu}})(-\phi(x,-t) \vert^2 dt\,dx+\smallint\limits_{M}\smallint\limits_{0}^{T+\varepsilon}\vert (-\partial_t^2+E(x,D)^{\frac{2}{\nu}})\phi(x,t) \vert^2 dt\,dx\\
     &=\smallint\limits_{M}\smallint\limits_{-T-\varepsilon}^0\vert \phi_{tt}(x,-t)-E(x,D)^{\frac{2}{\nu}}\phi(x,-t) \vert^2 dt\,dx+\smallint\limits_{M}\smallint\limits_{0}^{T+\varepsilon}\vert (-\partial_t^2+E(x,D)^{\frac{2}{\nu}})\phi(x,t) \vert^2 dt\,dx\\
     &=\smallint\limits_{M}\smallint\limits_{0}^{T+\varepsilon}\vert -\phi_{tt}(x,t)+E(x,D)^{\frac{2}{\nu}}\phi(x,t) \vert^2 dt\,dx+\smallint\limits_{M}\smallint\limits_{0}^{T+\varepsilon}\vert (-\partial_t^2+E(x,D)^{\frac{2}{\nu}})\phi(x,t) \vert^2 dt\,dx\\
&=2\smallint\limits_{M}\smallint\limits_{0}^{T+\varepsilon}\vert (-\partial_t^2+E(x,D)^{\frac{2}{\nu}})\phi(x,t) \vert^2 dt\,dx.
\end{align*}

The cancellation property in \eqref{Cancellation}, the symmetry property  $\phi(x,t)=-\phi(x,-t),$   and the positivity of the operator $(-\partial_t^2+E(x,D)^{\frac{2}{\nu}})$ on $L^2(M\times \mathbb{T}_{T,\varepsilon})$ (that is, making integration by parts with respect to the operator $(-\partial_t^2+E(x,D)^{\frac{2}{\nu}})$)  imply that
\begin{align*}
   \Vert (-\partial_t^2+E(x,D)^{\frac{2}{\nu}}) &\phi(x,t) \Vert_{L^2(M_{T+\varepsilon})}^2=\frac{1}{2}\Vert (-\partial_t^2+E(x,D)^{\frac{2}{\nu}})\phi(x,t) \Vert_{L^2(M\times \mathbb{T}_{T,\varepsilon})}^2\\
   &=\frac{1}{2}|((-\partial_t^2+E(x,D)^{\frac{2}{\nu}})\phi),(-\partial_t^2+E(x,D)^{\frac{2}{\nu}})\phi)_{L^2(M\times \mathbb{T}_{T,\varepsilon})}|\\
   &=\frac{1}{2}|((-\partial_t^2+E(x,D)^{\frac{2}{\nu}})^2\phi),\phi)_{L^2(M\times \mathbb{T}_{T,\varepsilon})}|\\
   &\leq \smallint\limits_{M}\smallint\limits_{ [0,T+\varepsilon)}|(-\partial_t^2+E(x,D)^{\frac{2}{\nu}})^2\phi(x,t)||\overline{\phi(x,t)}|dxdt\\
   &=\smallint\limits_{M}\smallint\limits_{ [T,T+\varepsilon)}|(-\partial_t^2+E(x,D)^{\frac{2}{\nu}})^2\phi(x,t)||{\phi(x,t)}|dxdt.
\end{align*}Therefore, we have the estimate
\begin{align*}
   \Vert (-\partial_t^2+E(x,D)^{\frac{2}{\nu}})\phi(x,t) \Vert_{L^2(M_{T+\varepsilon})}^2
   &\leq \smallint\limits_{M}\smallint\limits_{ [T,T+\varepsilon)}|(-\partial_t^2+E(x,D)^{\frac{2}{\nu}})^2\phi(x,t)||\phi(x,t)|dxdt\\
   &\leq \smallint\limits_{M}\smallint\limits_{ [T,T+\varepsilon)}|\phi(x,t)|dxdt\times \|(-\partial_t^2+E(x,D)^{\frac{2}{\nu}})^2\phi\|_{L^\infty}\\
   &=I\times II,
\end{align*}where 
\begin{equation*}
    I=\smallint\limits_{M}\smallint\limits_{ [T,T+\varepsilon)}|\phi(x,t)|dx\,dt,\,II= \|(-\partial_t^2+E(x,D)^{\frac{2}{\nu}})^2\phi\|_{L^\infty(M\times [T,T+\varepsilon))}.
\end{equation*} Now, we will estimate each one of these norms.\\

\subsubsection{Estimate for $I$} Note that
\begin{align*}
    I\leq \textnormal{Vol}(M)\times \varepsilon\times \Vert \phi\Vert_{L^\infty(M\times [T,T+\varepsilon])}=\textnormal{Vol}(M)\times \varepsilon \Vert \psi(t)\Vert_{L^\infty[T,T+\varepsilon]} \Vert F(x,t)\Vert_{L^\infty}.
\end{align*}Now,  for any $t$ fixed, and $s_0\in \mathbb{N}$ observe that
\begin{equation}
    |F(x,t)|\leq \sup_{0\leq s\leq s_0} \|(1+E(x,D))^{\frac{s}{\nu}}F(\cdot,t)\|_{L^\infty(M)}.
\end{equation}Using the Sobolev embedding theorem, we have that for any $s_{00}>n/2,$
\begin{align*}
  \sup_{0\leq s\leq s_0} \|(1+E(x,D))^{\frac{s}{\nu}}F(\cdot,t)\|_{L^\infty(M)}&\leq \sup_{0\leq s\leq s_0} \|(1+E(x,D))^{\frac{s+s_{00}}{\nu}}F(\cdot,t)\|_{L^2(M)}\\
  &\leq \sup_{0\leq s\leq s_0+s_{00}} \|(1+E(x,D))^{\frac{s}{\nu}}F(\cdot,t)\|_{L^2(M)}.
\end{align*}Now, let us use the spectral theory of the operator $(1+E(x,D))^{\frac{s}{\nu}}$ on $L^2(M).$
Since
\begin{align*}
   \|(1+E(x,D))^{\frac{s}{\nu}}F(\cdot,t)\|_{L^2(M)}^2 &=\left\Vert  \sum_{\lambda_j\leq \lambda}\frac{\sinh(\lambda_jt)}{\lambda_j}(1+\lambda_j^\nu)^{\frac{s}{\nu}}a_j\rho_j(x) \right\Vert^2_{L^2(M)} \\
   &=\sum_{\lambda_j\leq \lambda}\left|\frac{\sinh(\lambda_jt)}{\lambda_j}\right|^2(1+\lambda_j^\nu)^{\frac{2s}{\nu}}|a_j|^2\\
   &\lesssim \sum_{\lambda_j\leq \lambda}\left|\frac{\sinh(\lambda_jt)}{\lambda_j}\right|^2\lambda_j^{2s}|a_j|^2\lesssim \sum_{\lambda_j\leq \lambda}e^{\lambda_jt}\lambda_j^{2s-1}|a_j|^2\\
   &\lesssim_{s_0,s_{00}} e^{\lambda T} \sum_{\lambda_j\leq \lambda}|a_j|^2\\
   &= e^{\lambda T} \Vert \partial_t F(\cdot,0)\Vert_{L^2(M)}^2.
\end{align*}
In consequence, we deduce the inequality
\begin{equation}\label{auxialiar:s00}
    \forall s\in [0,s_0],\,\forall s_{00}>n/2,\,\,\Vert (1+E(x,D))^{\frac{s}{\nu}} F\Vert_{L^\infty}\lesssim_{s_0,s_{00}}e^{T\lambda/2}\|\partial_t F(\cdot,0)\Vert_{L^2(M)},
\end{equation}
as well as the Sobolev estimate
\begin{equation}\label{auxialiar:s0}
  \forall s_{00}>n/2,\, \forall s\in [0,s_0+s_{00}],\,\,\,\Vert (1+E(x,D))^{\frac{s}{\nu}} F\Vert_{L^2}\lesssim_{s_0,s_{00}}e^{T\lambda/2}\|\partial_t F(\cdot,0)\Vert_{L^2(M)}.
\end{equation}
With $s_0=0,$ we have that $\Vert  F\Vert_{L^\infty}\lesssim_{s_0,s_{00}}e^{ T\lambda /2}\|\partial_t F(\cdot,0)\Vert_{L^2(M)}.$ The analysis above gives us the inequality:
\begin{equation}\label{Estimate:I}
    I\lesssim \textnormal{Vol}(M)\times \varepsilon \Vert \psi(t)\Vert_{L^\infty[T,T+\varepsilon]} e^{T\lambda/2} \|\partial_t F(\cdot,0)\Vert_{L^2(M)}=\textnormal{Vol}(M)\times \varepsilon \Vert \psi(t)\Vert_{L^\infty[T,T+\varepsilon]} e^{T\lambda/2} ,
\end{equation} where we have used that $\|\partial_t F(\cdot,0)\Vert_{L^2(M)}=\Vert \varkappa\Vert_{L^2(M)}=1.$ Summarising 
 $$   I\lesssim \textnormal{Vol}(M)\times \varepsilon \Vert \psi(t)\Vert_{L^\infty[T,T+\varepsilon]} e^{T\lambda/2}. $$

\subsubsection{Estimate for $II$:} To estimate the second term, we start by observing the inequality
\begin{equation*}
    II= \|(-\partial_t^2+E(x,D)^{\frac{2}{\nu}})^2\phi\|_{L^\infty(M\times [T,T+\varepsilon))}=\|(-\partial_t^2+E(x,D)^{\frac{2}{\nu}})^2[\psi(t)F(x,T)]\|_{L^\infty(M\times [T,T+\varepsilon))}.
\end{equation*}Since
\begin{align*}
   (-\partial_t^2+ E(x,D)^{\frac{2}{\nu}})^2 &[\psi(t)F(x,T)]=(-\partial_t^2+ E(x,D)^{\frac{2}{\nu}})(-\partial_t^2+ E(x,D)^{\frac{2}{\nu}})[\psi(t)F(x,T)]\\
   &=(-\partial_t^2+E(x,D)^{\frac{2}{\nu}})[-\psi_{tt}(t)F(x,T)+\psi(t)E(x,D)^{\frac{2}{\nu}}F(x,T)]\\
   &=\psi^{(4)}(t)F(x,T)-2\psi_{tt}(t)E(x,D)^\frac{2}{\nu}F(x,T)\\
   &\,\,\,\,+\psi(t)E(x,D)^{\frac{4}{\nu}}(F(x,T)),
\end{align*}for $s_0\geq 4,$ and with $s_{00}>n/2,$  the Sobolev inequality in \eqref{auxialiar:s00} implies that
\begin{align*}
  &  \|(-\partial_t^2+E(x,D)^{\frac{2}{\nu}})^2[\psi(t)F(x,T)]\|_{L^\infty(M\times [T,T+\varepsilon))}\\
  &\leq \Vert \psi^{(4)}\Vert_{L^\infty[T,T+\varepsilon]}\Vert F(x,T)\Vert_{L^\infty}\\
  &+2\Vert\psi_{tt} \Vert_{L^\infty[T,T+\varepsilon]}\Vert E(x,D)^{\frac{2}{\nu}}F(x,T) \Vert_{L^\infty}+\Vert\psi\Vert_{{L^\infty[T,T+\varepsilon]}}\Vert E(x,D)^{\frac{4}{\nu}}F(x,T) \Vert_{L^\infty}\\
  &\leq \Vert \psi^{(4)}\Vert_{L^\infty[T,T+\varepsilon]}\Vert F(x,T)\Vert_{L^\infty}\\
  &+2\Vert\psi_{tt} \Vert_{L^\infty[T,T+\varepsilon]}\Vert(1+ E(x,D))^{\frac{s_{00}+2}{\nu}}F(x,T) \Vert_{L^2}\\
  &+\Vert\psi\Vert_{{L^\infty[T,T+\varepsilon]}}\Vert (1+ E(x,D))^{\frac{s_{00}+4}{\nu}}F(x,T) \Vert_{L^2}\\
  & \lesssim_{s_0,s_{00}}e^{{T} {\lambda}/2 }\|\partial_t F(\cdot,0)\Vert_{L^2(M)} (\Vert \psi^{(4)}\Vert_{L^\infty[T,T+\varepsilon]}+2\Vert\psi_{tt} \Vert_{L^\infty[T,T+\varepsilon]}+\Vert\psi\Vert_{{L^\infty[T,T+\varepsilon]}})\\
  &=e^{{T} {\lambda}/2 }\|\varkappa\Vert_{L^2(M)} (\Vert \psi^{(4)}\Vert_{L^\infty[T,T+\varepsilon]}+2\Vert\psi_{tt} \Vert_{L^\infty[T,T+\varepsilon]}+\Vert\psi\Vert_{{L^\infty[T,T+\varepsilon]}})\\
 &=e^{{T} {\lambda}/2 } (\Vert \psi^{(4)}\Vert_{L^\infty[T,T+\varepsilon]}+2\Vert\psi_{tt} \Vert_{L^\infty[T,T+\varepsilon]}+\Vert\psi\Vert_{{L^\infty[T,T+\varepsilon]}}) .
\end{align*}
\subsubsection{Estimate for $Z_1$}
Having proved these estimates for $I$ and  $II,$ we have that
\begin{align*}
  &  \Vert (-\partial_t^2+E(x,D)^{\frac{2}{\nu}})\phi(x,t) \Vert_{L^2(M_{T+\varepsilon})}^2\\
    &\lesssim \textnormal{Vol}(M)\times \varepsilon \Vert \psi(t)\Vert_{L^\infty[T,T+\varepsilon]} e^{T\lambda/2}\times e^{{T} {\lambda}/2 } (\Vert \psi^{(4)}\Vert_{L^\infty[T,T+\varepsilon]}+2\Vert\psi_{tt} \Vert_{L^\infty[T,T+\varepsilon]}+\Vert\psi\Vert_{{L^\infty[T,T+\varepsilon]}})\\
    &=\textnormal{Vol}(M)\times \varepsilon \Vert \psi(t)\Vert_{L^\infty[T,T+\varepsilon]} e^{T\lambda} (\Vert \psi^{(4)}\Vert_{L^\infty[T,T+\varepsilon]}+2\Vert\psi_{tt} \Vert_{L^\infty[T,T+\varepsilon]}+\Vert\psi\Vert_{{L^\infty[T,T+\varepsilon]}}).
\end{align*}
\subsubsection{Final Analysis}
The estimates above for $Z_1$  lead to the following inequality in view of the interpolation inequality
\begin{align*}
 &\Vert \phi\Vert_{H^1(M\times (\alpha,T-\alpha))}\\
 &\lesssim \Vert \phi\Vert_{H^1(M_{T+\varepsilon})}^\kappa((\textnormal{Vol}(M)\times \varepsilon \Vert \psi(t)\Vert_{L^\infty[T,T+\varepsilon]} e^{T\lambda} (\Vert \psi^{(4)}\Vert_{L^\infty[T,T+\varepsilon]}+2\Vert\psi_{tt} \Vert_{L^\infty[T,T+\varepsilon]}+\Vert\psi\Vert_{{L^\infty[T,T+\varepsilon]}}))^{\frac{1}{2}}\\
 &+|\psi(0)|\Vert \varkappa \Vert_{L^2(\omega)})^{1-\kappa} .
 \end{align*} 
Now,  dividing both sides of this inequality by $|\phi(0)|$ and using that $\psi(0)=\psi(T)=\psi(t),$ $0\leq t\leq T,$ we get 
\begin{align*}
 & \Vert F(x,t)\Vert_{H^1(M\times (\alpha,T-\alpha))}=\left\Vert \frac{\psi(t) F(x,t)}{\psi(0)}\right\Vert_{H^1(M\times (\alpha,T-\alpha))}\\
 &\lesssim \left\Vert \frac{\psi(t)F(x,t)}{\psi(0)}\right\Vert_{H^1(M_{T+\varepsilon})}^\kappa\\
 &\times\frac{1}{|\psi(0)|^{1-\kappa}} (  (\textnormal{Vol}(M)\times \varepsilon \Vert \psi(t)\Vert_{L^\infty[T,T+\varepsilon]} e^{T\lambda} (\Vert \psi^{(4)}\Vert_{L^\infty[T,T+\varepsilon]}+2\Vert\psi_{tt} \Vert_{L^\infty[T,T+\varepsilon]}+\Vert\psi\Vert_{{L^\infty[T,T+\varepsilon]}}))^{\frac{1}{2}}\\
 &+|\psi(0)|\Vert \varkappa \Vert_{L^2(\omega)})^{1-\kappa} \\
 &=\left\Vert \frac{\psi(t)F(x,t)}{\psi(0)}\right\Vert_{H^1(M_{T+\varepsilon})}^\kappa\\
 &\times (\left(\textnormal{Vol}(M)\times \varepsilon  e^{T\lambda}\frac{\Vert \psi(t)\Vert_{L^\infty[T,T+\varepsilon]}}{|\psi(0)|} \left(\frac{\Vert \psi^{(4)}\Vert_{L^\infty[T,T+\varepsilon]}}{\| \psi(0)|}+\frac{2\Vert\psi_{tt} \Vert_{L^\infty[T,T+\varepsilon]}}{| \psi(0)|}+1\right)\right)^{\frac{1}{2}}\\
 &+\Vert \varkappa \Vert_{L^2(\omega)})^{1-\kappa}\\
 &=\left\Vert \frac{\psi(t)F(x,t)}{\psi(T)}\right\Vert_{H^1(M_{T+\varepsilon})}^\kappa\\
 &\times (\left(\textnormal{Vol}(M)\times \varepsilon  e^{T\lambda}\frac{\Vert \psi(t)\Vert_{L^\infty[T,T+\varepsilon]}}{|\psi(T)|} \left(\frac{\Vert \psi^{(4)}\Vert_{L^\infty[T,T+\varepsilon]}}{|\psi(T)|}+\frac{2\Vert\psi_{tt} \Vert_{L^\infty[T,T+\varepsilon]}}{|\psi(T)|}+1\right)\right)^{\frac{1}{2}}\\
 &+\Vert \varkappa \Vert_{L^2(\omega)})^{1-\kappa}. 
 \end{align*} Consequently, the previous analysis leads to the auxiliary inequality
 \begin{align*}
    &  \Vert F(x,t)\Vert_{H^1(M\times (\alpha,T-\alpha))}\lesssim
   \left\Vert \frac{\psi(t)F(x,t)}{\psi(T)}\right\Vert_{H^1(M_{T+\varepsilon})}^\kappa\\
 &\times (\left(\textnormal{Vol}(M)\times \varepsilon  e^{T\lambda}\frac{\Vert \psi(t)\Vert_{L^\infty[T,T+\varepsilon]}}{|\psi(T)|} \left(\frac{\Vert \psi^{(4)}\Vert_{L^\infty[T,T+\varepsilon]}}{|\psi(T)|}+\frac{2\Vert\psi_{tt} \Vert_{L^\infty[T,T+\varepsilon]}}{|\psi(T)|}+1\right)\right)^{\frac{1}{2}}\\
 &+\Vert \varkappa \Vert_{L^2(\omega)})^{1-\kappa},
 \end{align*}
 and taking the limit when $\varepsilon\rightarrow 0^+$ in both sides of this estimate
we  conclude the expected inequality,
\begin{equation}\label{Expected:Inequality}
    \Vert F \Vert_{H^1(M\times (\alpha,T-\alpha))}\leq C_{s_0,s_{00}}\Vert F\Vert_{H^1(M_{T})}^\kappa \Vert \varkappa \Vert_{L^2(\omega)}^{1-\kappa},
\end{equation}
where we have used  that $\psi(t)/\psi(0)=\psi(t)/\psi(T)=1,$ $0\leq t\leq T,$ the smoothness of $\psi,$ the fact that (see Lemma \ref{Lemma:fucntion:psi}) $$ \lim_{\varepsilon\rightarrow0^+}\Vert \psi^{(4)}\Vert_{L^\infty[T,T+\varepsilon]}=\psi_{tttt}(T)=\lim_{\varepsilon\rightarrow0^+}\Vert \psi_{tt}\Vert_{L^\infty[T,T+\varepsilon]}=\psi_{tt}(T)=0,$$   and the properties
\begin{itemize}
    \item 
    \begin{equation}
        \lim_{
    \varepsilon\rightarrow 0^+
    }\left\Vert{\psi(t)}/{\psi(T)} \right\Vert_{L^\infty[T,T+\varepsilon]}=1,
    \end{equation}and
    \item 
    \begin{equation}\label{2:33}
 \lim_{\varepsilon\rightarrow 0}\left\Vert{\psi(t)F(x,t)}/{\psi(0)}\right\Vert_{H^1(M_{T+\varepsilon})}= \Vert F(x,t)\Vert_{H^1(M_{T})}.  
    \end{equation}
     \end{itemize}
For the proof of \eqref{2:33} note that for $\tilde{\psi}:=\psi(t)/\psi(0),$ and using that $F(x,t)=F(x,T)$ if $0\leq t\leq T+\varepsilon,$ we have
$$ 
  \lim_{\varepsilon\rightarrow 0}\left\Vert{\psi(t)F(x,t)}/{\psi(0)}\right\Vert^2_{H^1(M_{T+\varepsilon})}=\lim_{\varepsilon\rightarrow 0}\sum_{j=0,1}\smallint\limits_{0}^{T+\varepsilon}\Vert\partial_t^{(j)}(\tilde{\psi}(t)F(x,t))\Vert^2_{H^1(M)}dt $$ 
  $$ 
  =\lim_{\varepsilon\rightarrow 0}\sum_{j=0,1}\smallint\limits_{0}^{T}\Vert\partial_t^{(j)}(\tilde{\psi}(t)F(x,t))\Vert^2_{H^1(M)}dt+\lim_{\varepsilon\rightarrow 0}\sum_{j=0,1}\smallint\limits_{T}^{T+\varepsilon}\Vert\partial_t^{(j)}(\tilde{\psi}(t)F(x,t))\Vert^2_{H^1(M)}dt $$ 
  $$ 
  =\sum_{j=0,1}\smallint\limits_{0}^{T}\Vert\partial_t^{(j)}(F(x,t))\Vert^2_{H^1(M)}dt+\lim_{\varepsilon\rightarrow 0}\sum_{j=0,1}\smallint\limits_{T}^{T+\varepsilon}\Vert\tilde{\psi}^{(j)}(t)F(x,T)\Vert^2_{H^1(M)}dt $$   $$= \Vert F(x,t)\Vert_{H^1(M_{T})}^2, $$
where we have used that when $t\rightarrow T, $ $\tilde{\psi}^{(j)}(t)\rightarrow 0.$
Now, from \eqref{Expected:Inequality} we can follow the standard Lebeau-Robbiano argument that has been described at the beginning of the section to conclude the proof of the spectral inequality.  Having proved \eqref{ObservabilityInequality}, the proof  of Proposition \ref{Lemma:LR;Ineq} is complete.
\end{proof}

\begin{proof}[Proof of Theorem \ref{The:Spectral:Inequality}] Let $\mu,c>0$ be two positive parameters and assume that $0<c<\mu^\nu$. Define
$$\tilde{E}(x,D)=E(x,D)+c.$$ Observe that the principal symbol  $\tilde{E}(x,\xi)$ of the operator $\tilde{E}(x,D)$  satisfies the lower bound $$\tilde{E}(x,\xi)\geq c,\,\,\xi\in T^*_xM,$$ and also  that $\tilde{E}(x,D)\geq cI.$  If  $\{\mu_j:=\lambda_j^\nu,\rho_j\}$ are the corresponding spectral data $$ E(x,D)\rho_j=\lambda_j^\nu \rho_j,\,\,\lambda_j\geq 0,$$ of $E(x,D),$ then  $\{\mu_j+c:=\lambda_j^\nu+c,\rho_j\}$ are the corresponding spectral data $$\tilde{ E}(x,D)\rho_j=(\lambda_j^\nu+c) \rho_j,\,\,\lambda_j\geq 0$$  of the operator $\tilde{E}(x,D).$  Let $\lambda:=(\mu^\nu+c)^{\frac{1}{\nu}}.$ From Proposition \ref{Lemma:LR;Ineq} we deduce the spectral inequality
\begin{equation}\label{ObservabilityInequality:Proof:II}
    \left(\sum_{ (\lambda_j^\nu+c)^{\frac{1}{\nu} }\leq \lambda}a_j^2\right)^\frac{1}{2}\leq C_1e^{C_2{\lambda}}\left\Vert \sum_{{ (\lambda_j^\nu+c)^{\frac{1}{\nu} }\leq \lambda}}a_j\rho_j(x)  \right\Vert_{L^2(\omega)}.
\end{equation}Note that $(\lambda_j^\nu+c)^{\frac{1}{\nu} }\leq \lambda$ becomes equivalent to the inequality $\lambda_j\leq \mu$ and since $0<c<\mu^\nu,$ then $\lambda<2\mu.$ Thus, we have proved the spectral inequality 
\begin{equation}
    \left(\sum_{\lambda_j\leq \mu}a_j^2\right)^\frac{1}{2}\leq C_1e^{ 2C_2{\mu}}\left\Vert \sum_{\lambda_j\leq \mu}a_j\rho_j(x)  \right\Vert_{L^2(\omega)}.
\end{equation}In consequence the proof of Theorem \ref{The:Spectral:Inequality} is complete.
\end{proof}

\subsection{Null-controllability for the diffusion model} Now, we present the proof of Theorem \ref{Main:theorem} which we present in the following way.
\begin{theorem}\label{Main:theorem:statement} Let $E(x,D)$ be a positive and elliptic pseudo-differential operator of order $\nu>0$ in the H\"ormander class $\Psi^\nu_{\rho,\delta}(M).$  Let $(x,\xi)\in T^*M,$ and assume that for any $\xi\neq 0,$ $E(x,\xi)>0$ is strictly positive.  

Then, for any $\alpha>1/\nu,$  the fractional diffusion model
\begin{equation}\label{Main:statement}
\begin{cases}u_t(x,t)+ E(x,D)^\alpha u(x,t)=g(x,t)\cdot 1_\omega (x) ,& (x,t)\in M\times (0,T),
\\u(0,x)=u_0,\end{cases}
\end{equation} is null-controllable at any time $T>0,$ that is, there exists an input function $g\in L^2(M)$ such that for any $x\in M,$ $u(x,T)=0.$ 
\end{theorem}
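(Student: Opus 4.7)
The plan is to derive the null-controllability of \eqref{Main:statement} directly from the spectral inequality established in Theorem \ref{The:Spectral:Inequality} by invoking the general framework of Miller (Theorem \ref{Miller:Theorem} of the excerpt) for null-controllability of parabolic-type semigroups on Hilbert spaces. This rests on the standard HUM equivalence between null-controllability of \eqref{Main:statement} at time $T>0$ and the dual observability inequality
\begin{equation*}
\|e^{-T E(x,D)^\alpha} u_0\|_{L^2(M)}^{2} \leq C_T \int_0^T \|1_{\omega}\, e^{-t E(x,D)^\alpha} u_0\|_{L^2(M)}^{2}\, dt,\quad \forall u_0\in L^2(M).
\end{equation*}

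First I would observe that $E(x,D)^\alpha$ is a positive self-adjoint operator on $L^2(M)$ sharing the eigenbasis $\{\rho_j\}$ of $E(x,D)$, with eigenvalues $\mu_j^\alpha=\lambda_j^{\nu\alpha}$. This follows from the analytic functional calculus for positive elliptic pseudo-differential operators recalled in Section \ref{Preliminaries}. Consequently $-E(x,D)^\alpha$ generates an analytic contraction semigroup, and on the high-frequency component $(I-\textnormal{E}_\mu) u_0$ the semigroup decays as $e^{-t\mu^{\nu\alpha}}$, since $\textnormal{E}_\mu$ is the orthogonal projection onto $\mathrm{span}\{\rho_j:\lambda_j\leq \mu\}$.

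Second, I would recast the spectral inequality \eqref{Spectral:Inequality:Intro} in the form relevant for $E(x,D)^\alpha$. Setting $\Lambda:=\mu^{\nu\alpha}$, the inequality of Theorem \ref{The:Spectral:Inequality} reads
\begin{equation*}
\|\varkappa\|_{L^2(M)} \leq C_1\, e^{C_2\, \Lambda^{\frac{1}{\nu\alpha}}}\, \|\varkappa\|_{L^2(\omega)},\quad \varkappa\in \mathrm{span}\{\rho_j:\lambda_j^{\nu\alpha}\leq \Lambda\}.
\end{equation*}
Under the hypothesis $\alpha>1/\nu$ we have $1/(\nu\alpha)<1$, so the exponential cost of observing spectral packets of $E(x,D)^\alpha$ grows strictly sublinearly in the spectral parameter $\Lambda$ of $E(x,D)^\alpha$. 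This is precisely the assumption required to apply Miller's theorem.

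Third, applying Miller's theorem yields the desired observability inequality, which via HUM provides an input $g\in L^2(M\times(0,T))$ steering the solution of \eqref{Main:statement} to zero at time $T$, for any $T>0$ and any initial datum $u_0\in L^2(M)$. The main technical point is checking that the abstract hypotheses of Miller's framework (namely, the structure of the semigroup generator and the sublinear growth exponent $\kappa=1/(\nu\alpha)<1$) are satisfied; both reduce to self-adjointness and positivity of $E(x,D)^\alpha$, together with the reformulated spectral inequality above. No further obstacle comparable to the proof of Theorem \ref{The:Spectral:Inequality} arises here, since the Lebeau-Robbiano iteration (telescoping between low-frequency control via the spectral inequality and high-frequency dissipation of the semigroup) is entirely encoded in Theorem \ref{Miller:Theorem}.
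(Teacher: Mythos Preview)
Your proposal is correct and follows essentially the same approach as the paper: both arguments apply Miller's abstract result (Theorem~\ref{Miller:Theorem}) with $\mathcal{A}=E(x,D)^\alpha$, reducing the verification to the spectral inequality of Theorem~\ref{The:Spectral:Inequality} reparametrised in the spectral variable of $E(x,D)^\alpha$, which yields the sublinear exponent $\gamma=1/(\nu\alpha)\in(0,1)$ precisely when $\alpha>1/\nu$. Your write-up is somewhat more explicit about the HUM duality and the semigroup structure, but the logical content is identical to the paper's proof.
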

\begin{proof}
The spectral inequality in \eqref{ObservabilityInequality} allows us to make use of Theorem \ref{Miller:Theorem} with $A=E(x,D)^\alpha$. Indeed, since $A^\gamma= E(x,D)^{\alpha\gamma}=E(x,D)^\frac{1}{\nu},$ satisfies \eqref{ObservabilityInequality} (that is, the inequality \eqref{Espectral:Inequality:Hilbert} holds) for $\alpha\gamma=1/\nu.$ Because $\gamma\in (0,1)$ if an only if $\alpha>1/\nu,$ Theorem  \ref{Miller:Theorem} guarantees that this inequality on the fractional order $\alpha$ is a sufficient condition in order that \eqref{Main:statement} will be null-controllable in time $T>0.$ The proof of Theorem \ref{Main:theorem:statement} is complete.
\end{proof}
\subsection{The controllability cost for the diffusion model over short times}
 In the following result we analyse the controllability cost of the model \eqref{Main:statement} when the time is small.
 \begin{corollary}\label{Cost:Control} The controllability cost $C_T$ for the fractional heat equation \eqref{Main:statement} over short times $T\in (0,1)$ satisfies 
 \begin{equation}
     C_T\leq C_1e^{C_2T^{-\beta}},
 \end{equation}where $\beta>1/(\alpha\nu-1).$
 \end{corollary}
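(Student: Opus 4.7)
The plan is to deduce the short-time cost directly from the spectral inequality of Theorem \ref{The:Spectral:Inequality} via the Lebeau-Robbiano telescoping strategy, exactly as encoded in Miller's theorem (Theorem \ref{Miller:Theorem}).

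First, I would reformulate the spectral inequality of Theorem \ref{The:Spectral:Inequality} in terms of the generator $A:=E(x,D)^{\alpha}$ of the semigroup driving \eqref{Main:statement}. The eigenvalues of $A$ are $\mu_j=\lambda_j^{\alpha\nu}$ with the same eigenfunctions $\rho_j,$ so the substitution $\mu=\lambda^{\alpha\nu}$ in \eqref{Spectral:Inequality:Intro} gives
\[
\|\varkappa\|_{L^2(M)}\leq C_1\,e^{C_2\,\mu^{\gamma}}\,\|\varkappa\|_{L^2(\omega)},\qquad \varkappa\in\mathrm{span}\{\rho_j:\mu_j\leq \mu\},
\]
with $\gamma:=\tfrac{1}{\alpha\nu}\in(0,1)$ thanks to the assumption $\alpha>1/\nu.$ This is exactly the hypothesis of the abstract Lebeau-Robbiano framework with subcritical exponent.

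Next, I would run the standard telescoping construction. Split $(0,T)$ into consecutive pairs of intervals $J_n\cup K_n$ (for $n\geq 0$) with $|J_n|=|K_n|=c\,T\,2^{-n},$ and choose frequency cut-offs $\mu_n:=a\,2^{n/(1-\gamma)}/T^{1/(1-\gamma)}.$ On $J_n$ one uses the reformulated spectral inequality together with the Hilbert Uniqueness Method to construct a control of cost $\lesssim\exp(C_2\mu_n^{\gamma})$ that zeroes the spectral projection $\Pi_{\mu_n}u$ at the end of $J_n.$ On $K_n$ the control is switched off and the semigroup $e^{-tA}$ contracts the remaining high-frequency component $(I-\Pi_{\mu_n})u$ by the factor $e^{-|K_n|\mu_n}.$ The exponent $\tfrac{1}{1-\gamma}$ in the choice of $\mu_n$ is dictated by the requirement $|K_n|\mu_n\gg \mu_n^{\gamma};$ with that choice, the series $\sum_n \exp(C_2\mu_n^{\gamma}-c|K_n|\mu_n)$ converges geometrically and is dominated by its first term $\exp(C_2\mu_0^{\gamma})\simeq \exp(C_2'\,T^{-\gamma/(1-\gamma)}).$

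Finally, identifying $\gamma/(1-\gamma)=1/(\alpha\nu-1)$ produces exactly the claimed bound $C_T\leq C_1\exp(C_2\,T^{-1/(\alpha\nu-1)}).$ The strict inequality $\beta>1/(\alpha\nu-1)$ arises because the geometric comparison $|K_n|\mu_n\gg \mu_n^{\gamma}$ requires a safety margin to absorb the multiplicative constants; one tunes the parameters $a$ and $c$ differently for each admissible $\beta.$ I expect the only real obstacle to be the bookkeeping step: verifying that, for any admissible $\beta,$ the geometric ratios $\mu_{n+1}/\mu_n$ and $|J_n|/T$ can be chosen simultaneously so that the total time used is $\leq T,$ the cost series converges, and all constants are uniform in $T\in(0,1).$ This bookkeeping is precisely what Miller's abstract theorem encapsulates, so once the spectral inequality of Theorem \ref{The:Spectral:Inequality} is in place no further analytic input is required.
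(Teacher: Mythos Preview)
Your proposal is correct and takes essentially the same route as the paper: both set $A=E(x,D)^{\alpha}$, identify $\gamma=1/(\alpha\nu)\in(0,1)$ so that the spectral inequality of Theorem~\ref{The:Spectral:Inequality} becomes the hypothesis~\eqref{Espectral:Inequality:Hilbert} of Miller's Theorem~\ref{Miller:Theorem}, and then read off $\beta>\gamma/(1-\gamma)=1/(\alpha\nu-1)$. The only difference is cosmetic: the paper invokes Theorem~\ref{Miller:Theorem} in one line without unpacking the telescoping construction, whereas you sketch the Lebeau--Robbiano mechanism behind it before noting that Miller's theorem encapsulates exactly this bookkeeping.
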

 \begin{proof}
 For the proof, note that $A^\gamma= E(x,D)^{\alpha\gamma}=E(x,D)^\frac{1}{\nu},$ satisfies \eqref{ObservabilityInequality}  for $\alpha\gamma=1/\nu.$ Then, from Theorem  \ref{Miller:Theorem} we have the estimate $  C_T\leq C_1e^{C_2T^{-\beta}}$ for any $\beta>\gamma/(\gamma-1)=1/(\alpha\nu-1).$ The proof of Corollary \ref{Cost:Control} is complete.
 \end{proof}

\section{Conclusions and some open problems}

This work deals with the null-controllability of the fractional heat equation associated to an invertible positive elliptic pseudo-differential operator on a compact Riemannian manifold. Concrete examples are indeed invertible positive elliptic partial differential operators of (necessarily) even order. 

In view of the difficulties when working with Carleman estimates for high-order differential operators or even for pseudo-differential operators which in general are non-local operators, we have shown that the theory of pseudo-differential operators can be effectively used. Moreover, this theory was strongly used in deriving the interpolation inequality in \eqref{Expected:Inequality}, that is  \begin{equation}
    \Vert F \Vert_{H^1(M\times (\alpha,T-\alpha))}\leq C_{s_0,s_{00}}\Vert F\Vert_{H^1(M_{T})}^\kappa \Vert \varkappa \Vert_{L^2(\omega)}^{1-\kappa}.
\end{equation} 
In conclusion with this work
\begin{itemize}
    \item we show that the theory of pseudo-differential operators is an effective microlocal tool for the control theory of parabolic problems, even when we have the lack of Carleman estimates.
\end{itemize}
The following are some open problems that arise from this investigation.
\begin{itemize}
    \item To extend the spectral inequality in \eqref{Expected:Inequality} to the case of suitable boundary value problem on compact  manifolds with smooth boundary. Just to mention a few, the case of the Laplacian $-\Delta_g$ with the corresponding Dirichlet boundary conditions was derived in the classical Lebeau-Robianno result in \cite{LabeauRobbiano1995}, and the case of the bi-Laplacian $(-\Delta_g)^2$ endowed with the so called {\it clamped} boundary conditions was analysed in the recent work of Rousseau and Robbiano \cite{RousseauRobbiano2020}. Even, in \cite{RousseauRobbiano2020} the authors leave open the question of the validity of the spectral inequality for the powers $(-\Delta_g)^k,$ $k\geq 3,$ when $\partial M\neq 0.$
    \item The following problem could be of interest in global analysis: to analyse for which kind of manifolds $(M,g)$ the condition $\rho\geq 1-\delta$ can be removed in Theorem \ref{The:Spectral:Inequality}. Indeed, the inequality $\rho\geq 1-\delta$ is the required condition in order to have the invariance of the pseudo-differential calculus under changes of coordinates. We propose the following conjecture: in the case where $M$ is a manifold with symmetries, and under a suitable positivity condition on the symbol of the operator $E(x,D)$, for any $0\leq \delta<\rho\leq 1,$ the spectral inequality in  Theorem \ref{The:Spectral:Inequality} is still valid.
\end{itemize}

\section{Appendix I: Pseudo-differential operators and null-controllability on Hilbert spaces}\label{Preliminaries}
In this section we present the preliminaries about the theory of pseudo-differential operators used in this work as well as some results on the null-controllability for the Cauchy problem on Hilbert spaces. We use the following notation.
\begin{itemize}
    \item For any pair $(X,Y)$ of Hilbert spaces, we denote by $\mathscr{B}(X,Y)$ the family of bounded and linear operators $T:X\rightarrow Y.$
   
    \item The spectrum of a densely defined linear operator $A:\textnormal{Dom}(A)\subset{X}\rightarrow X$ will be denoted by $\sigma(A).$ 
\end{itemize}

\subsection{H\"ormander classes of pseudo-differential operators on compact manifolds}\label{S2.1}
 In this subsection we present the construction of the classes of pseudo-differential operators on compact manifolds by using local coordinate systems (see H\"ormander \cite{Hormander1985III} and e.g. M. Taylor \cite{Taylorbook1981} for an introductory presentation).
 
Let us briefly  introduce these  classes starting with the definition in the Euclidean setting. 

\begin{definition}[Symbol classes]
Let $U$ be an open  subset of $\mathbb{R}^n.$ We  say that  the {\it symbol} $a\in C^{\infty}(U\times \mathbb{R}^n, \mathbb{C})$ belongs to the H\"ormander class of order $m$ and of $(\rho,\delta)$-type, $S^m_{\rho,\delta}(U\times \mathbb{R}^n),$ $0\leqslant \rho,\delta\leqslant 1,$ if for every compact subset $K\subset U$ and for all $\alpha,\beta\in \mathbb{N}_0^n$, the symbol inequalities
\begin{equation}\label{seminorms}
  |\partial_{x}^\beta\partial_{\xi}^\alpha a(x,\xi)|\leqslant C_{\alpha,\beta,K}(1+|\xi|)^{m-\rho|\alpha|+\delta|\beta|},
\end{equation} hold true uniformly in $x\in K$ for all  $\xi\in \mathbb{R}^n.$
\end{definition}

 Then, a continuous linear operator $A:C^\infty_0(U) \rightarrow C^\infty(U)$ 
is a pseudo-differential operator of order $m$ of  $(\rho,\delta)$-type, if there exists
a symbol $a\in S^m_{\rho,\delta}(U\times \mathbb{R}^n)$ such that
\begin{equation*}
    Af(x)=\smallint\limits_{\mathbb{R}^n}e^{2\pi i x\cdot \xi}a(x,\xi)(\mathscr{F}_{\mathbb{R}^n}{f})(\xi)d\xi,
\end{equation*} for all $f\in C^\infty_0(U),$ where
$$
    (\mathscr{F}_{\mathbb{R}^n}{f})(\xi):=\smallint\limits_Ue^{-i2\pi x\cdot \xi}f(x)dx
$$ is the  Euclidean Fourier transform of $f$ at $\xi\in \mathbb{R}^n.$ In this case we denote the class of pseudo-differential operators with symbols in the family $S^m_{\rho,\delta}(U\times \mathbb{R}^n)$ as $\Psi^m_{\rho,\delta}(U).$

Once the definition of H\"ormander classes on open subsets of $\mathbb{R}^n$ is given, it can be extended to smooth manifolds as follows. \begin{remark}[Pseudo-differential operators on compact manifolds]
Given a compact manifold without boundary $M,$ a linear continuous operator $A:C^\infty_0(M)\rightarrow C^\infty(M) $ is a pseudo-differential operator of order $m$ of $(\rho,\delta)$-type, with $ \rho\geqslant   1-\delta, $ and $0\leq \delta<\rho\leq 1,$  if for every local  coordinate patch $\omega: M_{\omega}\subset M\rightarrow U_{\omega}\subset \mathbb{R}^n,$
and for every $\phi,\psi\in C^\infty_0(U_\omega),$ the operator
\begin{equation*}
    Tu:=\psi(\omega^{-1})^*A\omega^{*}(\phi u),\,\,u\in C^\infty(U_\omega),
\end{equation*} is a standard pseudo-differential operator with symbol $a_T\in S^m_{\rho,\delta}(U_\omega\times \mathbb{R}^n).$ As usually, $\omega^{*}$ and $(\omega^{-1})^*$ are the pullbacks, induced by the maps $\omega$ and $\omega^{-1}$ respectively. In this case we write $A\in \Psi^m_{\rho,\delta}(M).$ 
\end{remark} 
\begin{remark}[The principal symbol]
The symbol of a pseudo-differential operator $A$ is unique as an element in $\Psi^m_{\rho,\delta}(M)/\Psi^m_{\rho,\delta}(M).$ To this element we will call the {\it principal symbol of} $A.$  We denote it by
\begin{equation}
    a_m(x,\xi),\,\, (x,\xi)\in T^{*}M.
\end{equation}
The main feature of the principal symbol of a pseudo-differential operator is that it remains invariant under coordinates changes.
\end{remark}

In the next theorem we describe some fundamental properties of the H\"ormander  calculus  \cite{Hormander1985III}.
\begin{theorem}\label{calculus} Let $0\leqslant \delta<\rho\leqslant 1,$ be such that $\rho\geq 1-\delta.$ Then $\Psi^\infty_{\rho,\delta}(M):=\cup_{m\in \mathbb{R}} \Psi^m_{\rho,\delta}(M)$ is an algebra of operators stable under compositions and adjoints, that is:
\begin{itemize}
    \item [-] the mapping $A\mapsto A^{*}:\Psi^{m}_{\rho,\delta}(M)\rightarrow \Psi^{m}_{\rho,\delta}(M)$ is a continuous linear mapping between Fr\'echet spaces. 
\item [-] The mapping $(A_1,A_2)\mapsto A_1\circ A_2: \Psi^{m_1}_{\rho,\delta}(M)\times \Psi^{m_2}_{\rho,\delta}(M)\rightarrow \Psi^{m_1+m_2}_{\rho,\delta}(M)$ is a continuous bilinear mapping between Fr\'echet spaces.
\end{itemize}Moreover, any operator in the class   $ \Psi^{0}_{\rho,\delta}(M)$ admits a  bounded extension from $L^2(G)$ to  $L^2(G).$
\end{theorem}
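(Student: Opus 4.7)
The plan is to reduce all three assertions to the Euclidean statement on open subsets $U \subset \mathbb{R}^n$ and then transfer them to $M$ using the coordinate patches defining $\Psi^{m}_{\rho,\delta}(M)$. The hypothesis $\rho \geq 1-\delta$ is precisely what is needed to guarantee that $S^m_{\rho,\delta}(U\times\mathbb{R}^n)$ is stable under smooth diffeomorphisms $U\to U'$: the change-of-variables formula for oscillatory integrals produces an asymptotic expansion whose $\alpha$-th term loses $\delta|\alpha|$ derivatives in $x$ but gains $\rho|\alpha|$ in $\xi$, so one needs $\rho - \delta \geq 0$ together with $\rho + \delta \geq 1$ for the remainders to actually live in lower-order classes. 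Once local invariance is established, a partition of unity subordinate to a coordinate cover of $M$ reduces the global statements to finitely many Euclidean ones.

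For the adjoint, the plan is to start from the formal identity
\begin{equation*}
a^{*}(x,\xi) \sim \sum_{\alpha \in \mathbb{N}_{0}^{n}} \frac{1}{\alpha!}\,\partial_{\xi}^{\alpha} D_{x}^{\alpha}\,\overline{a(x,\xi)},
\end{equation*}
derived from the oscillatory integral representation of $A^{*}$, and to verify that the $\alpha$-th term lies in $S^{m-(\rho-\delta)|\alpha|}_{\rho,\delta}$. Because $\rho>\delta$ the orders strictly decrease, so a Borel-type construction gives a well-defined symbol modulo $S^{-\infty}$, with the Fréchet continuity of $A \mapsto A^{*}$ being visible from the explicit form of the seminorm estimates. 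The composition $A_1 \circ A_2$ is handled in exactly the same manner through the Leibniz-type asymptotic expansion
\begin{equation*}
(a_1 \# a_2)(x,\xi) \sim \sum_{\alpha \in \mathbb{N}_{0}^{n}} \frac{1}{\alpha!}\, \partial_\xi^\alpha a_1(x,\xi)\, D_x^\alpha a_2(x,\xi),
\end{equation*}
whose $\alpha$-th term belongs to $S^{m_1+m_2-(\rho-\delta)|\alpha|}_{\rho,\delta}$, and the bilinear continuity is read off the same estimates.

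For the $L^{2}$-boundedness of operators of order $0$, the plan is to invoke the Calderón–Vaillancourt theorem in its sharp form, whose classical proof proceeds by a dyadic decomposition of the symbol in phase space and an application of the Cotlar–Stein almost orthogonality lemma; this yields a bound of the form
\begin{equation*}
\|A\|_{L^2(M)\to L^2(M)} \lesssim \sup_{|\alpha|,|\beta|\leq N}\sup_{(x,\xi)}\,(1+|\xi|)^{-\delta|\beta|+\rho|\alpha|}\,|\partial_x^\beta \partial_\xi^\alpha a(x,\xi)|
\end{equation*}
for some $N = N(n)$, which controls the operator norm by finitely many seminorms of the symbol. The transfer to the manifold is again done by a partition of unity: locally $A$ is a finite sum of compactly supported pseudo-differential operators conjugated by coordinate maps, each $L^{2}$-bounded by the Euclidean result.

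The main obstacle is the borderline behaviour when $\rho = 1-\delta$: at this endpoint the asymptotic expansions for the adjoint and composition barely converge, so one must be quantitative about the remainders, and the Calderón–Vaillancourt estimate requires the sharpest almost-orthogonality argument. Away from the endpoint the three assertions are essentially automatic from the symbolic calculus, and the proposal is to cite the treatment in H\"ormander \cite{Hormander1985III} rather than redo the computation, since every step beyond the invariance argument is entirely standard.
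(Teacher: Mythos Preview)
Your proposal is correct and aligns with the paper's treatment: the paper does not give a proof of this theorem at all but simply states it as background, attributing it to H\"ormander \cite{Hormander1985III} and noting in a remark that the $L^2$-boundedness part is the microlocalised Calder\'on--Vaillancourt theorem. Your sketch of the standard argument (local reduction via coordinate invariance under $\rho\geq 1-\delta$, asymptotic expansions for adjoint and composition, Cotlar--Stein for $L^2$-boundedness) is accurate and more detailed than what the paper provides, and your final suggestion to cite \cite{Hormander1985III} rather than reproduce the computation is exactly what the paper does.
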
 
\begin{remark}
    The $L^2$-boundedness result in Theorem \ref{calculus} is the microlocalised version of the Calder\'on-Vaillancourt theorem, see \cite{CalderonVaillancourt71}. Moreover, if $A\in \Psi^0_{\rho,\delta}(\mathbb{R}^n)$ is such that $0\leq \delta\leq \rho\leq 1,$  $\rho\neq 1,$ then
    \begin{equation}
        \Vert A\Vert_{\mathscr{B}(L^2)}\lesssim \sup_{|\alpha|+|\beta|\leq [\frac{n}{2}]+1} C_{\alpha,\beta},
    \end{equation}where
    $$ C_{\alpha,\beta}:=\sup_{(x,\xi)\in \mathbb{R}^{2n}}   (1+|\xi|)^{\rho|\alpha|-\delta|\beta|} |\partial_{x}^\beta\partial_{\xi}^\alpha a(x,\xi)|.$$
\end{remark}
\begin{remark}\label{remark:CVTh}
    If $U$ is an open subset of a close manifold $M$ of dimension $n,$ and $f\in C^\infty_0(U),$ then, by  microlocalising   $A\in \Psi^0_{\rho,\delta}(M)$ when $0\leq \delta < \rho\leq 1,$ and $\rho\geq 1-\delta,$ one has
    \begin{equation}
        \Vert A f\Vert_{L^2}\lesssim_{U} \sup_{|\alpha|+|\beta|\leq [\frac{n}{2}]+1} C_{\alpha,\beta,U}\Vert f\Vert_{L^2(M)},
    \end{equation}where
    $$ C_{\alpha,\beta,U}:=\sup_{(x,\xi)\in U\times\mathbb{R}^{n}}   (1+|\xi|)^{\rho|\alpha|-\delta|\beta|} |\partial_{x}^\beta\partial_{\xi}^\alpha a(x,\xi)|,$$ by making the identification of $U$ with an open subset of $\mathbb{R}^n.$
\end{remark}
In this work we will use the functional calculus for elliptic pseudo-differential operators on $M.$ We record that a pseudo-differential operator $A\in \Psi^m_{\rho,\delta}(M)$ is  elliptic if  in any local coordinate system $U$, there exists $R>0,$ such that its symbol satisfies uniformly on any compact subset $K\subset{U}$ the growth estimate
\begin{equation}\label{Ellipticity}
   C_1 (1+|\xi|)^m\leq   |a(x,\xi)|\leq C_2 (1+|\xi|)^m, \, |\xi|\geq R,
\end{equation}uniformly in $x\in K,$ and $\xi\in \mathbb{R}^n.$ The ellipticity condition \eqref{Ellipticity} is not enough for constructing the complex powers of an elliptic operator. We present such a construction in the following sub-section. However, one of the main aspects of the spectral theory of an elliptic operator is that it spectrum is purely discrete  \cite{Hormander1985III}. 
\subsection{Complex powers of an elliptic pseudo-differential operator} 

In this subsection  to a complex sector 
$$  \Lambda\subset\mathbb{ C}, $$
we will associate a class class of elliptic operators $\Psi^m_{\rho,\delta}(M;\Lambda)$.  In the 
applications this will, as a rule, be an angle with the vertex at point $z_0\in \mathbb{C}$.  For this we will follow Shubin \cite{Shubin1987}. For a closed manifold $M$ we require the conditions $\rho\geq 1-\delta,$ and $0\leq \delta<\rho\leq 1.$  We define these classes as follows. For an open subset $U\subset \mathbb{R}^n,$ let $a(x,\theta,\lambda)$ be a function on $U\times \mathbb{R}^n\times\Lambda.$ For $m\in \mathbb{R},$ and $d_\Lambda>,0$ we say that $a$ belongs to the class $$S^m_{\rho,\delta}(U\times \mathbb{R}^n; \Lambda)=S^m_{\rho,\delta,d_\Lambda}(U\times \mathbb{R}^n; \Lambda)$$ if the following conditions are satisfied,
\begin{itemize}
    \item $\forall\lambda_0\in \Lambda,\, a(x,\theta,\lambda_0)\in C^\infty(U\times \mathbb{R}^n).$
    \item For every compact subset $K\subset U$ and for all $\alpha,\beta\in \mathbb{N}_0^n$, the symbol inequalities
\begin{equation*}
  |\partial_{x}^\beta\partial_{\xi}^\alpha a(x,\theta,\lambda)|\leqslant C_{\alpha,\beta,K}(1+|\theta|+|\lambda|^{\frac{1}{d_\Lambda}})^{m-\rho|\alpha|+\delta|\beta|},
\end{equation*} hold true uniformly in $x\in K,$ for all  $\theta\in \mathbb{R}^n,$ and all $\lambda\in \Lambda.$ 
\end{itemize} By simplicity of the notation we have omitted the dependence of these classes with respect to the parameter $d_\Lambda.$

Then, to any $a\in S^m_{\rho,\delta}(U\times \mathbb{R}^n; \Lambda)$ we can associate a pseudo-differential operator $A_\lambda,$ depending on the parameter $\lambda\in \Lambda,$ such that 
\begin{equation*}
    A_\lambda f(x)=\smallint\limits_{\mathbb{R}^n}e^{2\pi i x\cdot \theta}a(x,\theta,\lambda)(\mathscr{F}_{\mathbb{R}^n}{f})(\theta)d\theta, \,f\in C^\infty_0(\mathbb{R}^n).
\end{equation*} 
In this case we will write  $A_\lambda\in \Psi^m_{\rho,\delta}(U;\Lambda).$ Now, to make the extension of this definition to the setting of compact manifolds without boundary, we will say that a family $A_\lambda,$ $\lambda\in \Lambda,$ belongs to the class $\Psi^m_{\rho,\delta}(M;\Lambda)$ if for any $\lambda\in \Lambda,$
 the linear continuous operator $A_\lambda:C^\infty_0(M)\rightarrow C^\infty(M) $ is a pseudo-differential operator of order $m$ of $(\rho,\delta)$-type, with $ \rho\geqslant   1-\delta, $ and $0\leq \delta<\rho\leq 1,$ and if for every local  coordinate patch $\omega: M_{\omega}\subset M\rightarrow U_{\omega}\subset \mathbb{R}^n,$
and for every $\phi,\psi\in C^\infty_0(U_\omega),$ the operator
\begin{equation*}
    T_\lambda u:=\psi(\omega^{-1})^*A_\lambda\omega^{*}(\phi u),\,\,u\in C^\infty(U_\omega),
\end{equation*} is a  pseudo-differential operator with symbol $a\in S^m_{\rho,\delta}(U_\omega\times \mathbb{R}^n;\Lambda).$ Generalising the notion of ellipticity in \ref{Ellipticity} we will consider the following notion of parameterellipticity. We say that an operator $A\in \Psi^m_{\rho,\delta}(M, \Lambda)$ is parameter-elliptic of order $m,$ if in any  local coordinate system $U,$ and for any compact subset $K\subset{U},$ its symbol satisfies the inequality 
\begin{equation}\label{Parameter:Ellipticity}
   C_1 (1+|\theta|+|\lambda|^{\frac{1}{d_\Lambda}})^m\leq   |a(x,\theta)|\leq C_2 (1+|\theta|+|\lambda|^{\frac{1}{d_\Lambda}})^m,  |\theta|+|\lambda|\geq R,
\end{equation}for some $R>0,$ uniformly in $x\in K,$ and $\theta\in \mathbb{R}^n.$  Now, we present one of the main applications of the notion of parameter-ellipticity. For $R>0,$ we denote
\begin{equation}
    \Lambda_R:=\{z\in \mathbb{C}:|z|\geq R\}\cap \Lambda.
\end{equation}
\begin{theorem}
 Let $A_\lambda\in \Psi^m_{\rho,\delta}(M,\Lambda)$ be parameter-elliptic of order $m.$ Let $s\in \mathbb{R}.$ Then, there exists $R>0,$ such that for any $|\lambda|\geq R,$ $A_\lambda:H^s(M)\rightarrow H^s(M)$ is an invertible operator and $A_{\lambda}^{-1}\in \Psi^{-m}_{\rho,\delta}(M,\Lambda_R).$
\end{theorem}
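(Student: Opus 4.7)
The plan is to implement the classical parametrix construction, adapted to the parameter-dependent calculus, and then invert via a Neumann series once the remainder is made small in $\lambda$. First I would work locally: fix a coordinate patch $U\subset\mathbb{R}^n$ and let $a(x,\theta,\lambda)$ be the local symbol of $A_\lambda$. From the parameter-ellipticity inequality \eqref{Parameter:Ellipticity}, the function
\begin{equation*}
  b_0(x,\theta,\lambda):=\chi(x,\theta,\lambda)\,a(x,\theta,\lambda)^{-1},
\end{equation*}
where $\chi$ is a smooth cutoff equal to $1$ on $\{|\theta|+|\lambda|\geq 2R\}$ and supported in $\{|\theta|+|\lambda|\geq R\}$, is well defined and satisfies, by a straightforward induction using the Leibniz rule on $a\cdot a^{-1}=1$, the symbol estimates
\begin{equation*}
  |\partial_x^\beta\partial_\theta^\alpha b_0(x,\theta,\lambda)|\leq C_{\alpha,\beta,K}(1+|\theta|+|\lambda|^{1/d_\Lambda})^{-m-\rho|\alpha|+\delta|\beta|},
\end{equation*}
so that $b_0\in S^{-m}_{\rho,\delta}(U\times\mathbb{R}^n;\Lambda)$. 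Patching this construction together with a smooth partition of unity subordinate to a coordinate atlas of $M$ yields a first approximation $B_\lambda^{(0)}\in\Psi^{-m}_{\rho,\delta}(M;\Lambda)$.

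Next I would iterate. The composition formula for the calculus $\Psi^{\infty}_{\rho,\delta}(M;\Lambda)$ (which is stable under products, just as in Theorem \ref{calculus}) gives an asymptotic expansion
\begin{equation*}
  A_\lambda\circ B_\lambda^{(0)}=I-R_\lambda^{(0)},\qquad R_\lambda^{(0)}\in \Psi^{-(\rho-\delta)}_{\rho,\delta}(M;\Lambda).
\end{equation*}
Setting $B_\lambda^{(N)}=B_\lambda^{(0)}\bigl(I+R_\lambda^{(0)}+\cdots+(R_\lambda^{(0)})^N\bigr)$ and using Borel summation to define an honest symbol $b(x,\theta,\lambda)\sim\sum_{j\geq 0} b_j$ with $b_j\in S^{-m-j(\rho-\delta)}_{\rho,\delta}(U\times\mathbb{R}^n;\Lambda)$, I obtain $B_\lambda\in \Psi^{-m}_{\rho,\delta}(M;\Lambda)$ with
\begin{equation*}
  A_\lambda B_\lambda = I - S_\lambda,\qquad S_\lambda\in \bigcap_{k\in\mathbb{N}}\Psi^{-k}_{\rho,\delta}(M;\Lambda).
\end{equation*}
An analogous left parametrix gives $B_\lambda' A_\lambda= I- S_\lambda'$ with $S_\lambda'$ of the same type, and the standard argument shows $B_\lambda=B_\lambda'$ modulo $\Psi^{-\infty}(M;\Lambda)$.

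The final step, and the part I expect to require the most care, is to upgrade this formal inversion to an actual inversion on $H^s(M)$ for $|\lambda|$ large. The key observation is that $S_\lambda$ has a symbol in $S^{-\infty}(M;\Lambda)$, meaning in particular that for every integer $k\geq 0$,
\begin{equation*}
  |\partial_x^\beta\partial_\theta^\alpha s(x,\theta,\lambda)|\lesssim_{\alpha,\beta,k,K}(1+|\theta|+|\lambda|^{1/d_\Lambda})^{-k-\rho|\alpha|+\delta|\beta|}.
\end{equation*}
Applying the Calder\'on--Vaillancourt type estimate of Remark \ref{remark:CVTh} after microlocalisation (summing the finitely many seminorms with $|\alpha|+|\beta|\leq[n/2]+1$), one obtains
\begin{equation*}
  \Vert S_\lambda\Vert_{\mathscr{B}(H^s(M))}\lesssim (1+|\lambda|^{1/d_\Lambda})^{-1}\xrightarrow[|\lambda|\to\infty]{}0,
\end{equation*}
where the gain in $\lambda$ comes precisely from the parameter-dependent symbol estimate with $k=1$. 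Hence there is $R>0$ such that for $\lambda\in\Lambda_R$ the operator $I-S_\lambda$ is invertible on $H^s(M)$ by Neumann series, and $(I-S_\lambda)^{-1}=I+T_\lambda$ with $T_\lambda\in\Psi^{-\infty}(M;\Lambda_R)$. Defining $A_\lambda^{-1}:=B_\lambda(I-S_\lambda)^{-1}=B_\lambda+B_\lambda T_\lambda$ and using that the calculus is closed under composition (Theorem \ref{calculus}) gives $A_\lambda^{-1}\in\Psi^{-m}_{\rho,\delta}(M;\Lambda_R)$. The left parametrix argument ensures that this is a genuine two-sided inverse on $H^s(M)$, finishing the proof.
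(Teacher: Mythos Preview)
The paper does not include its own proof of this theorem; it is stated as a background result in Appendix~I, with the surrounding discussion referring to Shubin~\cite{Shubin1987} for the construction of complex powers and the parameter-dependent calculus. Your argument---parametrix construction in the parameter-dependent classes, asymptotic iteration to a smoothing remainder, and Neumann-series inversion using the $\lambda$-decay of that remainder---is exactly the standard proof one finds in Shubin, Chapter~II, so it is correct and matches the intended reference.

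Two small technical remarks. First, the Calder\'on--Vaillancourt estimate you invoke is stated for $L^2$; to obtain $\Vert S_\lambda\Vert_{\mathscr{B}(H^s)}\to 0$ for general $s$ you should conjugate by an elliptic operator of order $s$ (e.g.\ $(1-\Delta_g)^{s/2}$) and observe that the conjugate of $S_\lambda$ is still in $\Psi^{-\infty}(M;\Lambda)$ with the same parameter decay. Second, the claim that $T_\lambda=(I-S_\lambda)^{-1}-I$ lies in $\Psi^{-\infty}(M;\Lambda_R)$ deserves one more line: write $T_\lambda=S_\lambda(I-S_\lambda)^{-1}$, note that $(I-S_\lambda)^{-1}$ is bounded on every $H^s$ uniformly in $\lambda\in\Lambda_R$, and use that $S_\lambda\in\Psi^{-\infty}(M;\Lambda)$ absorbs this bounded factor while preserving the symbol decay in $(\theta,\lambda)$. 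Both points are routine and do not affect the validity of your outline.
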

The main example of elliptic operators depending on a parameter is the resolvent of a elliptic pseudo-differential operator as we will discuss in the following remark.
\begin{remark}[The resolvent of an elliptic operator]Let us consider a Sobolev space $H^s(M).$ Let $A\in \Psi^m_{\rho,\delta}(M)$ be an elliptic pseudo-differential operator of  order $m.$ Define
\begin{equation}
    A_\lambda:=A-\lambda I,
\end{equation}where $\lambda$ belongs to  $\Lambda,$ that is a closed angle in $\mathbb{C}$ with vertex at $0.$ 

If the principal symbol $a_m(x,\xi)$ of $A$ does not take values in $\Lambda$ for all $\xi\neq 0,$ then $A_\lambda\in \Psi^m_{\rho,\delta}(M,\Lambda)$ is parameter-elliptic of order $m.$ So, there is $R=R_A>0,$ such that for $|\lambda|\geq R,$ $$A_\lambda:H^s(M)\rightarrow H^s(M) $$ is an invertible operator and $A_\lambda^{-1}\in \Psi^{-m}_{\rho,\delta}(M,\Lambda_R).$
\end{remark}
Now, we present the construction of a holomorphic family of operators $A^z,$ $z\in \mathbb{C},$ and to this family we will called the complex powers of the elliptic pseudo-differential operator $A,$ see Shubin  \cite[Chapter II]{Shubin1987}.
\begin{theorem}[Complex powers of elliptic operators]\label{Complex:Powers:Th}
Let $\varepsilon_0\geq 0.$ Let $A\in \Psi^m_{\rho,\delta}(M)$ be an elliptic pseudo-differential operator of order $m>0,$ and let $a_{m}(x,\xi)$ be its principal symbol. Assume that
\begin{equation}
    a_{m}(x,\xi)-\lambda\neq 0,\, \lambda\in (-\infty,-\varepsilon_0),\,\,  \sigma(A)\cap (-\infty,-\varepsilon_0]=\emptyset,
\end{equation} that is the ray $L_{\varepsilon_0}:=(-\infty,-\varepsilon_0]$ belongs to the resolvent set of $A.$

 Then, for some closed angle $\Lambda(\varepsilon_0)$ with vertex at $-\varepsilon_0$ and containing $L_0$ in its interior, 
\begin{equation}
    a_{m}(x,\xi)-\lambda\neq 0,\, \lambda\in \Lambda(\varepsilon_0),\, \sigma(A)\cap \Lambda(\varepsilon_0) =\emptyset.
\end{equation} Moreover, for $\lambda\in \Lambda(\varepsilon_0),$ $A_\lambda:=A-\lambda I,$  is invertible on $H^s(M),$ for any $s\in \mathbb{R},$ parameter-elliptic of order $m,$  $A_{\lambda}\in \Psi^m_{\rho,\delta}(M,\Lambda(\varepsilon_0)),$ $A_{\lambda}^{-1}\in \Psi^{-m}_{\rho,\delta}(M,\Lambda(\varepsilon_0)),$ and the mapping
\begin{equation}\label{Contour:Integral}
    z\in \mathbb{C}\mapsto A^z:=-\frac{1}{2\pi i}\smallint\limits_{\partial \Lambda(\varepsilon_0)}\lambda^z (A-\lambda I)^{-1}d\lambda\in \Psi^{\textnormal{Re}(z)m}_{\rho,\delta}(M),
\end{equation}is a holomorphic function from $\mathbb{C}$ into the family of bounded operators $\mathscr{B}(H^s(M))$ when $\textnormal{Re}(z)\leq 0,$ and for   $\textnormal{Re}(z)>0 ,$ it is a  holomorphic function from $\mathbb{C}$ into the family of bounded operators $\mathscr{B}(H^s(M), H^{s-m\textnormal{Re}(z)}(M)).$
\end{theorem}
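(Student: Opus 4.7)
The plan is to follow the classical Seeley--Shubin construction of complex powers, adapted to the $(\rho,\delta)$-calculus, in four stages: opening the ray $L_{\varepsilon_0}$ into a closed sector $\Lambda(\varepsilon_0)$, checking parameter-ellipticity of $A-\lambda I$, realizing $(A-\lambda I)^{-1}$ inside $\Psi^{-m}_{\rho,\delta}(M,\Lambda(\varepsilon_0))$, and analyzing the contour integral \eqref{Contour:Integral}.

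To open the ray I would exploit two facts. First, the spectrum of an elliptic operator of positive order on a closed manifold is discrete with only infinity as an accumulation point, so that $\sigma(A)\cap L_{\varepsilon_0}=\emptyset$ is stable under small angular perturbations near any compact portion of $L_{\varepsilon_0}$. Second, compactness of the unit cosphere bundle $S^*M$, combined with the homogeneity of $a_m$ in $\xi$ and the hypothesis $a_m(x,\xi)\neq \lambda$ for $\lambda\in L_{\varepsilon_0}$ and $\xi\neq 0$, yields the quantitative separation $|a_m(x,\xi)-\lambda|\gtrsim |\xi|^m$ on $S^*M\times L_{\varepsilon_0}$. Continuity of $a_m$ in a finite atlas then allows this separation to persist in a small conic neighborhood $\Lambda(\varepsilon_0)\supset L_0$ with vertex at $-\varepsilon_0$, and the first assertion of the theorem follows.

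For parameter-ellipticity I would verify the two-sided bound
\[
|a_m(x,\xi)-\lambda|\asymp |\xi|^m+|\lambda|
\]
on each compact coordinate patch (taking $d_\Lambda=m$), by splitting into the regimes $|\xi|^m\gtrsim |\lambda|$, handled by ellipticity of $A$, and $|\xi|^m\lesssim |\lambda|$, handled by the sector separation established above. The full parameter-dependent symbol estimates on $\partial_x^\beta\partial_\xi^\alpha(a(x,\xi)-\lambda)$ then follow from those of $a$. A standard parametrix construction inside the calculus produces $B_\lambda\in \Psi^{-m}_{\rho,\delta}(M,\Lambda(\varepsilon_0))$ with $A_\lambda B_\lambda=I+R_\lambda$ and $R_\lambda$ smoothing with operator-norm decay $O(|\lambda|^{-N})$ for every $N$; for $|\lambda|$ large a Neumann series upgrades $B_\lambda$ to the true inverse, and on the remaining compact part of $\Lambda(\varepsilon_0)$ a standard perturbation argument, together with discreteness of $\sigma(A)$, keeps $A_\lambda^{-1}$ in the class.

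Finally, the resolvent bound $\Vert (A-\lambda I)^{-1}\Vert_{\mathscr{B}(H^s)}\lesssim (1+|\lambda|)^{-1}$ makes the integrand in \eqref{Contour:Integral} integrable along $\partial\Lambda(\varepsilon_0)$ whenever $\mathrm{Re}(z)<0$, yielding a holomorphic map into $\mathscr{B}(H^s(M))$ by differentiation under the integral sign. To pass to $\mathrm{Re}(z)>0$ I would use the consistency relation $A^{z}=A^k\, A^{z-k}$ for $k\in\mathbb{N}$ with $k>\mathrm{Re}(z)$, and read off the improved mapping into $\mathscr{B}(H^s(M),H^{s-m\,\mathrm{Re}(z)}(M))$ from the pseudo-differential order of $A^k$. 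I expect the main obstacle to be the symbolic expansion of $A^z$ itself: one must show that termwise integration of $\lambda^z$ against $b(x,\xi,\lambda)\sim \sum_{k\geq 0} b_{-m-k}(x,\xi,\lambda)$ produces a bona fide $S^{m\,\mathrm{Re}(z)}_{\rho,\delta}$-symbol, which requires uniform control of parameter-dependent seminorms in the $(\rho,\delta)$-regime, a careful contour deformation that tames the growth of $\lambda^z$ for large $|\mathrm{Im}(z)|$, and (for $\varepsilon_0=0$) a fixed branch of $\lambda^z$ compatible with the cut along $\partial\Lambda(\varepsilon_0)$.
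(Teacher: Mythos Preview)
The paper does not prove this theorem. It is stated in Appendix~I as a background result, with the surrounding text referring the reader to Shubin~\cite[Chapter~II]{Shubin1987} for the construction; there is no argument to compare against. Your sketch is essentially the classical Seeley--Shubin construction that the paper is citing, so in that sense it is aligned with the source the paper relies on, but you should be aware that you are supplying a proof where the paper simply quotes the literature.
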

\begin{corollary}[Inverse of positive pseudo-differential operators]\label{The:Inverse:Theorem} Let Let $A\in \Psi^m_{\rho,\delta}(M)$ be a positive elliptic pseudo-differential operator of order  $m>0,$ and let $a_{m}(x,\xi)$ be its principal symbol. Assume that
\begin{equation}
    a_{m}(x,\xi)-\lambda\neq 0,\, \lambda\in (-\infty,-\varepsilon_0).
\end{equation}Define $A^z$ via the contour integral \eqref{Contour:Integral}. Let $E_0:=\textnormal{Ker}(A)$ and let $E_0'$ be its orthogonal complement in $L^2(M).$ Then,
\begin{itemize}
    \item $A^z (E_0)=\{0\},$ and $A^z(E_0')\subset E_0',$ for any $z\in \mathbb{C},$ such that $\textnormal{Re}(z)<0.$
    \item For any $z,w\in \mathbb{C}$ such that $\textnormal{Re}(z), \textnormal{Re}(w)<0,$ $A^{z+w}=A^zA^w.$ 
    \item If $P_0: L^2(M)\rightarrow E_0$ is the orthogonal projection on the subspace $E_0,$ then $A^0=I-P_0.$
    \item For any $k\in \mathbb{N},$ $k\geq 1,$ the operator  
    $$ A^{-k}:=-\frac{1}{2\pi i}\smallint\limits_{\partial \Lambda(\varepsilon_0)}\lambda^{-k} (A-\lambda I)^{-1}d\lambda , $$ is the inverse of the operator $A^k$ on $E_0'$ and equals to the null operator on $E_0.$ Summarising, $A^{-k}A^k=I-P_0.$
\end{itemize}

\end{corollary}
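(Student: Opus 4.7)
The plan is to exploit the spectral theory of $A$. Since $A$ is positive, elliptic, self-adjoint on $L^2(M)$ and has compact resolvent, there is an orthogonal decomposition $L^2(M)=E_0\oplus E_0'$, where $E_0=\textnormal{Ker}(A)$ is the (finite-dimensional) eigenspace for $\mu=0$ and $E_0'=\overline{\textnormal{span}\{\rho_j:\mu_j>0\}}$. My idea is to reduce the operator-valued contour integral \eqref{Contour:Integral} to a scalar one by evaluating it on an eigenfunction $\rho_j$: since $(A-\lambda I)^{-1}\rho_j=(\mu_j-\lambda)^{-1}\rho_j$, one has $A^z\rho_j=c_j(z)\rho_j$ with
\[
c_j(z):=-\frac{1}{2\pi i}\smallint_{\partial\Lambda(\varepsilon_0)}\frac{\lambda^z}{\mu_j-\lambda}\,d\lambda,
\]
and the four conclusions reduce to identifying $c_j(z)$.

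To evaluate $c_j(z)$ I would first work in the half-plane $\textnormal{Re}(z)<0$ (where the integrand is absolutely integrable along $\partial\Lambda(\varepsilon_0)$) and fix the branch of $\lambda^z$ whose cut lies inside $\Lambda(\varepsilon_0)$, so that $\lambda\mapsto\lambda^z$ is holomorphic on a neighbourhood of the closed exterior of the sector. For $\mu_j>0$, the only singularity of the integrand in that exterior is the simple pole at $\lambda=\mu_j$ with residue $-\mu_j^z$, and $|\lambda^z(\mu_j-\lambda)^{-1}|=O(|\lambda|^{\textnormal{Re}(z)-1})$, so closing the contour by an arc at infinity yields $c_j(z)=\mu_j^z$ via the residue theorem. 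For $\mu_j=0$, the integrand simplifies to $-\lambda^{z-1}$, which is holomorphic on the exterior of $\Lambda(\varepsilon_0)$ and decays at infinity, so the same closing of the contour gives $c_j(z)=0$.

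From these formulas items (i) and (ii) are immediate for $\textnormal{Re}(z)<0$: $A^z$ kills $E_0$ and multiplies each $\rho_j\in E_0'$ by $\mu_j^z$, so $A^z(E_0)=\{0\}$ and $A^z(E_0')\subseteq E_0'$; and the identity $A^{z+w}\rho_j=\mu_j^{z+w}\rho_j=A^z A^w\rho_j$ on the orthonormal basis $\{\rho_j\}$ extends to all of $L^2(M)$ by the $L^2$-boundedness of $A^z,A^w$ recorded in Theorem \ref{Complex:Powers:Th}. For items (iii) and (iv) I will use analytic continuation: both $z\mapsto A^z$ and the diagonal family $D_z$ defined by $D_z\rho_j=\mu_j^z\rho_j$ for $\mu_j>0$ and $D_z\rho_j=0$ otherwise are holomorphic in $z$ between the appropriate Sobolev spaces, and they agree on the half-plane $\textnormal{Re}(z)<0$; hence $A^z=D_z$ for every $z\in\mathbb{C}$. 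Specialising at $z=0$ gives $A^0=I-P_0$, and at $z=-k$ combined with $A^k\rho_j=\mu_j^k\rho_j$ one obtains $A^{-k}A^k=I-P_0$ directly from the eigenvalue identity $\mu_j^{-k}\mu_j^k=1$ (for $\mu_j>0$), together with the vanishing on $E_0$.

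The main obstacle I anticipate is the careful contour analysis: choosing a branch of $\lambda^z$ single-valued and holomorphic on a neighbourhood of the exterior of $\Lambda(\varepsilon_0)$, and controlling the arc at infinity (which forces $\textnormal{Re}(z)<0$ in the direct residue computation and makes the analytic continuation step indispensable for items (iii) and (iv)). The remaining steps are a routine application of the residue theorem together with the holomorphy of the family $z\mapsto A^z$ furnished by Theorem \ref{Complex:Powers:Th}.
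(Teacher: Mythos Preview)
The paper does not supply its own proof of this corollary: it is stated as a consequence of Theorem~\ref{Complex:Powers:Th} and attributed to Shubin \cite[Chapter~II]{Shubin1987}. Your proposal is precisely the standard argument one finds there: diagonalise the contour integral on the eigenbasis of the positive self-adjoint operator with compact resolvent, compute the resulting scalar Cauchy integrals by the residue theorem, and propagate from $\textnormal{Re}(z)<0$ to the full plane by the holomorphy of $z\mapsto A^z$ provided by Theorem~\ref{Complex:Powers:Th}. The identification $c_j(z)=\mu_j^z$ for $\mu_j>0$ and $c_j(z)=0$ for $\mu_j=0$ is correct, and the four items follow exactly as you describe.

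Two small points worth tightening. First, for $\mu_j=0$ the integrand $-\lambda^{z-1}$ has its branch point at $\lambda=0$, which lies \emph{outside} the sector $\Lambda(\varepsilon_0)$ (vertex at $-\varepsilon_0<0$); you should place the cut along a ray inside the sector so that $\lambda^{z-1}$ is single-valued on the exterior region over which you close, and then the vanishing of $c_j(z)$ really does follow from Cauchy's theorem plus decay on the arc. Second, item (iv) concerns $z=-k$ with $\textnormal{Re}(z)<0$, so no analytic continuation is needed there; it is only item (iii) at $z=0$ that genuinely requires the continuation step, since the contour integral is not absolutely convergent at $z=0$. With these clarifications your argument is complete and matches the reference.
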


\subsection{Ruzhansky-Turunen classes of pseudo-differential operators on the torus}\label{periodicclasses}
Let us consider the torus $\mathbb{T}^n\cong \mathbb{R}^n/\mathbb{Z}^n.$
In order to construct our periodization approach for the proof of the spectral inequalities, we will use the Ruzhanksy-Turunen global calculus on the torus. 

We will use the standard notation for this family of periodic pseudo-differential operators taken from \cite{Ruz}. The discrete Schwartz space $\mathcal{S}(\mathbb{Z}^n)$ denote the space of discrete functions $\phi:\mathbb{Z}^n\rightarrow \mathbb{C}$ verifying the estimate 
 \begin{equation}
 \forall M\in\mathbb{R}, \exists C_{M}>0,\, |\phi(\xi)|\leq C_{M}\langle \xi \rangle^M,
 \end{equation}
where $\langle \xi \rangle=(1+|\xi|^2)^{\frac{1}{2}},$ denotes the Japanese bracket of $\xi.$ The periodic Fourier transform is defined for any $u\in C^{\infty}(\mathbb{T}^n)$ by $$ \widehat{u}(\xi)=\smallint\limits_{\mathbb{T}^n}e^{-i2\pi\langle x,\xi\rangle}u(x)dx,\,\,\xi\in\mathbb{Z}^n.$$ Here, $dx$ stands for the normalised Haar measure on the torus. The Fourier inversion formula is given by $$ u(x)=\sum_{\xi\in \mathbb{Z}^n}e^{i2\pi\langle x,\xi \rangle }\widehat{u}(\xi),\,\, x\in\mathbb{T}^n. $$ The periodic H\"ormander class $S^m_{\rho,\delta}(\mathbb{T}^n\times \mathbb{R}^n), \,\, 0\leq \rho,\delta\leq 1,$ consists of those functions $a(x,\xi)$ which are smooth in $(x,\xi)\in \mathbb{T}^n\times \mathbb{R}^n$ and which satisfy toroidal symbols inequalities
\begin{equation}
|\partial^{\beta}_{x}\partial^{\alpha}_{\xi}a(x,\xi)|\leq C_{\alpha,\beta}\langle \xi \rangle^{m-\rho|\alpha|+\delta|\beta|}.
\end{equation}
Note that symbols in $S^m_{\rho,\delta}(\mathbb{T}^n\times \mathbb{R}^n)$ are symbols in $S^m_{\rho,\delta}(\mathbb{R}^n\times \mathbb{R}^n)$ (see \cite{Ruz}) of order $m$ which are 1-periodic in $x.$ Then, if $a(x,\xi)\in S^{m}_{\rho,\delta}(\mathbb{T}^n\times \mathbb{R}^n),$ the corresponding pseudo-differential operator is defined by the quantisation formula
\begin{equation}
a(X,D_{x})f(x)=\smallint\limits_{\mathbb{T}^n}\smallint\limits_{\mathbb{R}^n}e^{i2\pi\langle x-y,\xi \rangle}a(x,\xi)f(y)d\xi dy.
\end{equation}
The class $S^m_{\rho,\delta}(\mathbb{T}^n\times \mathbb{Z}^n),\, 0\leq \rho,\delta\leq 1,$ consists of  those functions $a(x, \xi)$ which are smooth in $x\in \mathbb{T}^n,$  for all $\xi\in\mathbb{Z}^n$ and which satisfy the symbol inequalities

\begin{equation}
\forall \alpha,\beta\in\mathbb{N}^n,\exists C_{\alpha,\beta}>0,\,\, |\Delta^{\alpha}_{\xi}\partial^{\beta}_{x}a(x,\xi)|\leq C_{\alpha,\beta}\langle \xi \rangle^{m-\rho|\alpha|+\delta|\beta|}.
 \end{equation}

The operator $\Delta$ is the standard difference operator defined in $\mathbb{Z}^n,$ \cite{Ruz}. The toroidal operator with symbol $a(x,\xi)$ is defined as
\begin{equation}
a(x,D)u(x)=\sum_{\xi\in\mathbb{Z}^n}e^{i 2\pi\langle x,\xi\rangle}a(x,\xi)\widehat{u}(\xi),\,\, u\in C^{\infty}(\mathbb{T}^n).
\end{equation}
\begin{remark}
  The corresponding class of operators with symbols in $S^m_{\rho,\delta}(\mathbb{T}^n\times \mathbb{Z}^n)$ (resp. $S^m_{\rho,\delta}(\mathbb{T}^n\times \mathbb{R}^n)$) will be denoted by $\Psi^m_{\rho,\delta}(\mathbb{T}^n\times \mathbb{Z}^n),$ (resp. $\Psi^m_{\rho,\delta}(\mathbb{T}^n\times \mathbb{R}^n)$).
\end{remark}
There exists a process to interpolate the second argument of symbols on $\mathbb{T}^n\times \mathbb{Z}^n$ in a smooth way to get a symbol defined on $\mathbb{T}^n\times \mathbb{R}^n.$ It leads to the following equivalence-of-classes-theorem.  
\begin{theorem}\label{eq}
Let $(\rho,\delta)\in [0,1]^2$ be such that $0\leq \delta \leq 1,$ $0< \rho\leq 1.$ Then, the symbol $a\in S^m_{\rho,\delta}(\mathbb{T}^n\times \mathbb{Z}^n)$ if only if there exists  an Euclidean symbol $a'\in S^m_{\rho,\delta}(\mathbb{T}^n\times \mathbb{R}^n)$ such that $a=a'|_{\mathbb{T}^n\times \mathbb{Z}^n}.$ Moreover, we have 
\begin{equation}\label{equivalence:of:classes}
    \Psi^m_{\rho,\delta}(\mathbb{T}^n\times \mathbb{Z}^n)=\Psi^m_{\rho,\delta}(\mathbb{T}^n\times \mathbb{R}^n) .
\end{equation}
Moreover, any $A \in\Psi^0_{\rho,\delta}(\mathbb{T}^n\times \mathbb{Z}^n) $ is bounded on  $L^2(\mathbb{T}^n),$ and 
\begin{equation}\label{Delgado:Ruzhansky:lp}
    \Vert A\Vert_{\mathscr{B}(L^2)}\lesssim \sup_{|\alpha|+|\beta|\leq [n/2]+1}  \sup_{(x,\xi)}\langle \xi \rangle^{\rho|\alpha|-\delta|\beta|}|\Delta^{\alpha}_{\xi}\partial^{\beta}_{x}a(x,\xi)|.
\end{equation}
\end{theorem}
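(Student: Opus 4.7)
The plan is to establish the theorem in three stages: first, the equivalence between toroidal and Euclidean symbol classes at the level of symbols; second, the resulting identity of operator classes on periodic distributions; and third, the quantitative $L^{2}$-boundedness estimate controlled by a fixed finite number of seminorms.

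For the symbol-class equivalence, the ``restriction'' direction is straightforward: if $a'\in S^{m}_{\rho,\delta}(\mathbb{T}^{n}\times\mathbb{R}^{n})$, then setting $a=a'|_{\mathbb{T}^{n}\times\mathbb{Z}^{n}}$ and writing each finite difference $\Delta_{\xi}^{\alpha}$ as an iterated integral of $\partial_{\xi}^{\alpha}a'$ over a unit hypercube yields the toroidal inequalities
\[
|\Delta_{\xi}^{\alpha}\partial_{x}^{\beta}a(x,\xi)|\lesssim \langle \xi\rangle^{m-\rho|\alpha|+\delta|\beta|},
\]
since $\langle\xi+t\rangle\asymp\langle\xi\rangle$ uniformly for $|t|\le|\alpha|$. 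For the converse direction, given $a\in S^{m}_{\rho,\delta}(\mathbb{T}^{n}\times\mathbb{Z}^{n})$, I would fix once and for all an interpolation kernel $\theta\in \mathcal{S}(\mathbb{R}^{n})$ satisfying $\theta(0)=1$ and $\theta(\eta)=0$ for $\eta\in\mathbb{Z}^{n}\setminus\{0\}$ (for instance a tensor product of one-dimensional band-limited sinc-type functions) and define
\[
a'(x,\xi):=\sum_{\eta\in\mathbb{Z}^{n}}\theta(\xi-\eta)\,a(x,\eta),\qquad \xi\in\mathbb{R}^{n},
\]
so that $a'|_{\mathbb{T}^{n}\times\mathbb{Z}^{n}}=a$. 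Symbol estimates for $a'$ follow by termwise differentiation: the $\partial_{\xi}^{\alpha}$ derivatives fall on $\theta$, yielding a rapidly decaying kernel, and splitting the sum into the regimes $|\xi-\eta|\le\tfrac{1}{2}|\xi|$ and its complement controls $\sum_{\eta}\langle\xi-\eta\rangle^{-N}\langle\eta\rangle^{m-\rho|\alpha|+\delta|\beta|}$ by $\langle\xi\rangle^{m-\rho|\alpha|+\delta|\beta|}$ with constants depending only on finitely many toroidal seminorms of $a$.

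For the identity of operator classes I would invoke the Poisson summation formula: for any periodic $u\in C^{\infty}(\mathbb{T}^{n})$, the Euclidean action $a'(X,D_{x})u$ and the toroidal action $a(x,D)u$ coincide, because expanding $u$ in a Fourier series and inserting the definition of the Euclidean quantization produces a sum that collapses via Poisson summation to $\sum_{\xi\in\mathbb{Z}^{n}}e^{i2\pi\langle x,\xi\rangle}a(x,\xi)\widehat{u}(\xi)$. Together with the symbol-class equivalence just sketched, this gives the operator identity \eqref{equivalence:of:classes}. For the final $L^{2}$-bound, I would apply the sharp Calder\'on--Vaillancourt theorem (as recalled in Remark \ref{remark:CVTh}) to the Euclidean representative $a'$ of a given $A\in\Psi^{0}_{\rho,\delta}(\mathbb{T}^{n}\times\mathbb{Z}^{n})$, which yields the estimate of $\|A\|_{\mathscr{B}(L^{2})}$ in terms of $\sup_{|\alpha|+|\beta|\le [n/2]+1}\sup_{(x,\xi)}\langle\xi\rangle^{\rho|\alpha|-\delta|\beta|}|\partial_{\xi}^{\alpha}\partial_{x}^{\beta}a'(x,\xi)|$. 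The remaining task, and the main obstacle, is to re-express these Euclidean seminorms of $a'$ in terms of the toroidal seminorms of $a$ appearing on the right-hand side of \eqref{Delgado:Ruzhansky:lp}: using the explicit interpolation formula and a discrete Taylor expansion that rewrites values $a(x,\eta)$ at nearby integers as iterated difference operators $\Delta_{\xi}^{\alpha'}a$ with $|\alpha'|\le|\alpha|$, one obtains the desired bound. The bookkeeping must be arranged so that only $|\alpha|+|\beta|\le [n/2]+1$ toroidal derivatives intervene, preserving the sharp multiplicity inherited from Calder\'on--Vaillancourt; this is the delicate step that I expect to require the most care.
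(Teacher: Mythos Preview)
Your proposal is a correct outline of the standard argument, and in fact it is considerably more detailed than what the paper does: the paper's ``proof'' of this theorem consists entirely of citations, deferring the equivalence \eqref{equivalence:of:classes} to Ruzhansky--Turunen \cite{Ruz} and the $L^{2}$-estimate \eqref{Delgado:Ruzhansky:lp} to Delgado--Ruzhansky \cite{DRLp}. The interpolation-by-$\theta$ construction you describe is precisely the mechanism used in \cite{Ruz} to extend toroidal symbols to Euclidean ones, and your Poisson-summation argument for the operator identity is likewise the standard one there. So there is no genuine methodological difference to compare; you have simply unpacked what the paper leaves as a black box.

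One small caution on the last step: you propose to obtain \eqref{Delgado:Ruzhansky:lp} by applying the Euclidean Calder\'on--Vaillancourt bound to the extension $a'$ and then controlling the Euclidean seminorms of $a'$ by toroidal seminorms of $a$ of the \emph{same} total order $[n/2]+1$. This bookkeeping is indeed delicate, as you acknowledge, and in \cite{DRLp} the sharp count of seminorms is obtained by a direct argument on the torus (via a Sobolev-type kernel estimate and almost-orthogonality) rather than by passing through the Euclidean extension. Your route would work but may a priori cost a few extra derivatives unless the interpolation kernel $\theta$ is chosen carefully; if you want exactly $[n/2]+1$ on the right-hand side, it is cleaner to follow the intrinsic toroidal argument of \cite{DRLp}.
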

\begin{proof} The proof of  \eqref{equivalence:of:classes} can be found in \cite{Ruz}. The proof of the $L^2$-estimate in \eqref{Delgado:Ruzhansky:lp} can be found in \cite{DRLp}. 
\end{proof}

\subsection{Null-controllability of diffusion problems on Hilbert spaces}
In this section we present a functional analysis tool that provides the null controllability of fractional problems in the setting of  Hilbert spaces. We use the following notation: 
\begin{itemize}
    \item  the norm of a Hilbert space $H$ will be denoted by $\Vert\cdot \Vert$ without using  subscript.
\end{itemize}

Let $H$ be a separable Hilbert space and let $\mathcal{A}$ be a positive self-adjoint operator with dense domain $\textnormal{Dom}(\mathcal{A})\subset{H}.$ In what follows, any Hilbert space will be identified with its topological dual in the canonical way.  Consider $H_1$ the Hilbert space obtained by choosing on the domain  $\textnormal{Dom}(\mathcal{A})$ the graph norm. We extend $\{e^{-t\mathcal{A}}:t>0\}$ to a semigroup on the dual space $H_{1}^{*}.$ Let $S$ be an observation operator from  $H$ to a Hilbert space of inputs $U,$ and let us consider the control operator $B\in \mathscr{B}(U, H_1^*)$ be its adjoint. Assume the following properties on $S$ and $B.$
\begin{assumption}\label{Assumption} We assume that $B\in \mathscr{B}(U, H_1^*)$ and that $S$ are such that, for some $T>0$ (and hence, for any $T>0$), the following estimates hold.
\begin{itemize}
    \item There exists $K_T>0,$ such that
    \begin{equation}\label{Hip:1}
        \forall v_0\in \textnormal{Dom}(\mathcal{A}),\, \, \smallint\limits_0^T\Vert Se^{-t\mathcal{A}}v_0\Vert^2\leq K_T\Vert v_0 \Vert^2.
    \end{equation}
    \item 
    \begin{equation}\label{Hip:2}
       \forall u\in L^2_\textnormal{loc}(\mathbb{R},U),\,\Vert \smallint\limits_0^Te^{-t\mathcal{A} } Bu(t)\Vert^2dt\leq K_T\smallint\limits_0^T\Vert u(t)\Vert^2dt. 
    \end{equation}
\end{itemize}
\end{assumption}
Assumption \ref{Assumption} identifies the necessary hypotheses for the existence and  uniqueness of the solution of the model
\begin{equation}\label{Model:Hilbert}
    \phi_t+\mathcal{A}\phi=Bu,\phi(0)=\phi_0\in {H},\, u\in L^2_{\textnormal{loc}}(\mathbb{R},U).
\end{equation}We precise this in the following result.
\begin{proposition} Under the hypothesis \eqref{Hip:1} and \eqref{Hip:2}, for any input $u\in L^2_{\textnormal{loc}}(\mathbb{R},U),$ there exists a unique solution $u\in C(\mathbb{R}^+_0,U)$ to \eqref{Model:Hilbert} such that
\begin{equation}\label{Solution:Hilbert}
    \phi(t)=e^{-t \mathcal{A}  }\phi_0+\smallint\limits_0^T e^{(s-t)}Bu(s)ds.
\end{equation}
\end{proposition}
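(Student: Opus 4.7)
The plan is to reduce the existence/uniqueness claim to the standard semigroup machinery, with the admissibility hypotheses \eqref{Hip:1}--\eqref{Hip:2} serving only to upgrade the a priori $H_1^*$-valued integral into a genuine $H$-valued continuous trajectory. First I would record that, since $\mathcal{A}$ is a positive self-adjoint operator with dense domain on the separable Hilbert space $H$, the operator $-\mathcal{A}$ generates a (contraction, analytic) $C_0$-semigroup $\{e^{-t\mathcal{A}}\}_{t\geq 0}$ on $H$, and by duality also on $H_1^\ast$. In particular, $t\mapsto e^{-t\mathcal{A}}\phi_0$ is continuous from $[0,\infty)$ into $H$ for every $\phi_0\in H$, giving the free-evolution part of the representation formula for free.

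Next, I would define the candidate mild solution by the Duhamel formula
\begin{equation*}
\phi(t)\;:=\;e^{-t\mathcal{A}}\phi_0+\int_0^t e^{(s-t)\mathcal{A}}Bu(s)\,ds,\qquad t\geq 0,
\end{equation*}
where for each fixed $t>0$ the integrand is initially interpreted in $H_1^\ast$ via $B\in\mathscr{B}(U,H_1^\ast)$ and the dual semigroup. The key step is to promote this integral to an element of $H$: by a change of variables this is precisely the operator
\begin{equation*}
\Phi_t u\;:=\;\int_0^t e^{-(t-s)\mathcal{A}}Bu(s)\,ds,
\end{equation*}
which is the control map associated with $(\mathcal{A},B)$. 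Hypothesis \eqref{Hip:2} asserts exactly that $\Phi_t$ extends to a bounded operator from $L^2(0,t;U)$ into $H$ with norm controlled by $K_T^{1/2}$ uniformly for $t\in[0,T]$. Hence $\phi(t)\in H$ for every $t\geq 0$, with the quantitative bound
\begin{equation*}
\|\phi(t)\|\;\leq\;\|\phi_0\|+K_T^{1/2}\|u\|_{L^2(0,T;U)}.
\end{equation*}

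To obtain continuity $\phi\in C([0,\infty);H)$ I would exploit the strong continuity of $\{e^{-t\mathcal{A}}\}$ on $H$ together with the uniform admissibility bound: for $0\leq t_1<t_2\leq T$ one splits
\begin{equation*}
\phi(t_2)-\phi(t_1)\;=\;\bigl(e^{-t_2\mathcal{A}}-e^{-t_1\mathcal{A}}\bigr)\phi_0+\bigl(e^{-(t_2-t_1)\mathcal{A}}-I\bigr)\Phi_{t_1}u+\int_{t_1}^{t_2}e^{-(t_2-s)\mathcal{A}}Bu(s)\,ds,
\end{equation*}
and bounds the third term by $K_T^{1/2}\|u\|_{L^2(t_1,t_2;U)}$, which vanishes as $t_2-t_1\to 0$; the first two terms vanish by strong continuity applied to the fixed $H$-vectors $\phi_0$ and $\Phi_{t_1}u$. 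Then I would verify in the mild/weak sense that $\phi$ solves \eqref{Model:Hilbert}: testing against $\psi\in\mathrm{Dom}(\mathcal{A})$ one checks $\tfrac{d}{dt}\langle\phi(t),\psi\rangle=-\langle\phi(t),\mathcal{A}\psi\rangle+\langle u(t),B^*\psi\rangle_U$ by differentiating the Duhamel representation, using that $B^*\psi\in U$ is well defined since $\psi\in H_1$.

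Finally, uniqueness reduces to showing that if $\phi_0=0$ and $u\equiv 0$ then $\phi\equiv 0$, which is immediate from the standard energy identity $\tfrac{d}{dt}\|\phi(t)\|^2=-2\langle\mathcal{A}^{1/2}\phi(t),\mathcal{A}^{1/2}\phi(t)\rangle\leq 0$ and $\phi(0)=0$; linearity then takes care of the general case. The main subtlety, and the only place where real work is done, is the passage from the $H_1^\ast$-valued integrand to an $H$-valued trajectory: this is not automatic for $B\in\mathscr{B}(U,H_1^\ast)$ and is exactly what the admissibility hypothesis \eqref{Hip:2} is designed to secure, while \eqref{Hip:1} is its observation-side dual, used implicitly through the closed-graph/duality pairing when one wants the representation to be valid for every initial datum $\phi_0\in H$ rather than only for $\phi_0\in\mathrm{Dom}(\mathcal{A})$.
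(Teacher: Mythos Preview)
Your argument is correct and is the standard one from abstract semigroup/control theory: existence of the mild solution via Duhamel, the admissibility hypothesis \eqref{Hip:2} promoting the a priori $H_1^\ast$-valued controlled term to an $H$-valued one with the right continuity, and uniqueness by the energy identity. There is nothing to compare against, however, because the paper does not supply a proof of this proposition at all: it is stated as a preliminary fact in the appendix on null-controllability on Hilbert spaces, without argument or reference, and is used only to set up the framework for quoting Miller's theorem. You have also silently repaired several evident typos in the statement (the missing $\mathcal{A}$ in the exponential, the upper limit $T$ in place of $t$, and the target space $U$ in place of $H$ for the solution $\phi$), which is the right call.
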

We precise the notion of null-controllability in the following definition.
\begin{definition}\label{defi}The model \eqref{Model:Hilbert} is null-controllable in time $T>0,$ if for any initial state $\phi_0\in H,$ there exists an input function $u\in L^2_{\textnormal{loc}}(\mathbb{R}^+_0,H)$ such that its solution \eqref{Solution:Hilbert} satisfies $\phi(T)=0.$ 
\end{definition}
\begin{remark} Let us consider the adjoint model  to  \eqref{Model:Hilbert}  without the source term, that is
\begin{equation}
    v_t+ \mathcal{A}v=0.
\end{equation}
By duality, the null-controllability of \eqref{Model:Hilbert}  is equivalent to the following observability inequality: there exists $C_T>0,$ such that 
\begin{equation}
    \forall v_0\in H, \, \Vert e^{-T\mathcal{A}  }v_0\Vert\leq C_T \Vert S e^{-t \mathcal{A}  }v_0\Vert_{L^2((0,T),U)}.
\end{equation}
The smallest constant $C_T>0$ is called the cost of controllability in time $T>0$. Note that by the duality argument, the cost of controllability in time $T>0,$ is the smallest constant $C_T>0$ satisfying that
\begin{equation}
    \forall \phi_0\in H,\exists u \textnormal{ in Definition \eqref{defi} such that } \Vert u \Vert_{L^2((0,T),U)}\leq C_T\Vert \phi_0\Vert.
\end{equation}
\end{remark}    
The following theorem implies that a spectral inequality for the power $\mathcal{A}^\gamma$ defined by the functional calculus of the operator $\mathcal{A},$ is a sufficient condition for the null-controllability of the 
model \eqref{Model:Hilbert}.
We give the precise statement in the following way. Here, $E_\lambda^{\mathcal{A}}:=E(0,\lambda):H\rightarrow H$ denotes the spectral measure of the positive operator $\mathcal{A}.$
\begin{theorem}[Miller \cite{Miller2006}]\label{Miller:Theorem} Assume that for some $\gamma\in (0,1),$ the fractional operator $\mathcal{A}^\gamma$ satisfies the spectral inequality
\begin{equation}\label{Espectral:Inequality:Hilbert}
    \forall\lambda>0,\,\forall u\in E_\lambda^{ \mathcal{A}^{\gamma}}( H),\exists d_1,d_2>0,\,\,\, \Vert  v\Vert\leq d_1e^{d_2 \lambda}\Vert  Sv\Vert.
\end{equation}Then, the problem \eqref{Model:Hilbert} is null-controllable in time $T>0.$ Moreover,  the controllability cost $C_T$ over short times $T,$ satisfies the inequality
\begin{equation}
    \forall \beta>\frac{\gamma}{1-\gamma},\,\exists C_1,C_2,\, \forall T\in (0,1),\, C_T\leq C_1 e^{C_2T^{-\beta}}.
\end{equation}
\end{theorem}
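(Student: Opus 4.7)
My plan is to follow the classical Lebeau--Robbiano--Miller strategy. The first step is duality: under Assumption \ref{Assumption}, the Hilbert Uniqueness Method identifies null-controllability of \eqref{Model:Hilbert} in time $T$ with the existence of a constant $C_T>0$ such that the observability inequality
\[
\|e^{-T\mathcal{A}}v_0\|^2 \;\leq\; C_T^{2}\smallint_0^T \|S e^{-t\mathcal{A}}v_0\|^2\,dt, \qquad v_0\in H,
\]
holds, the smallest such $C_T$ being exactly the controllability cost. Hence both the qualitative null-controllability statement and the quantitative short-time bound are reformulated as producing this observability inequality with an explicit growth $C_T\lesssim e^{C_2 T^{-\beta}}$ as $T\to 0^+$.

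\textbf{Low/high frequency splitting.} Let $\Pi_\lambda:=E_\lambda^{\mathcal{A}^\gamma}$ and $\Pi_\lambda^{\perp}:=I-\Pi_\lambda$. Because $\mathcal{A}^\gamma$ and $\mathcal{A}$ share the same spectral resolution, $\Pi_\lambda$ commutes with $e^{-t\mathcal{A}}$ and $e^{-t\mathcal{A}}\Pi_\lambda v_0\in \Pi_\lambda(H)$ for every $t\geq 0$. Applying the hypothesis \eqref{Espectral:Inequality:Hilbert} to this vector and using the contractivity of $e^{-s\mathcal{A}}$ on $H$, an integration over a subinterval $[a,b]$ gives the \emph{low-frequency observation estimate}
\[
\|e^{-b\mathcal{A}}\Pi_\lambda v_0\|^2 \;\leq\; \frac{d_1^2\, e^{2d_2\lambda}}{b-a}\smallint_a^b \|S e^{-t\mathcal{A}}\Pi_\lambda v_0\|^2\,dt.
\]
On the complementary subspace, the functional calculus yields the \emph{dissipation estimate}
\[
\|e^{-t\mathcal{A}}\Pi_\lambda^{\perp} v_0\| \;\leq\; e^{-t\lambda^{1/\gamma}}\|\Pi_\lambda^{\perp} v_0\|, \qquad t\geq 0,
\]
since on $\Pi_\lambda^{\perp}(H)$ one has $\mathcal{A}\geq \lambda^{1/\gamma}$.

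\textbf{Dyadic telescoping scheme.} Fix a base frequency $\mu>0$ and dyadic scales $\lambda_k:=\mu\, 2^{k}$, and choose a decreasing sequence $T=s_0>s_1>s_2>\cdots$ with $s_k\to T/2$ and slab widths $\tau_k:=s_k-s_{k+1}$ of order $2^{-k\sigma}\tau_0$ for a suitable exponent $\sigma>0$ and $\tau_0\asymp T$. On each slab $[s_{k+1},s_k]$ I would split the time in two halves: on the first half apply the low-frequency estimate above at scale $\lambda_k$, paying a factor $(d_1^2/\tau_k)\,e^{2d_2\lambda_k}$; on the second half invoke the dissipation estimate to shrink the $\Pi_{\lambda_k}^{\perp}$ part by $\exp(-c\,\tau_k\,\lambda_k^{1/\gamma})$. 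Because $\gamma\in(0,1)$ gives $1/\gamma>1$, the dissipation exponent $\tau_k\lambda_k^{1/\gamma}\asymp 2^{k(1/\gamma-\sigma)}\mu^{1/\gamma}\tau_0$ dominates the cost exponent $2d_2\lambda_k\asymp 2^{k}\mu$ as soon as $\sigma<1/\gamma-1$. Telescoping the resulting bounds and summing the geometric remainder produces
\[
\|e^{-T\mathcal{A}}v_0\|^2 \;\leq\; \Big(\sum_{k\geq 0}\tfrac{d_1^2 e^{2d_2\lambda_k}}{\tau_k}e^{-c\sum_{j<k}\tau_j\lambda_j^{1/\gamma}}\Big)\smallint_0^T\|Se^{-t\mathcal{A}}v_0\|^2dt,
\]
whose prefactor is finite, yielding observability in time $T$, hence null-controllability.

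\textbf{Short-time cost and main obstacle.} To get $C_T\leq C_1 e^{C_2 T^{-\beta}}$ for any $\beta>\gamma/(1-\gamma)$, I would optimize $\mu$ as a negative power of $T$: take $\mu\asymp T^{-\gamma/(1-\gamma)-\varepsilon}$ for small $\varepsilon>0$. The delicate step is the parameter balance that pins down $\beta$: one must simultaneously ensure that (i) the iterative series converges, i.e.\ the dissipation term at scale $k+1$ eats the cost $e^{2d_2\lambda_k}$ of scale $k$; (ii) the telescoped prefactor is of order $e^{C_2\mu}$; and (iii) $\mu$ is tuned to match the prescribed short-time exponent, producing a total cost $e^{C_2\mu}\asymp e^{C_2 T^{-\beta}}$ with $\beta$ approaching $\gamma/(1-\gamma)$ from above as $\varepsilon\to 0^+$. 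The main analytic difficulty is precisely this coupling between $\sigma$, $\mu=\mu(T)$, and $\varepsilon$: making the series telescope requires $\sigma<1/\gamma-1$, but smaller $\sigma$ makes the $1/\tau_k$ factors blow up, so the optimization has to be sharp. Carrying out this bookkeeping yields the required cost bound and completes the proof of Theorem \ref{Miller:Theorem}.
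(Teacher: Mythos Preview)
The paper does not prove this theorem at all: it is stated in Appendix~I as a quoted result due to Miller~\cite{Miller2006} and is used as a black box in the proof of Theorem~\ref{Main:theorem:statement} and Corollary~\ref{Cost:Control}. So there is no ``paper's own proof'' to compare against; what you have written is a sketch of Miller's original argument, not of anything in this paper.

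That said, your outline is the correct strategy and matches the Lebeau--Robbiano--Miller iteration. One genuine gap in your write-up: in the telescoping step you silently replace $\smallint_a^b\|Se^{-t\mathcal{A}}\Pi_{\lambda_k}v_0\|^2dt$ by $\smallint_0^T\|Se^{-t\mathcal{A}}v_0\|^2dt$. These are not comparable without an extra ingredient, because $S$ need not commute with $\Pi_{\lambda_k}$. The standard fix is to invoke the admissibility bound \eqref{Hip:1} on the high-frequency tail: write $Se^{-t\mathcal{A}}\Pi_{\lambda_k}v_0 = Se^{-t\mathcal{A}}v_0 - Se^{-t\mathcal{A}}\Pi_{\lambda_k}^{\perp}v_0$ and control the second term on each slab by $K_{\tau_k}\|e^{-s_{k+1}\mathcal{A}}\Pi_{\lambda_k}^{\perp}v_0\|^2$, which the dissipation estimate then makes harmless. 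Without this, the displayed telescoped inequality does not follow from what precedes it. Once you insert that step, the parameter balance you describe (choosing $\sigma<1/\gamma-1$ and $\mu\asymp T^{-\beta}$ with $\beta>\gamma/(1-\gamma)$) is exactly what Miller does, and your conclusion is correct.
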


\section{Appendix II: Construction of the regularising function $\psi$}\label{Section:construction:psi}

This appendix is dedicated to the construction of the regularising function $\psi$ used in the proof of Proposition \ref{Lemma:LR;Ineq}.  For any $\varepsilon\in (0,1),$ let $a:= {3\varepsilon} /4.$
Define the function
\begin{equation}
    E(t)=\begin{cases}e^{-\frac{1}{a^2-t^2}}(a^2-t^2)^{10},& \text{ }t\in [0,a],
\\
0,& \text{ } t\in [a,\varepsilon].
\end{cases}
\end{equation}
By straightforward computation one can show that for any $t\in [0,a],$\\
\begin{itemize}
    \item  $E_t(t)=2 t \exp(-1/(a^2-t^2)) (a^2-t^2)^8(-10 a^2+10 t^2-1).$
    \\
    \item $E_{tt}(t)=-2 \exp(-1/(a^2-t^2)) (a^2-t^2)^6 (10 a^6+a^4 (1-210 t^2)+a^2 (390 t^4-38 t^2)-190 t^6+37 t^4-2 t^2).$
    \\
    \item  $E^{(3)}(t)= 4 t \exp(-1/(a^2-t^2)) (a^2-t^2)^4 (270 a^8+a^6 (54-2520 t^2)+a^4 (5940 t^4-594 t^2+3)-54 a^2 (100 t^6-19 t^4+t^2)+t^2 (1710 t^6-486 t^4+51 t^2-2)) . $
    \\
    \item $ E^{(4)}(t)=4 \exp(-1/(a^2-t^2)) (a^2-t^2)^2 (270 a^{12}-54 a^{10} (190 t^2-1)+3 a^8 (22470 t^4-954 t^2+1)-12 a^6 t^2 (14370 t^4-1581 t^2+25)+6 a^4 t^2 (35235 t^6-6714 t^4+371 t^2-2)-2 a^2 t^4 (62730 t^6-17415 t^4+1782 t^2-68)+t^4 (29070 t^8-10710 t^6+1635 t^4-124 t^2+4)).$
\end{itemize}
These explicit formulae, allow us to write the first fourth derivatives of $\psi$  in the form
\begin{equation}
    E^{(i)}(t)=\exp(-1/(a^2-t^2))(a^2-t^2)^{10-2i }P_{i}(t,a),\,i\in \{1,2,3,4\}.
\end{equation}where the functions $P_i(t,a)\in \mathbb{C}[t,a]$ are polynomials in two variables. By  evaluating the functions $E^{(i)},$ $i\in \{0,1,2,3,4\},$ at $t=0,$ we get
\vspace{0.1cm}
\begin{itemize}
    \item $E(0)=a^{20} e^{-1/a^2}.$
    
    \item $E'(0)=0.$
    
    \item $E^{(3)}(0)=0.$
    
    \item $E^{(4)}(0)=4a^{4}(270 a^{12}+54 a^{10}+3a^{8})e^{-1/a^2}.$
\end{itemize}
Now, consider the function
\begin{equation}
   \tilde{ \eta}(t):=E(t)(1-bt^2+ct^4), \,\, 0\leq t\leq a,
\end{equation}where $b$ and $c$ are real parameters. Then, straightforward computation shows that for
\begin{itemize}
    \item $b=(E^{(2)}(0)-E(0))/2.$
    \item $c=(6(E^{(2)}(0)-E(0))E^{(2)}(0)-E^{(4)}(0) )/12E(0),$
\end{itemize} the function $\tilde \eta$ satisfies the following properties
$$ \tilde \eta(0)=E(0),\, \eta^{(i)}(0)=0,\,\,i=1,2,3,4. $$
Let $T>0.$ The analysis above shows that the function
\begin{eqnarray}\label{function:psi}\psi(t):=
\begin{cases}E(0)\tilde{\eta}(t-T) ,& \text{ }t\in [T,T+a],
\\
E(0){\tilde \eta}(0),& \text{ } t\in [0,T]
,
\\
0,& \text{ } t\in [T+a,T+\varepsilon],
\end{cases}
\end{eqnarray}satisfies the required properties in the proof of  Proposition \ref{Lemma:LR;Ineq}. We summarise the analysis above and some their straightforward consequences in the following lemma.
\begin{lemma}\label{Lemma:fucntion:psi}
The function $\psi$ defined in \eqref{function:psi} satisfies the following properties. 
\begin{itemize}
    \item $0<\psi(0)<\varepsilon.$
    \item $\psi^{(i)}(T)=0,$ for $i\in {\{1,2,3,4\}  }.$
    \item For $i\in {1,2,3,4},$ $\psi^{(i)}\in C^{\infty}(0,T+\varepsilon),$ 
\end{itemize} and there is a constant $M_0>0,$ independent of $\varepsilon \in (0,1),$ such that
\begin{equation}
    \Vert\psi^{(i)} \Vert_{L^\infty}\leq M_0,
\end{equation}for all $i=1,2,3,4.$
\end{lemma}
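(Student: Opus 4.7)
The plan is to verify the four stated properties of $\psi$ by direct computation from the piecewise formula \eqref{function:psi} and the explicit construction of $E$, $\tilde\eta$, $b$, $c$ given in this appendix.

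Three of the four properties follow quickly. For the positivity and smallness, $\psi(0) = E(0)\tilde\eta(0) = E(0)^{2} = a^{40}e^{-2/a^2}$ with $a = 3\varepsilon/4 \in (0, 3/4)$; this is strictly positive and dominated by $a^{40}\leq a < \varepsilon$. For the vanishing of $\psi^{(i)}(T)$ with $i \in \{1,2,3,4\}$, observe that $\psi$ is constant on $[0, T]$, so all left derivatives at $T$ vanish, while $\psi^{(i)}(T^+) = E(0)\,\tilde\eta^{(i)}(0) = 0$ by the design of $b$ and $c$ (this is exactly what the calculation preceding the statement produces). For the regularity, $\psi$ is smooth on each of the three open subintervals; at $t = T+a$ the Gaussian-type factor $e^{-1/(a^2-(t-T)^2)}$ and all its derivatives vanish, yielding $C^\infty$-matching with the zero piece, and at $t = T$ the matching up to order four gives the required regularity.

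The main obstacle is the uniform $L^\infty$-bound $\|\psi^{(i)}\|_{L^\infty} \leq M_{0}$ with $M_0$ independent of $\varepsilon \in (0,1)$. The delicate point is that $c$ has $E(0) = a^{20}e^{-1/a^2}$ in its denominator, which is super-exponentially small as $\varepsilon \to 0$, so $|c|$ blows up. The saving cancellation is the outer factor $E(0)$ in the definition of $\psi$, producing
\[
E(0)\cdot c \;=\; \frac{1}{12}\bigl[6(E^{(2)}(0) - E(0))E^{(2)}(0) - E^{(4)}(0)\bigr],
\]
a bounded universal quantity in the derivatives of $E$ at zero. To exploit this, I would apply Leibniz to
\[
\psi^{(i)}(t) \;=\; E(0)\,\frac{d^i}{ds^i}\bigl[E(s)(1 - bs^2 + cs^4)\bigr]\bigg|_{s=t-T}, \qquad s \in [0, a],
\]
and bound each resulting term using: (i) $|E^{(j)}(s)| \leq C_j$ uniformly for $a \in (0,1)$ and $s \in [0, a]$, which follows from the representation $E^{(j)}(s) = e^{-1/(a^2-s^2)}(a^2-s^2)^{10-2j}P_j(s, a)$ with $P_j$ a polynomial of universal coefficients, together with the boundedness of $e^{-1/u}u^{10-2j}$ on $(0, 1)$; (ii) $|b|$ is bounded since both $E(0)$ and $E^{(2)}(0)$ are; (iii) $|E(0)\cdot c|$ is bounded by the formula above; (iv) $|s|^k \leq a^k \leq 1$. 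The worst term in the Leibniz expansion, $E(0)\,E(s)\,(cs^4)^{(4)} = 24\,(E(0)c)\,E(s)$, is manifestly bounded, and every other term is controlled by the same mechanism. Summing the finitely many terms yields the desired universal constant $M_0$.
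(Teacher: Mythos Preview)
Your proposal is correct and essentially matches the paper's approach. The paper does not give a separate proof of this lemma; it simply states that the lemma ``summarises the analysis above and some of their straightforward consequences,'' referring to the explicit formulas for $E^{(i)}$ and the choice of $b,c$. Your write-up fills in exactly those consequences, and in particular you identify the one genuinely non-obvious point the paper leaves implicit: that although $c$ has the super-exponentially small $E(0)$ in its denominator, the outer factor $E(0)$ in the definition of $\psi$ cancels it, so that only the bounded quantity $E(0)c$ ever appears in the Leibniz expansion of $\psi^{(i)}$. Your bounds (i)--(iv) are all correct; note in particular that $e^{-1/u}u^{10-2j}$ is increasing on $(0,1)$, hence bounded by $e^{-1}$ there, which makes (i) immediate.

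One minor remark: the lemma as stated asserts $\psi^{(i)}\in C^\infty(0,T+\varepsilon)$, but the construction only matches derivatives up to order four at $t=T$, so strictly speaking $\psi$ is only $C^4$ there. Your phrase ``gives the required regularity'' is the right way to read this --- what is actually used in the proof of Proposition~\ref{Lemma:LR;Ineq} is continuity of $\psi^{(i)}$ for $i\le 4$ and the uniform $L^\infty$ bound, both of which you establish.
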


\section{Appendix III: Computing the inverse of  $E(x,t,D,\partial_t)$ }
Let us consider the operator $E(x,D)\geq cI$ of Proposition \ref{Lemma:LR;Ineq}.
 We will analyse  the mapping properties of its inverse, when $E(x,D)$ is considered continuously acting as follows: $$E(x,t,D,\partial_t)=-\partial_t^2+E(x,D)^{\frac{2}{\nu}}:H^2(M\times \mathbb{T}_{T,\varepsilon})\rightarrow L^2(M\times \mathbb{T}_{T,\varepsilon}).$$ 

Note that we have embedded  the manifold $M_{T}$ (with lateral boundary $\partial M_{T}=(M\times \{0\})\cup (M\times \{T\})$) on the closed manifold $M\times \mathbb{T}_{T,\varepsilon}.$

 In view of our assumptions,  there exists  $\varepsilon_0>0,$ such that
 the principal symbol $E(x,\xi)$ of $E(x,D)$ and its spectrum $\sigma(A)$ satisfy
\begin{equation}
 \textnormal{(H): }   E(x,\xi)-\lambda\neq 0,\, \lambda\in (-\infty,-\varepsilon_0),\,\,  \sigma(E(x,D))\cap (-\infty,c/2]=\emptyset,
\end{equation} that is, the ray $L_{\varepsilon_0}:=(-\infty,-\varepsilon_0]$ is contained in the resolvent set of $E(x,D).$ 
 Note that the  principal symbol of $-\partial_t^2+E(x,D)^{2/\nu}$ is given by 
$$ \tau^2+E(x,\xi)^{2/\nu} ,$$ and 
satisfies that
\begin{equation}
 \textnormal{(H'): }   \tau^2+E(x,\xi)^{2/\nu}-\lambda\neq 0,\, \lambda\in (-\infty,-\varepsilon_0^{2/\nu}].
\end{equation} 
Moreover, the spectral mapping theorem implies that
$$ \sigma\left(-\partial_t^2+E(x,D)^{2/\nu}\right)\cap (-\infty,-\varepsilon_0^{2/\nu}]=\emptyset,  $$
that is the ray $$L_{\varepsilon_0^{2/\nu}}:=(-\infty,-\varepsilon_0^{2/\nu}]$$ is contained in the resolvent set of the operator $-\partial_t^2+E(x,D)^{2/\nu}\in \Psi^2_{1,\delta}(M).$ 

This analysis implies that $E(x,t,D,\partial_t):= -\partial_t^2+E(x,D)^{2/\nu}$ is parameter elliptic of order two with respect to a closed angle $\Lambda(\varepsilon_0^{2/\nu})$ of the complex plane with vertex at $z_0:=-\varepsilon_0^{2/\nu},$ and containing the ray $(-\infty,-\varepsilon_0^{2/\nu}].$ With the property  $$A_\lambda:=E(x,t,D,\partial_t)-\lambda I\in \Psi^2_{1,\delta}(M, \Lambda(\varepsilon_0^{2/\nu})),\,\,\,A_\lambda^{-1}\in \Psi^{-2}_{1,\delta}(M, \Lambda(\varepsilon_0^{2/\nu})),$$  in view of the existence-of-complex-powers  Theorem \ref{Complex:Powers:Th}, we have that 
\begin{equation}
  z\in \mathbb{C}\mapsto  G^z:= E(x,t,D,\partial_t)^{z},
\end{equation}is a holomorphic family of  pseudo-differential operators, that maps any $z\in \mathbb{C}$ into the class $\Psi^{2\textnormal{Re}(z)}_{\rho,\delta}(M\times \mathbb{T}_{T,\varepsilon}),$ where
\begin{equation}\label{Dunfors-Riesz}
   G^zf(x)=-\frac{1}{2\pi i}\smallint\limits_{\partial\Lambda\left(\varepsilon_0^{2/\nu}\right)}{\lambda^z}(E(x,t,D,\partial_t)-\lambda I)^{-1}f(x)d\lambda,\,\, f\in C^\infty(M\times \mathbb{T}_{T,\varepsilon}).
\end{equation}
In particular, with $z=-1,$ we have the inverse $G^{-1},$
\begin{equation}\label{Dunfors-Riesz:Inverse}
   G^{-1}f(x)=-\frac{1}{2\pi i}\smallint\limits_{\partial\Lambda\left(\varepsilon_0^{2/\nu}\right)}{\lambda^{-1}}(E(x,t,D,\partial_t)-\lambda I)^{-1}f(x)d\lambda,\,\, f\in C^\infty(M\times \mathbb{T}_{T,\varepsilon}),
\end{equation} of $E(x,t,D,\partial_t)$ on the orthogonal complement of its kernel. This is, if $P_0$ is the orthogonal projection on the subspace  $\textnormal{Ker}(E(x,t,D,\partial_t)),$ $G^{-1}G=I-P_0,$ see Corollary \ref{The:Inverse:Theorem}. In view of the lower bound $E(x,D)\geq cI,$ we deduce that $P_0$ is the null operator. Moreover, we have the following property.
\begin{proposition}\label{The:constant:B}
Let $0<\varepsilon<1,$ and let us consider the operator norm 
$$ B_\varepsilon=\Vert E(t,x,D,\partial_t)^{-1} \Vert_{\mathscr{B}(L^2(M\times \mathbb{T}_{T,\varepsilon}),  H^2(M\times \mathbb{T}_{T,\varepsilon}) )}.  $$
Then
\begin{equation}
    B:=\sup_{0<\varepsilon<1}B_\varepsilon \leq 1+1/c,
\end{equation}where $c>0$ in the constant is the positivity condition $E(x,D)\geq cI$ of Proposition \ref{Lemma:LR;Ineq}. 
\end{proposition}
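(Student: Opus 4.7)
The plan is to exploit the product structure of the closed manifold $M\times\mathbb{T}_{T,\varepsilon}$ together with the joint spectral decomposition of $E(x,t,D,\partial_t)=-\partial_t^2+E(x,D)^{2/\nu}$: since the spatial and temporal parts commute and act on disjoint factors, one can diagonalize simultaneously and read off all relevant bounds as elementary spectral inequalities, with the $\varepsilon$-dependence isolated in the torus part.

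To set up the decomposition, let $\{\rho_j\}_{j\geq 0}$ be the $L^2(M)$-orthonormal eigenbasis of $E(x,D)$ with eigenvalues $\mu_j=\lambda_j^\nu$. The hypothesis $E(x,D)\geq cI$ forces $\mu_j\geq c$, and by the spectral mapping theorem $E(x,D)^{2/\nu}\rho_j=\mu_j^{2/\nu}\rho_j$ with $\mu_j^{2/\nu}\geq c^{2/\nu}$. Let $\{\phi_k^\varepsilon\}_{k\in\mathbb{Z}}$ be the orthonormal basis of $L^2(\mathbb{T}_{T,\varepsilon})$ diagonalizing $-\partial_t^2$, with eigenvalues $\tau_k^\varepsilon=(\pi k/(T+\varepsilon))^2\geq 0$. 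Then $\{\rho_j\otimes\phi_k^\varepsilon\}_{j,k}$ is an orthonormal eigenbasis of $L^2(M\times\mathbb{T}_{T,\varepsilon})$ for $E(x,t,D,\partial_t)$ with eigenvalues $\Lambda_{j,k}=\mu_j^{2/\nu}+\tau_k^\varepsilon\geq c^{2/\nu}>0$; in particular the operator is invertible on $L^2(M\times\mathbb{T}_{T,\varepsilon})$ with $\Vert E(x,t,D,\partial_t)^{-1}\Vert_{\mathscr{B}(L^2)}\leq c^{-2/\nu}$, uniformly in $\varepsilon$.

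Next, expand $f=\sum_{j,k}c_{j,k}\,\rho_j\otimes\phi_k^\varepsilon$ and $u:=E(x,t,D,\partial_t)^{-1}f=\sum_{j,k}(c_{j,k}/\Lambda_{j,k})\,\rho_j\otimes\phi_k^\varepsilon$, and bound each of the five summands appearing in the definition of the $H^2$-norm from Section~\ref{SectionofProofs}. The temporal contributions $\Vert\partial_t^\alpha u\Vert^2_{L^2}=\sum_{j,k}(\tau_k^\varepsilon)^\alpha|c_{j,k}|^2/\Lambda_{j,k}^2$ for $\alpha=0,1,2$ are each controlled by a constant depending only on $c$ and $\nu$ times $\Vert f\Vert^2_{L^2}$, using the elementary inequalities $\tau_k^\varepsilon\leq\Lambda_{j,k}$ and $\Lambda_{j,k}\geq c^{2/\nu}$. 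For the spatial contributions $\Vert(1-\Delta_g)^{s/2}u\Vert^2_{L^2}$ with $s=1,2$, the plan is to first apply the Calder\'on--Vaillancourt theorem (Theorem~\ref{calculus}) to the zero-order pseudo-differential operator $(1-\Delta_g)^{s/2}(1+E(x,D)^{2/\nu})^{-s/2}\in\Psi^0_{1,\delta}(M)$, whose $L^2(M)$ operator norm depends only on finitely many seminorms of the symbol of $E(x,D)$ and is in particular independent of $\varepsilon$, since the operator acts on the factor $M$ alone. This reduces matters to estimating $\Vert(1+E(x,D)^{2/\nu})^{s/2}u\Vert^2_{L^2}=\sum_{j,k}(1+\mu_j^{2/\nu})^s|c_{j,k}|^2/\Lambda_{j,k}^2$, which using $\Lambda_{j,k}\geq\mu_j^{2/\nu}\geq c^{2/\nu}$ is bounded by $D(c,\nu)\Vert f\Vert^2_{L^2}$. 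Summing the five pieces yields $\Vert u\Vert_{H^2}\leq B\,\Vert f\Vert_{L^2}$ with $B$ depending only on $c,\nu,M$ and $T$.

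The main obstacle is to verify that the composition $(1-\Delta_g)^{s/2}(1+E(x,D)^{2/\nu})^{-s/2}$ is a genuine element of $\Psi^0_{1,\delta}(M)$: this requires the complex powers construction of Theorem~\ref{Complex:Powers:Th} to define $(1+E(x,D)^{2/\nu})^{-s/2}$ as a pseudo-differential operator of order $-s$, combined with the composition calculus of Theorem~\ref{calculus}, which relies crucially on the invariance hypothesis $\rho\geq 1-\delta$. Once this is in place, the resulting Calder\'on--Vaillancourt bound is purely a property of the fixed symbol of $E(x,D)$ on $M$; the $\varepsilon$-uniformity then follows, giving the claimed finiteness of $B=\sup_{\varepsilon\in(0,1)}B_\varepsilon$ with an explicit bound of the form $1+c^{-2/\nu}$ up to a multiplicative constant depending on the symbol of $E(x,D)$.
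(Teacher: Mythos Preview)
Your proof is correct and follows essentially the same strategy as the paper: diagonalise $E(x,t,D,\partial_t)$ via the tensor product eigenbasis $\{\rho_j\otimes\phi_k^\varepsilon\}$ and reduce the $H^2\!\to\! L^2$ bound to elementary inequalities on the eigenvalues $\Lambda_{j,k}=\mu_j^{2/\nu}+\tau_k^\varepsilon\geq c^{2/\nu}$. The only real difference is the representative chosen for the $H^2$ norm on the closed manifold $M\times\mathbb{T}_{T,\varepsilon}$.

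You work with the Laplacian-based norm (involving $\partial_t^j$ and $(1-\Delta_g)^{j/2}$), which forces the extra Calder\'on--Vaillancourt step comparing $(1-\Delta_g)^{s/2}$ to $(1+E(x,D)^{2/\nu})^{s/2}$; this is correct and, as you note, $\varepsilon$-independent because the operator lives purely on the factor $M$. The paper instead takes the equivalent norm $\Vert(I+E(x,t,D,\partial_t))u\Vert_{L^2}$ directly, so the whole argument collapses to the single-line estimate
\[
\frac{(1+\Lambda_{j,k})^2}{\Lambda_{j,k}^2}\leq\Bigl(1+\tfrac{1}{\inf_{j,k}\Lambda_{j,k}}\Bigr)^2,
\]
which yields the stated explicit constant with no pseudo-differential machinery. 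Your route buys an honest treatment of the norm equivalence (and in particular of its $\varepsilon$-uniformity), at the cost of a Calder\'on--Vaillancourt constant multiplying the final bound; the paper's route is shorter but tacitly assumes the reader accepts the operator-adapted Sobolev norm on the closed product manifold.
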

\begin{proof}Let us consider the orthogonal basis of $L^2(\mathbb{T}_{T,\varepsilon})$ formed by the exponential functions
\begin{equation}
 t\in \mathbb{T}_{T,\varepsilon}\mapsto   \tilde{e}^\varepsilon_{k}(t)=\exp\left(\frac{2\pi i t k}{2(T+\varepsilon)}\right),\,\, k\in \mathbb{Z},
\end{equation}and let us consider the $L^2$-normalised system of $2(T+\varepsilon)$-periodic eigenfunctions $${e}^\varepsilon_{k}:=\tilde{e}^\varepsilon_{k}/\sqrt{2(T+\varepsilon)},$$ of the Laplacian $-\partial_t^2.$ The corresponding eigenvalues of $-\partial_t^2$ are given by  
$$ \mu_{k,\varepsilon}=\frac{4\pi^2 k^2}{4(T+\varepsilon)^2}=\left(\frac{\pi k}{T+\varepsilon}\right)^2,\,k\in\mathbb{Z}.   $$ Since $\{{e}^\varepsilon_{k}\otimes \rho_j\}$ is a basis for $L^2(M\times \mathbb{T}_{T,\varepsilon} )$ the spectrum of the operator $-\partial_t^2+E(x,D)^{\frac{2}{\nu}}$ is determined by the sequence
$$   \mu_{k,\varepsilon}+\lambda_j^2= \left(\frac{\pi k}{T+\varepsilon}\right)^2+\lambda_j^2,\,\,k\in \mathbb{Z},\,j\in \mathbb{N}_0.$$
Since $E(x,D)\geq cI,$  we have the eigenvalue inequality $\lambda_k\geq c,$ and then for any $f\in L^2(M\times \mathbb{T}_{T,\varepsilon} )$ we have that
\begin{align*}
    \Vert E(t,x,D,\partial_t)^{-1} f\Vert_{H^2}^2 &=\left\Vert \sum_{k,j} ( \mu_{k,\varepsilon}+\lambda_j^2)^{-1}(f, {e}^\varepsilon_{k}\otimes \rho_j){e}^\varepsilon_{k}\otimes \rho_j\right\Vert_{
    H^2}^2\\
    & =\sum_{k,j} (1+ \mu_{k,\varepsilon}+\lambda_j^2)^2( \mu_{k,\varepsilon}+\lambda_j^2)^{-2}|(f, {e}^\varepsilon_{k}\otimes \rho_j)|^2\\
    & \leq \left(\frac{1}{c}+1\right)^2\sum_{k,j} |(f, {e}^\varepsilon_{k}\otimes \rho_j)|^2=\left(\frac{1}{c}+1\right)^2\Vert f \Vert_{L^2}^2.
\end{align*}From the previous analysis we deduce that $B\leq 1+\frac{1}{c}$ as desired.
\end{proof}

\section{Appendix IV:  $L^2$-theory of the operator $(-\partial_t^2-\Delta_g)E(x,t,D,\partial_t)^{-1}$ and the Calder\'on-Vaillancourt theorem}
The Calder\'on-Vaillancourth theorem is a sharp $L^2$-estimate for pseudo-differential operators. We have applied it in the proof of our spectral inequality. Indeed,  let us consider the operator $E(x,D)\geq cI$ of Proposition \ref{Lemma:LR;Ineq}. Observe that $$ (-\partial_t^2-\Delta_g)\in \Psi^{2}_{1,0}(M\times \mathbb{T}_{T,\varepsilon}),$$ belongs to the H\"ormander class of order 2 and the inverse  $$ E(x,t,D,\partial_t)^{-1}\in \Psi^{-2}_{1,\delta}(M\times \mathbb{T}_{T,\varepsilon}).$$  The pseudo-differential calculus implies that
$$ F(x,t,D,\partial_t):=(-\partial_t^2-\Delta_g)E(x,t,D,\partial_t)^{-1}\in \Psi^0_{1,\delta}(M\times \mathbb{T}_{T,\varepsilon}) .$$
The celebrated Calder\'on-Vaillancourt theorem implies that $F(x,t,D,\partial_t)$ is a bounded operator on $L^2(M\times \mathbb{T}_{T,\varepsilon}).$ Note that in any local coordinate system $U\subset M,$ where $U$ then, is diffeomorphic to an open subset $\tilde{U}\subset\mathbb{R}^n,$ the operator 
has the form
\begin{equation}
    F(x,t,D,\partial_t)u(t,x)=\sum_{k\in \mathbb{Z}^n}\smallint\limits_{\mathbb{R}^n}e^{\frac{i2\pi (t,k)}{2(T+\varepsilon)}+2\pi(x,\xi)}\sigma(t,x,k,\xi)\widehat{u}(k,\xi)d\xi,
\end{equation}where, for any $u\in C^\infty_0(\mathbb{T}_{T,\varepsilon}\times \tilde{U}),$ the Fourier transform of $u$ is defined by
\begin{equation}
    \widehat{u}(k,\xi)=\smallint\limits_{\mathbb{T}_{T,\varepsilon}}\smallint\limits_{\mathbb{R}^n}e^{-\frac{i2\pi (t,k)}{2(T+\varepsilon)}-2\pi(x,\xi)}{u}(t,x)dtdx,\,k\in \mathbb{Z},\,\xi\in \mathbb{R}^n.
\end{equation}
For the toroidal variable, we have used the toroidal Ruzhansky-Turunen calculus, see Subsection \ref{periodicclasses} or  \cite{Ruz} for details. From now, $\Delta_k$ denotes the difference operator on a lattice.  
Note that the principal symbol of the operator $  F(x,t,D,\partial_t)$ viewed on $ \tilde{U}$ is given by
\begin{equation}
    \sigma_0(t,x,k,\xi)=\frac{\left(\frac{\pi k}{T+\varepsilon}\right)^2+4\pi^2|\xi|^2}{\left(\frac{\pi k}{T+\varepsilon}\right)^2+E(x,\xi)^{\frac{2}{\nu}}},\,\,|\xi|\geq 1,
\end{equation}where $E(x,\xi)^{\frac{2}{\nu}}$ denotes the symbol of $E(x,D)^{\frac{2}{\nu}},$ when microlocalised to the open $\tilde{U}.$ Note that the ellipticity of $E(x,D)^{\frac{2}{\nu}},$ that is
$$ C_1|\xi|^2\leq E(x,\xi)^{\frac{2}{\nu}} \leq C_2|\xi|^2,\, |\xi|\geq 1,  $$
and the positivity condition $E(x,\xi)\geq c,$ implies that the symbol $\sigma_0$ satisfies the estimates
$$   \tilde{C}_1\leq | \sigma_0(t,x,k,\xi)|\leq \tilde{C}_2,$$ with $C_{1}$ and $C_2$ independent of $\varepsilon\in (0,1).$
Moreover, a detailed analysis of the derivatives of $\sigma_0,$ shows that the symbol inequalities
\begin{equation*}
  |\partial_{x,t}^\beta\partial_{\xi}^\alpha\Delta_k^{\gamma} \sigma_0(t, x,k,\xi)|\leqslant C_{\alpha,\beta,\gamma,K}(1+|k|+|\xi|)^{-|\alpha|-|\gamma|+\delta|\beta|},
\end{equation*} hold true uniformly in $\varepsilon\in (0,1),$ $x\in K$ for all  $\xi\in \mathbb{R}^n,$
where $\Delta_k$ stands for the standard difference operator on $\mathbb{Z}.$ Since the Calder\'on-Vaillancourt estimates the $L^2$-boundendess of $F$ when it is microlocalized to $\tilde{U},$ in terms of the constants $C_{\alpha,\beta,\gamma, K}$ and of $\tilde{C}_2,$ (see Remark \ref{remark:CVTh}) that is, for any $u\in C^\infty_0(\mathbb{T}_{T,\varepsilon}\times \tilde{U}),$
\begin{equation}\label{Estimate:Calderon:Vaillancourt}
    \Vert  F(x,t,D,\partial_t) u\Vert_{L^2}\leq \left(\sup_{|\alpha|+|\beta|+|\gamma|\leq  [\frac{n+1}{2}]+1 }{ \{C_{\alpha,\beta,\gamma,K}, \tilde{C_2}\}  }\right)\Vert u\Vert_{L^2}.
\end{equation}Since $M$ is a compact manifold, the estimate in \eqref{Estimate:Calderon:Vaillancourt} can be extended from local coordinate systems of $M$ to the whole manifold. We have proved the following lemma.
\begin{lemma}\label{Finite:Constant:CV} Let $0<\varepsilon<1,$ and let us consider the operator norm 
$$ C_\varepsilon=\Vert (-\partial_t^2-\Delta_g)E(t,x,D,\partial_t)^{-1} \Vert_{\mathscr{B}(L^2(M\times \mathbb{T}_{T,\varepsilon}))}.  $$
Then
\begin{equation}
    C:=\sup_{0<\varepsilon<1}C_\varepsilon <\infty.
\end{equation}Moreover, there is a finite family of compact subsets $K_1,K_2,\cdots, K_\ell$ covering $M,$ and such that
    \begin{equation}
     C\lesssim    \left(\max_{1\leq i\leq \ell}\sup_{|\alpha|+|\beta|+|\gamma|\leq  [\frac{n+1}{2}]+1 }{ \{C_{\alpha,\beta,\gamma,K_i}, \tilde{C_2}\}  }\right).
    \end{equation}
\end{lemma}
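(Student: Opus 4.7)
The goal is to show that the operator norm $C_\varepsilon$ stays bounded as $\varepsilon$ ranges over $(0,1)$, so the crucial point is not just boundedness (which follows abstractly from Calder\'on--Vaillancourt once the symbol class is identified) but \emph{uniformity in} $\varepsilon$. The plan has three stages: locate the operator in a H\"ormander class on $M\times\mathbb{T}_{T,\varepsilon}$, extract explicit symbol seminorm estimates uniform in $\varepsilon$, and then conclude by the microlocalised Calder\'on--Vaillancourt theorem and a partition of unity on $M$.

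First, I note that $-\partial_t^2-\Delta_g\in \Psi^2_{1,0}(M\times \mathbb{T}_{T,\varepsilon})$ and, by Proposition \ref{The:constant:B} together with the parameter-ellipticity analysis preceding it, $E(t,x,D,\partial_t)^{-1}\in \Psi^{-2}_{1,\delta}(M\times \mathbb{T}_{T,\varepsilon})$, so the composition $F(x,t,D,\partial_t)$ lies in $\Psi^0_{1,\delta}(M\times \mathbb{T}_{T,\varepsilon})$. Fix a finite atlas $\{K_i\}_{i=1}^\ell$ of compact coordinate patches covering $M$; in each such chart we view $M$ locally as an open set of $\mathbb{R}^n$, while the $t$-direction is handled by the Ruzhansky--Turunen toroidal calculus recalled in Subsection \ref{periodicclasses}, with dual variable $k\in\mathbb{Z}$ rescaled by $\pi/(T+\varepsilon)$.

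The second stage, which is the heart of the argument, is a direct estimate of the local full symbol $\sigma(t,x,k,\xi)$ and all its derivatives, uniformly in $\varepsilon\in(0,1)$. Introduce the auxiliary continuous parameter $\tau:=\pi k/(T+\varepsilon)$ so that, modulo lower order terms coming from the pseudo-differential composition, the principal symbol reads
\begin{equation*}
\sigma_0(t,x,k,\xi)=\frac{\tau^2+4\pi^2|\xi|^2}{\tau^2+E(x,\xi)^{2/\nu}}.
\end{equation*}
The ellipticity bound $C_1|\xi|^2\leq E(x,\xi)^{2/\nu}\leq C_2|\xi|^2$ for $|\xi|\geq 1$, together with $E(x,\xi)\geq c$ everywhere, yields $\tilde C_1\leq |\sigma_0|\leq \tilde C_2$ with constants independent of $\varepsilon$. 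For the derivatives, I would differentiate the quotient directly: each $\partial_{\xi}^\alpha$ or $\partial_{x,t}^\beta$ applied to either numerator or denominator produces factors of size $\lesssim (1+|\tau|+|\xi|)^{2-\rho|\alpha|+\delta|\beta|}$, and the denominator to a power $(\tau^2+E(x,\xi)^{2/\nu})^{-N}$ contributes a factor $\lesssim (1+|\tau|+|\xi|)^{-2N}$ (by ellipticity, uniformly in $x$ and in $\varepsilon$). The discrete differences $\Delta_k^\gamma$ translate to divided differences in $\tau$ with spacing $\pi/(T+\varepsilon)\leq \pi/T$, so they are dominated by the corresponding continuous $\partial_\tau^\gamma$ uniformly in $\varepsilon$. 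Counting powers yields the symbol inequalities
\begin{equation*}
|\partial_{x,t}^\beta\partial_\xi^\alpha\Delta_k^\gamma \sigma_0(t,x,k,\xi)|\leq C_{\alpha,\beta,\gamma,K_i}(1+|k|+|\xi|)^{-|\alpha|-|\gamma|+\delta|\beta|},
\end{equation*}
with $C_{\alpha,\beta,\gamma,K_i}$ independent of $\varepsilon\in(0,1)$. The same kind of bookkeeping applies to the lower-order terms of $\sigma$ produced by the composition formula.

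The third stage is to invoke the microlocalised Calder\'on--Vaillancourt estimate (Remark \ref{remark:CVTh} combined with the toroidal version \eqref{Delgado:Ruzhansky:lp} in Theorem \ref{eq}) in each chart $K_i$, giving
\begin{equation*}
\Vert F(x,t,D,\partial_t)u\Vert_{L^2}\leq \Big(\max_{1\leq i\leq \ell}\sup_{|\alpha|+|\beta|+|\gamma|\leq [\tfrac{n+1}{2}]+1}\{C_{\alpha,\beta,\gamma,K_i},\tilde C_2\}\Big)\Vert u\Vert_{L^2}
\end{equation*}
for $u$ supported in the chart; a smooth partition of unity subordinate to $\{K_i\}$ then globalises the estimate to $M\times \mathbb{T}_{T,\varepsilon}$, and the right-hand side is finite and independent of $\varepsilon$. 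Taking the supremum over $\varepsilon\in(0,1)$ yields the claim.

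The main obstacle, and what distinguishes this from a routine application of Calder\'on--Vaillancourt, is the uniformity in $\varepsilon$: one has to verify that neither the denominator $\tau^2+E(x,\xi)^{2/\nu}$ degenerates nor the difference operators $\Delta_k$ acquire $\varepsilon$-dependent blow-up. Both points rely crucially on the positivity $E(x,D)\geq cI$ (which prevents degeneration at $\xi=0$) and on the fact that $T+\varepsilon$ is bounded away from zero, so the rescaling $k\mapsto \pi k/(T+\varepsilon)$ is comparable to a fixed rescaling uniformly in $\varepsilon\in(0,1)$.
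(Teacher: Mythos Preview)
Your proposal is correct and follows essentially the same approach as the paper: identify $F(x,t,D,\partial_t)\in\Psi^0_{1,\delta}(M\times\mathbb{T}_{T,\varepsilon})$, compute its principal symbol explicitly as the quotient $\sigma_0$, verify $\varepsilon$-uniform symbol bounds using ellipticity and the positivity $E(x,\xi)\geq c$, and then globalise the local Calder\'on--Vaillancourt estimate via a partition of unity subordinate to a finite cover $\{K_i\}$ of $M$. Your additional remarks on the auxiliary variable $\tau=\pi k/(T+\varepsilon)$ and on why $\Delta_k^\gamma$ is uniformly controlled (since $T+\varepsilon\geq T$ is bounded below) make explicit exactly the points the paper leaves implicit.
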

\begin{proof}
    Let $U_{i}$ be an atlas on $M$ in such a way that $M$ can be covered by a finite family of compact subsets $K_i\subset U_{i}.$ Using a partition of the unity $\phi_i$ subordinated to the  $U_i,$'s with $\textnormal{supp}(\psi_i)\subset K_i,$ we can 
    microlocalise the operator $F(x,t,D,\partial_t)$ on any  $K_{i}.$ Using \eqref{Estimate:Calderon:Vaillancourt}, we have that
    $$ 
        C_\varepsilon=\Vert (-\partial_t^2-\Delta_g)E(t,x,D,\partial_t)^{-1} \Vert_{\mathscr{B}(L^2(M\times \mathbb{T}_{T,\varepsilon}))}\lesssim \sum_{i=1}^{\ell} \left(\sup_{|\alpha|+|\beta|+|\gamma|\leq  [\frac{n+1}{2}]+1 }{ \{C_{\alpha,\beta,\gamma,K_i}, \tilde{C_2}\}  }\right)
   $$
   $$  \lesssim \ell \left(\max_{1\leq i\leq \ell}\sup_{|\alpha|+|\beta|+|\gamma|\leq  [\frac{n+1}{2}]+1 }{ \{C_{\alpha,\beta,\gamma,K_i}, \tilde{C_2}\}  }\right). $$ Thus, the proof of Lemma \ref{Finite:Constant:CV} is complete.
\end{proof}
\section{Proof of the Donnelly-Fefferman type inequality for pseudo-differential operators}\label{Donnelly:Fefferman:Inequality}
\begin{proof}[Proof of Theorem \ref{Donnelly-Fefferman}] For the proof of \eqref{Donnelly-Fefferman} we can use \eqref{Spectral:Inequality:Intro} and the Sobolev embedding theorem.   Indeed, let $R>0$ and let us consider $s\in \mathbb{R}$ such that $s>n/2.$ With $\omega=B(x,R)$ a ball  of radius $R>0$ we have that
\begin{equation}\label{Auxiliar:proof:1}
    \Vert \varkappa \Vert_{L^\infty(B(x,2R))}\leq  \Vert \varkappa \Vert_{L^\infty(G)}.
\end{equation} Now, the Sobolev embedding theorem and the inequality in  \eqref{Spectral:Inequality:Intro} imply that
\begin{align*}
    \Vert \varkappa\Vert_{L^\infty(G)}\lesssim \Vert (1+E(x,D))^{\frac{s}{\nu}}\varkappa \Vert_{L^2(G)}\lesssim (1+\lambda)^{s}\Vert \varkappa\Vert_{L^2(G)}
\end{align*}
$$   \lesssim(1+\lambda)^{s}C_{1,R}e^{C_{2,R}\lambda}\Vert \varkappa\Vert_{L^2(B(x,R))}.$$
By using \eqref{Auxiliar:proof:1} we conclude this analysis with the inequality
\begin{align*}
    \Vert \varkappa\Vert_{L^\infty(B(x,2R))}\lesssim  \Vert \varkappa\Vert_{L^\infty(G)}\leq e^{C_{2,R}'+C_{1,R}' \lambda}\Vert \varkappa\Vert_{L^2(B(x,2R))}\leq e^{C_{2,R}'+C_{1,R}' \lambda}\Vert \varkappa\Vert_{L^\infty(B(x,2R))},
\end{align*} for some $C_{1,R}'>C_{1,R}$ and  $C_{2,R}'>C_{2,R}.$  The proof of  Theorem \ref{Main:theorem} is complete.    
\end{proof}

{\bf{Acknowledgments.}} I would like to thank Enrique Zuazua and Umberto Biccari for discussions about an earlier version of this work.

\bibliographystyle{amsplain}

\end{document}